\tikzset{anchorbase/.style={baseline={([yshift=-0.5ex]current bounding box.center)}}}
\tikzstyle directed=[postaction={decorate,decoration={markings,
    mark=at position #1 with {\arrow{>}}}}]
\tikzstyle rdirected=[postaction={decorate,decoration={markings,
    mark=at position #1 with {\arrow{<}}}}]
 \newlength{\baseunit}               % the basic unit length
\newtheorem{theorem}[subsubsection]{Theorem}
\newtheorem{lemma}[theorem]{Lemma}
\newtheorem{prop}[theorem]{Proposition}
\newtheorem{corollary}[subsubsection]{Corollary}
\theoremstyle{definition}
\newtheorem{definition}[subsubsection]{Definition}
\newtheorem{remark}[theorem]{Remark}
\newtheorem{example}[theorem]{Example}
\newtheorem{condition}[theorem]{Condition}
\newtheorem*{theorem*}{Theorem}
\newcommand{\cI}{\mathcal{I}}
\newcommand{\tto}{\twoheadrightarrow}
\newcommand{\unit}{{\mathbbm{1}}}
\newcommand{\Ver}{\mathrm{Ver}}
\newcommand{\Ve}{\mathrm{Vec}}
\newcommand{\mN}{\mathbb{N}}
\newcommand{\mZ}{\mathbb{Z}}
\newcommand{\mF}{\mathbb{F}}
\newcommand{\mT}{\mathbb{T}}
\newcommand{\End}{\mathrm{End}}
\newcommand{\Hom}{\mathrm{Hom}}
\newcommand{\Ext}{\mathrm{Ext}}
\newcommand{\Ker}{\mathrm{Ker}}
\newcommand{\id}{\mathrm{id}}
\newcommand{\ev}{\mathrm{ev}}
\newcommand{\co}{\mathrm{co}}
\newcommand{\cA}{\mathcal{A}}
\newcommand{\cB}{\mathcal{B}}
\newcommand{\cC}{\mathcal{C}}
\newcommand{\cD}{\mathcal{D}}
\newcommand{\cV}{\mathcal{V}}
\newcommand{\Fr}{\mathsf{Fr}}
\newcommand{\Fun}{\mathrm{Fun}}
\newcommand{\Rep}{\mathrm{Rep}}
\newcommand{\ex}{\mathit{ex}} % exact
\newcommand{\faith}{\mathit{faith}} % faithful
\newcommand{\Inna}[1]{\begin{framed} {\tt{\color{red} {\bf Inna:} {#1}}} \end{framed}}
\newcommand{\InnaB}[1]{{#1}}
\begin{document}
\title{Monoidal abelian envelopes and a conjecture of Benson - Etingof}
\author{Kevin Coulembier}
\address{K. C.: School of Mathematics and Statistics, University of Sydney, NSW 2006, Australia}
\email{kevin.coulembier@sydney.edu.au}

\author{Inna Entova-Aizenbud}
\address{I. E.: Department of Mathematics, Ben Gurion University, Beer-Sheva, Israel}
\email{inna.entova@gmail.com}

\author{Thorsten Heidersdorf}
\address{T. H.: Mathematisches Institut, Universit\"at Bonn, Germany}
\email{heidersdorf.thorsten@gmail.com} 

%\thanks{.}

%\bkeywords{universal monoidal categories, rigid monoidal categories}

\begin{abstract} We give several criteria to decide whether a given tensor category is the abelian envelope of a fixed symmetric monoidal category. {As a main result we prove that the category of finite-dimensional representations of a semisimple simply connected algebraic group is the abelian envelope of the category of tilting modules.} Benson and Etingof conjectured that a certain limit of finite symmetric tensor categories is tensor equivalent to the finite dimensional representations of $SL_2$ in characteristic $2$. We use our results on the abelian envelopes to prove this conjecture and its variants for any prime $p$.
\end{abstract}

\maketitle

%\tableofcontents
\section{Introduction}

\subsection{Monoidal abelian envelopes}

If a given category $\cD$ is not abelian, a natural question is whether $\cD$ admits an embedding into an abelian category $\cV$ in a minimal or universal way. While there are some general results about the existence of such {\it abelian envelopes} under some assumptions on $\cD$ (e.g. $\cD$ exact), little seems to be known if we require that $\cD$ and $\cV$ are monoidal.

Assume now that $I:\cD\to\cV$ is a fully faithful symmetric monoidal functor from an additive Karoubian
$k$-linear rigid symmetric monoidal category $\cD$ to a tensor category $\cV$ over a field $k$. A pair $(\cV, I: \cD \to \cV)$ is the {\it abelian envelope} of $\cD$ if the functor $I$ induces for any \InnaB{tensor} category $\cA$ an equivalence of the following categories 

\begin{itemize}
\item $\Fun^\ex(\cV,\cA)$, the category of exact symmetric monoidal
$k$-linear 
functors $\cV\to\cA$,
\item $\Fun^\faith(\cD,\cA)$, the category of faithful symmetric monoidal $k$-linear functors $\cD\to\cA$. 
\end{itemize} 

The existence of such monoidal abelian envelopes is non-trivial: in 
\cite{Deligne-interpolation}, an example was given of a category $\cD$ for which 
there exists no monoidal abelian envelope. Explicit realisations of monoidal abelian 
envelopes have so far only been obtained in some examples, see \cite{BE, BEO,
Comes-Ostrik-abelian, EAHS-Deligne}. Existence criteria and formal descriptions of monoidal abelian envelopes for large
classes of categories have been developed in \cite{BEO, CM}.

We give a convenient criterion in Proposition \ref{prop:equiv_abelian} to decide whether a given tensor category is the abelian envelope of a fixed embedded monoidal subcategory.

{We use these criteria to prove in Theorem} \ref{thm:tilt_ab_env} that the category of tilting modules $\cD = Tilt(G)$, where $G$ is a semisimple simply connected algebraic group over a field $k$ of characteristic $p >0$, admits as abelian envelope the category of finite dimensional algebraic representations $\Rep(G)$. {As an application we prove a conjecture of Benson and Etingof in the second part of our paper.}

\subsection{A conjecture of Benson and Etingof} 

A celebrated theorem of Deligne \cite{Del02} asserts that any tensor category of subexponential growth over an algebraically closed field of characteristic $0$ admits a super fibre functor.

It is easy to come up with counterexamples in positive characteristic $p$: taking the quotient of the category of finite dimensional representations of the cyclic group $\mathbb{Z}/p\mathbb{Z}$ by the tensor ideal of negligible morphisms defines the Verlinde category $\Ver_p$. This is a tensor category of subexponential growth which does not admit any super fibre functor for $p> 3$. In \cite{Ostrik}, Ostrik conjectured that any tensor category of subexponential growth admits an analogue of a fibre functor into $\Ver_p$ and proved it for symmetric fusion categories.

{For any $p$} there are counterexamples to Ostrik's conjecture. In fact, Benson and Etingof \cite{BE} \cite{BEO} showed that there exists an infinite ascending chain of finite symmetric tensor categories over $k$ \[ \Ve_k = \Ver_{p^0} \subset \Ver_{p^1} \subset \Ver_{p^2} \subset \cdots \] with fully faithful symmetric tensor embeddings such that $\Ver_{p^n}$ does not admit a tensor functor to a finite tensor category of smaller Frobenius-Perron dimension. Each of these categories $\Ver_{p^n}$ is the abelian envelope of the quotient of $Tilt(SL_2)$ by the tensor ideal of morphisms which factor through an object in the thick ideal generated by the $n$-th Steinberg representation.

A new {question asked in \cite{BE} \cite{BEO} is whether} any finite tensor category admits a fibre functor to $\Ver_{p^{\infty}} = \bigcup_{n \geq 0}\Ver_{p^n}$. 

Another interesting limit of the categories $\Ver_{p^n}$ which has been suggested in \cite{BE} is the following.
Instead of the naive big limit one can consider - mimicking a construction of Entova-Aizenbud, Hinich and Serganova in \cite{EAHS-Deligne} -  a refined limit: one considers a filtration of $\Ver_{p^{n}}$ by abelian subcategories ${\Ver_{p^{n}}^r}$, $r \geq 0$ such that one expects there to be a partial tensor functor $\Ver_{p^{n}}^r \to {\Ver_{p^{n-1}}^r}$ which becomes an equivalence for $n >> r$. If so, one can define $$\mathcal{C}^r =  \varprojlim_{n\to \infty} {\Ver_{p^{n}}^r}.$$ With the obvious embeddings $$\mathcal{C}^r \hookrightarrow \mathcal{C}^{r+1},\qquad\mbox{from $\Ver_{p^{n}}^r\hookrightarrow \Ver_{p^n}^{r+1}$ for $n>>r$}, $$ one can define the colimit $$\mathcal{C}:=\varinjlim_{r \to\infty} \mathcal{C}^r.$$ Then $\mathcal{C}$ is a tensor category with a distinguished object $\bar{L}_1$.

Our main result {in this part} is the following theorem, conjectured for $p=2$ in \cite[Remark 3.14]{BE}.

\begin{theorem} (Lemma \ref{lem:tilt_ff}, Theorem \ref{thm:BE_conj}) There 
exists a fully faithful symmetric monoidal functor $I':Tilt(SL_2) \to 
\mathcal{C}$ sending the standard $2$-dimensional $SL_2$ representation $V$ to 
$\bar{L}_1$. It factors through the {obvious} functor $I:Tilt(SL_2) \to 
\Rep(SL_2)$ and induces an equivalence of symmetric tensor categories 
$$\Rep(SL_2) \stackrel{\sim}{\to} \mathcal{C}.$$
\end{theorem}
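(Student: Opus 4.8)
The plan is to identify $\mathcal{C}$, together with the functor $I'$, as \emph{an} abelian envelope of $Tilt(SL_2)$. Since Theorem \ref{thm:tilt_ab_env} already identifies that envelope with $\Rep(SL_2)$, and an abelian envelope is unique up to equivalence once it exists, this immediately yields the asserted equivalence $\Rep(SL_2)\stackrel{\sim}{\to}\mathcal{C}$. Concretely, I would first construct $I'$ and prove it is faithful; then the defining universal property supplied by Theorem \ref{thm:tilt_ab_env} (the equivalence $\Fun^{\ex}(\Rep(SL_2),\mathcal{C})\simeq\Fun^{\faith}(Tilt(SL_2),\mathcal{C})$) produces an exact symmetric monoidal functor $F\colon\Rep(SL_2)\to\mathcal{C}$ with $F\circ I\cong I'$, which is exactly the claimed factorisation; it then remains to show that $F$ is an equivalence, and I would do this by checking the criterion of Proposition \ref{prop:equiv_abelian} for the pair $(\mathcal{C},I')$.

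For the construction of $I'$ (the content of Lemma \ref{lem:tilt_ff}) I would use that each $\Ver_{p^{n}}$ is the abelian envelope of the quotient of $Tilt(SL_2)$ by the ideal generated by the $n$-th Steinberg object; this gives canonical symmetric monoidal functors $Tilt(SL_2)\to\Ver_{p^{n}}$ which, once $r$ is large relative to the weights occurring, land in the filtered subcategory $\Ver_{p^{n}}^{r}$ and are compatible with the connecting partial functors $\Ver_{p^{n}}^{r}\to\Ver_{p^{n-1}}^{r}$. Passing to the limit in $n$ and then the colimit in $r$ yields $I'\colon Tilt(SL_2)\to\mathcal{C}$, with $I'(V)=\bar{L}_1$ by construction (this pins down $I'$ since $V$ tensor-generates $Tilt(SL_2)$). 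Faithfulness --- in fact full faithfulness --- would follow from a stabilisation statement: for fixed tilting modules $T,T'$ the ideal generated by the $n$-th Steinberg object meets $\Hom(T,T')$ trivially once $p^{n}$ exceeds the weights appearing in $T\otimes(T')^{*}$, and the connecting functors restrict to equivalences on the relevant pieces for $n\gg r$, so $\Hom_{Tilt}(T,T')\to\Hom_{\mathcal{C}}(I'T,I'T')$ is an isomorphism in the limit.

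To run the criterion of Proposition \ref{prop:equiv_abelian} I would verify that (i) $I'$ is fully faithful --- just established; (ii) every object of $\mathcal{C}$ is a subquotient of $I'(T)$ for some tilting module $T$ --- this holds inside each $\Ver_{p^{n}}$ because it is the abelian envelope of a quotient of $Tilt(SL_2)$, hence every simple object of $\Ver_{p^{n}}^{r}$ is a subquotient of the image of a tilting module, and this generation property is inherited by $\mathcal{C}^{r}$ and by the colimit $\mathcal{C}$; and (iii) the remaining hypothesis of Proposition \ref{prop:equiv_abelian}, of a homological/exactness nature (the condition guaranteeing that a faithful monoidal functor out of $Tilt(SL_2)$ induces an exact functor out of the candidate envelope), which I would check is detected on $Tilt(SL_2)$ and is therefore transported from the corresponding property of $\Rep(SL_2)$ furnished by Theorem \ref{thm:tilt_ab_env}. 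Granting (i)--(iii), Proposition \ref{prop:equiv_abelian} identifies $(\mathcal{C},I')$ with the abelian envelope of $Tilt(SL_2)$, so $F$ is an equivalence and sends $V$ to $\bar{L}_1$.

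I expect the main obstacle to be the rigorous handling of the double limit defining $\mathcal{C}$: constructing the partial tensor functors $\Ver_{p^{n}}^{r}\to\Ver_{p^{n-1}}^{r}$, proving they become equivalences for $n\gg r$, and checking that $\mathcal{C}=\varinjlim_{r}\varprojlim_{n}\Ver_{p^{n}}^{r}$ is again an abelian rigid symmetric monoidal category of the kind to which Proposition \ref{prop:equiv_abelian} applies --- together with the explicit stabilisation bounds needed to see that full faithfulness and the subquotient-generation property survive passage to the limit. Once this analysis is in place, the identification with $\Rep(SL_2)$ is a formal consequence of the universal property of the abelian envelope.
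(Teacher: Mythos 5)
Your construction of $I'$ and the argument for its full faithfulness (full faithfulness at each finite tensor level plus the fact that the ideals $\mathcal{I}_n$ have trivial intersection, so that $\Hom_{Tilt}(T,T')\to\Hom_{\Ver_{p^n}}(F_nT,F_nT')$ becomes an isomorphism for $n\gg 0$) is essentially the paper's proof of Lemma~\ref{lem:tilt_ff}, and so is the first step of Theorem~\ref{thm:BE_conj}: feed $I'$ into the universal property of the abelian envelope $\Rep(SL_2)$ supplied by Theorems~\ref{thm:tilt_ab_env} and~\ref{thm:uni-2} to obtain $\Phi\colon\Rep(SL_2)\to\mathcal{C}$.

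Where you genuinely diverge is in proving that $\Phi$ is an equivalence, and here there is a gap. You propose to certify $(\mathcal{C},I')$ as an abelian envelope of $Tilt(SL_2)$ directly via Proposition~\ref{prop:equiv_abelian} and conclude by uniqueness. But the hypotheses you list are not the ones that proposition requires. Your item (ii), that every object of $\mathcal{C}$ be a subquotient of some $I'(T)$, is not Condition~\ref{cond:new_req_simple}; that condition asks that for \emph{every} simple $X\in\mathcal{C}$ there exist a tilting $T$ with $X\otimes I'(T)\in I'(Tilt(SL_2))$. Verifying this before you know that $\Phi$ hits every simple is exactly the hard part: you would have to identify the simples of $\mathcal{C}$ with the $\bar L_a$ and show that each $\bar L_a\otimes I'(St_r)$ is in the image of $I'$, which amounts to running the same Steinberg-tensoring argument that Theorem~\ref{thm:tilt_ab_env} uses inside $\Rep(SL_2)$ but now inside the double-limit category. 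Your item (iii) misreads the remaining hypothesis: it is $\Ext^1_{\mathcal{C}}(I'T_1,I'T_2)=0$, an assertion about the category $\mathcal{C}$ itself; it is not ``detected on $Tilt(SL_2)$'' and it cannot be ``transported from $\Rep(SL_2)$'' until you already know $\Phi$ is an equivalence, so as stated the argument is circular. Establishing it directly would require controlling $\Ext^1$ across the colimit $\mathcal{C}=\varinjlim_r\mathcal{C}^r$, where the subcategories $\mathcal{C}^r$ are only topologising (not Serre), so an extension between objects of $\mathcal{C}^r$ may live only in some larger $\mathcal{C}^{r'}$.

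The paper sidesteps all of this. It never re-verifies Condition~\ref{itms:req_subcat_D} for $(\mathcal{C},I')$. Instead it records in Theorem~\ref{thm:uni-2} the extra fact that the envelope equivalence takes \emph{full} functors to \emph{full} functors, so full faithfulness of $I'$ immediately yields that $\Phi$ is full (and it is automatically faithful and exact). Essential surjectivity is then reduced, via Proposition~\ref{prop:func_ess_surj}, to the statement that $\Phi$ carries simples to simples, which is established concretely: $\Phi(L_i)\cong\bar L_i$ for $i\le p-1$ by the Clebsch--Gordan-type fusion rules $\mathbb{T}^{[n]}_i\otimes\mathbb{T}^{[n]}_1\cong\mathbb{T}^{[n]}_{i+1}\oplus\mathbb{T}^{[n]}_{i-1}$, and then compatibility of $\Phi$ with the Frobenius functor (Lemma~\ref{LemFrL}) together with Steinberg's tensor product theorem gives $\Phi(L_a)\cong\bar L_a$ for all $a$. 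If you want to salvage your route, you would need to supply precisely this identification of simples and a genuine $\Ext^1$-vanishing argument in $\mathcal{C}$, at which point you have done at least as much work as the paper's direct proof.
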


\subsection{Structure of the article} Section \ref{sec:prel} contains some preliminary definitions. Section \ref{sec:env} discusses the notion of a monoidal abelian envelope and gives certain criteria to check whether a tensor category $\cV$ is the abelian envelope of a given monoidal category. Section \ref{sec:conjecture} discusses the construction of the limit $\mathcal{C}$ and proves the equivalence $\mathcal{C} \cong \Rep(SL_2)$.

\section{Preliminaries} \label{sec:prel}

\subsection{SM categories} Let $k$ be any field. We adopt the same notion of a $k$-linear symmetric monoidal (SM) category as \cite{Deligne-Milne}. In particular in a such a category $\cC$, the functor $-\otimes -$ is $k$-linear in both variables. Furthermore we have a binatural family of braiding morphisms $\gamma_{XY}:X\otimes Y\stackrel{\sim}{\to} Y\otimes X$ which satisfy the constraints of \cite{Deligne-Milne}. For an object $X\in \cC$, a dual $X^*$ is an object equipped with morphisms $\co_X:\unit\to X\otimes X^*$ and $\ev_X:X^*\otimes X\to\unit$ satisfying the relations in \cite[(0.1.4)]{Del02}. If every object has a dual, we call $\cC$ rigid.

\subsection{SM functors} A $k$-linear functor $F:\cC\to \cC'$ between two SM categories $\cC$ and $\cC'$ is symmetric monoidal if it is equipped with natural isomorphisms $c_{XY}^F:F(X)\otimes F(Y)\stackrel{\sim}{\to}F(X\otimes Y)$ and $\unit\stackrel{\sim}{\to}F(\unit)$ satisfying the usual  compatibility conditions \cite[Definition 1.8]{Deligne-Milne}. In particular we have a commutative diagram
$$\xymatrix{
F(X)\otimes F(Y)\ar[rr]^{\gamma_{F(X)F(Y)}}\ar[d]^{c_{XY}^F}&&F(Y)\otimes F(X)\ar[d]^{c_{YX}^F}\\
F(X\otimes Y)\ar[rr]^{F(\gamma_{XY})}&& F(Y\otimes X).}$$
If $F$ is a monoidal functor and $X^*$ the dual of $X$, then $F(X^\ast)$ is a dual of $F(X)$.

\subsection{Tensor ideals} A tensor ideal $\cI$ in a $k$-linear SM category $\cC$ consists of submodules $\cI(X,Y)\subset \Hom(X,Y)$ for all objects $X,Y$, such that $\cI$ is closed under arbitrary compositions or tensor products with morphisms in $\cC$.
For such a tensor ideal $\cI$, the $k$-linear category $\cC/\cI$ has the same objects as $\cC$ but morphism spaces given by $\Hom(X,Y)/\cI(X,Y)$. Furthermore, $\cC/\cI$ admits a unique SM structure so that the canonical functor $\cC\to\cC/\cI$ is an SM functor.

\subsection{Tensor categories}\label{DefTC} We say that a $k$-linear SM category $\cC$ is a tensor category if

\begin{enumerate} 
\item[(i)] $\cC$ is abelian;
\item [(ii)] the canonical morphism $k \to \End(\unit)$ is an isomorphism;
\item [(iii)] $\cC$ is rigid (every object in $\cC$ is dualisable);
\item [(iv)] every object in $\cC$ has finite length.
\end{enumerate}
Under these assumptions the functor $-\otimes -$ is bi-exact  and the unit object $\unit$ is simple, see for instance Sections 4.2-4.3 in \cite{EGNO}. Note that finite length implies that every morphism space is finite dimensional, \cite[Proposition 1.1]{Del02}. Our tensor categories are therefore locally finite in the sense of \cite[Definition 1.8.1]{EGNO} and the Jordan-H\"older Theorem and Krull-Schmidt Theorem hold. An SM functor between two tensor categories $\cC$ and $\cC'$ is called a tensor functor if it is exact. Such a functor is automatically faithful (see \cite[Proposition 1.19]{Deligne-Milne}).

We note that what we refer to as `tensor categories' are called `symmetric tensor categories' in \cite{BE, EGNO} and as `pre-tannakian categories' in \cite{Comes-Ostrik-abelian, Ostrik}. Also note that in the terminology of \cite{Kevin-tannakian, Del02, Deligne-Milne} tensor categories need not satisfy (iv) above.

%\subsection{Frobenius twist in characteristic 2} Suppose that $char(k) = 2$. In any tensor category $\cC$ over $k$, the Frobenius twist $\Fr(X)$ of an object $X \in \cC$ is the cohomology of the operator $d = 1 + \gamma_{XX}: X \otimes X \to X \otimes X$. This functor is additive and exact in the middle (as in Definition~\ref{Def:Emiddle}). For more background on Frobenius twists in arbitrary positive characteristic see \cite[Section 4]{Kevin-tannakian}. We just need that \cite[Remark 4.1.9]{Kevin-tannakian} the functor $\Fr:\cC\to\cC$ has a canonical structure of an additive SM functor without being $k$-linear. It should not be confused with usual Frobenius twist as in \cite{Jantzen}, but for $V\in Vec_k$ and with $\cC = \Rep_k(GL(V))$, the category of finite dimensional algebraic representations of $GL(V)$ over $k$, we have $\Fr(V) \cong V^{(1)}$ \cite[Example 4.1.5]{Kevin-tannakian}.

\section{Abelian envelope} \label{sec:env}
Fix an arbitrary field $k$.

\subsection{Definition of the abelian envelope}
\subsubsection{}\label{DefIDV}
In what follows $I:\cD\to\cV$ is a $k$-linear SM functor from an additive Karoubian
$k$-linear rigid SM category $\cD$ to a tensor category $\cV$ over $k$.

\begin{definition}\label{def:ab_env}
For a fixed $\cD$, a pair $(\cV, I: \cD \to \cV)$ as above is an {\it abelian envelope} of $\cD$ if for any $k$-linear \InnaB{tensor} category $\cA$ the functor
$$
-\circ I :\;\, \Fun^\ex(\cV,\cA)\;\to\; \Fun^\faith(\cD,\cA)
$$
is an equivalence of categories between
\begin{itemize}
\item $\Fun^\ex(\cV,\cA)$, the category of exact SM 
$k$-linear (tensor)
functors $\cV\to\cA$,
\item $\Fun^\faith(\cD,\cA)$, the category of faithful SM $k$-linear functors $\cD\to\cA$. 
\end{itemize} 
\end{definition}
It follows immediately from the definition that an abelian envelope, when it exists, is unique up to equivalence. Henceforth we thus speak of `the' abelian envelope.

For this section, we focus on the following assumption.
\begin{condition}\label{itms:req_subcat_D}
\mbox{}

\begin{enumerate}
\item\label{itms:req_subcat_D1} $I:\cD\to\cV$ is fully faithful.
\item\label{itms:req_subcat_D2} Any $X\in\cV$ can be presented as the image of a 
map $I(f)$
for some $f:P\to Q$ in $\cD$.
\item\label{itms:req_subcat_D3} For any epimorphism $X\to Y$ in $\cV$ there 
exists a nonzero $T\in\cD$
such that the epimorphism $X\otimes I(T)\to Y\otimes I(T)$ splits.
\end{enumerate}
\end{condition}

\begin{theorem}\label{thm:uni-2}
Under the assumptions of Condition \ref{itms:req_subcat_D}, the functor $I:\cD\to\cV$ yields an abelian envelope of $\cD$. Moreover, under the equivalence in Definition~\ref{def:ab_env}, full functors correspond to full functors.
\end{theorem}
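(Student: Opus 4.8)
The plan is to unpack the definition of the abelian envelope directly. Fix a $k$-linear tensor category $\cA$; we must show that precomposition with $I$ gives an equivalence $\Fun^\ex(\cV,\cA)\to\Fun^\faith(\cD,\cA)$. The key observation is that Condition~\ref{itms:req_subcat_D} lets us \emph{reconstruct} an exact monoidal functor out of $\cV$ from its restriction to $\cD$: since every $X\in\cV$ is a chosen image $X\cong \mathrm{im}(I(f))$ of some $f\colon P\to Q$ in $\cD$ by \eqref{itms:req_subcat_D2}, any candidate extension $\tilde F\colon\cV\to\cA$ of a faithful $G\colon\cD\to\cA$ is forced on objects to be $\tilde F(X)=\mathrm{im}(G(f))$, and similarly on morphisms. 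So the main work is to verify that this prescription is (a) well-defined independently of the presentation, (b) functorial, exact and monoidal, and that a faithful $G$ automatically extends (so the functor is essentially surjective), while uniqueness of the extension gives fully faithfulness on the functor categories.

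Concretely I would proceed in the following steps. First, establish a ``saturation'' lemma: using \eqref{itms:req_subcat_D3}, show that for any map $g\colon X\to Y$ in $\cV$ and any faithful monoidal $G\colon\cD\to\cA$, the functor $G$ ``detects'' enough to reconstruct $\ker$, $\mathrm{coker}$ and $\mathrm{im}$ of $g$; the point of tensoring with a nonzero $T\in\cD$ and splitting is that $G(T)$ is a nonzero rigid object of $\cA$, so $-\otimes G(T)$ is faithful and exact, and a split epi is preserved by any additive functor, hence $G(X)\otimes G(T)\to G(Y)\otimes G(T)$ is split epi, and faithfulness of $-\otimes G(T)$ forces $G$ itself to send epis in $\cV$ (that come from $\cD$) to epis. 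Second, define $\tilde F$ on objects via images of presentations and check well-definedness by comparing two presentations through a common refinement, using that morphisms in $\cV$ also lift (after tensoring) to $\cD$. Third, promote $\tilde F$ to a functor and check $\tilde F\circ I\cong I_{\cA}$-compatibly $\tilde F$ restricts to $G$. Fourth, verify exactness: every short exact sequence in $\cV$ can be built from presentations, and the splitting-after-$\otimes T$ device transports exactness through $G$; then rigidity of $\cA$ upgrades right-exactness to exactness. Fifth, transport the monoidal structure: the constraints $c^{\tilde F}$ are determined on presentations and the coherence diagrams reduce to those for $G$. Sixth, for fully faithfulness of $-\circ I$ between the functor categories, note a natural transformation of exact monoidal functors on $\cV$ is determined by its values on objects of the form $I(P)$ (again because every object is a subquotient built from such), giving both faithfulness and fullness on $2$-morphisms. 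Finally, for the ``moreover'': if $G\colon\cD\to\cA$ is full, then since every object and morphism of $\cV$ is assembled from images/lifts of things in $\cD$, every morphism $\tilde F(X)\to\tilde F(Y)$ in $\cA$ is hit — unwind this via the presentations and the fact that $\Hom_{\cA}(\tilde F(X),\tilde F(Y))$ is computed from $\Hom_{\cA}(G(P),G(Q))$'s by taking the appropriate subquotient, on which $G$ is surjective; conversely a full extension restricts to a full functor, which is immediate.

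The main obstacle I expect is Step~2 together with Step~4: showing that the object-level definition via images of presentations is genuinely independent of the chosen presentation, and that the resulting functor is exact and not merely right exact. The subtlety is that two presentations $f\colon P\to Q$ and $f'\colon P'\to Q'$ with $\mathrm{im}(I(f))\cong\mathrm{im}(I(f'))$ need not be related by maps in $\cD$ on the nose — only after tensoring with some nonzero $T$, by \eqref{itms:req_subcat_D3} — so one has to carry the auxiliary object $T$ through the comparison and then cancel it using faithfulness of $-\otimes G(T)$ in $\cA$; making this canonical (so that it patches into a functor and respects composition) is the delicate bookkeeping. Everything monoidal and the ``full-to-full'' statement are then formal consequences, obtained by decorating the same presentations with the extra structure and invoking coherence.
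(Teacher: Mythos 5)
The paper does not reprove the main part of this statement from scratch: it cites \cite[Theorem~9.2.2]{EAHS-Deligne} for the existence of the equivalence $\Fun^\ex(\cV,\cA)\simeq\Fun^\faith(\cD,\cA)$, and only proves the ``moreover'' clause. Your plan for the main part is in the same spirit as the cited proof (define the extension on presentations, check well-definedness, functoriality, exactness, monoidality), but it is an outline, not a proof: you yourself flag the crucial steps --- independence of the chosen presentation and upgrading right-exactness to exactness --- as the ``main obstacle'' and ``delicate bookkeeping'' without resolving them. As it stands this is a genuine gap; the burden of proof for \eqref{itms:req_subcat_D3} giving exactness (not just right-exactness) is precisely what requires care.

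For the ``full $\Rightarrow$ full'' clause, your argument has a directionality error. Given presentations $I(P)\twoheadrightarrow X$ and $Y\hookrightarrow I(Q')$, the space $\Hom_{\cA}(\tilde F(X),\tilde F(Y))$ sits inside $\Hom_{\cA}(G(P),G(Q'))$ as a \emph{subspace} (precompose with an epi, postcompose with a mono), not a quotient. Surjectivity of $G$ on $\Hom_{\cA}(G(P),G(Q'))$ therefore does not transfer by ``taking a subquotient''; you would need to show that the linear conditions cutting out the subspace are the same on both sides, which requires exactly the faithfulness/exactness bookkeeping you have postponed. The paper's argument avoids this by first reducing, via rigidity, to showing that $F'\colon\Hom_{\cV}(\unit,M)\to\Hom_{\cA}(\unit,F'M)$ is an isomorphism; then choosing an embedding $i\colon M\hookrightarrow T$ with $T\in I(\cD)$ (available by \eqref{itms:req_subcat_D2}); and finally running a four-/five-lemma diagram chase on the left-exact rows
$$0\to\Hom(\unit,M)\to\Hom(\unit,T)\to\Hom(\unit,T/M),$$
using that all vertical maps are monomorphisms (faithfulness of $F'$) and the middle one is an isomorphism (fullness of $F$ on $I(\cD)$). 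This cleanly forces the leftmost vertical to be an epimorphism, hence an isomorphism. I would recommend you replace the subquotient argument with this reduction; the rigidity step is what makes the comparison of subspaces tractable.
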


\begin{proof} 
Except for the statement about the full functors, the theorem is proven in \cite[Theorem 9.2.2]{EAHS-Deligne}. Assume now that $F$ is a fully faithful SM $k$-linear functor $F: \cD\to\cA$ and let $F': \cV \to \cA$ be the induced tensor functor. Then $F'$ is a faithful exact symmetric monoidal functor. By rigidity (see also the proof of Proposition \ref{prop:Fr_full}) it is enough to check that for any $M \in \cV$, $$F': \Hom_{\cV}(\unit, M) \longrightarrow \Hom_{\cA}(\unit, F' M)$$ 
is an isomorphism.
 
It follows from condition \eqref{itms:req_subcat_D2} that there exists $T \in I(\cD)$ and an embedding $i:M \hookrightarrow T$. We then have the following commutative diagram with exact rows:
$$\xymatrix{&0 \ar[r]  &\Hom_{\cV}(\unit, M)  \ar[r]^-{i\circ -} \ar[d]^{F'} &\Hom_{\cV}(\unit, T)  \ar[r]  \ar[d]^{F'}  &\Hom_{\cV}(\unit, T/M)  \ar[d]^{F'} \\ &0 \ar[r]  &\Hom_{\cA}(\unit, F' M)  \ar[r]^-{i\circ -} &\Hom_{\cA}(\unit, F' T)  \ar[r] &\Hom_{\cA}(\unit, F' T / F' M).  }$$
All the vertical arrows are monomorphisms. By the assumption that $F=F'\circ I: \cD \to \cA$ is fully faithful, the vertical arrow $$F': \Hom_{\cV}(\unit, T) \longrightarrow \Hom_{\cA}(\unit, F' T)$$ is an isomorphism. Hence by the five-lemma,  $$F': \Hom_{\cV}(\unit, M) \longrightarrow \Hom_{\cA}(\unit, F' M)$$ is an epimorphism, and thus an isomorphism (as required).
 \end{proof}

%It follows from Theorem~\ref{thm:uni-2} that, for fixed $\cD$, a pair $(\cV,I)$ satisfying Condition~\ref{itms:req_subcat_D} is unique up to equivalence, when it exists.

\begin{lemma}\label{Lem:WIC}If a triple $(I,\cV,\cD)$ satisfies Condition~\ref{itms:req_subcat_D}, it also satisfies the ``weak ideal condition'', that is: 
For any object $X\in\cV$, there exists $T\in I(\cD)$ so that $X \otimes T \in I(\cD)$. 
\end{lemma}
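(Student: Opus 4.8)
The plan is to read $T$ off directly from Conditions \eqref{itms:req_subcat_D2} and \eqref{itms:req_subcat_D3}, using only that $\cD$ is Karoubian and $I$ fully faithful. First, given $X\in\cV$, Condition \eqref{itms:req_subcat_D2} provides a morphism $f\colon P\to Q$ in $\cD$ with $X$ isomorphic to the image of $I(f)$; factoring $I(f)$ in the abelian category $\cV$ then yields an epimorphism $I(P)\tto X$.

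Next I would apply Condition \eqref{itms:req_subcat_D3} to the epimorphism $I(P)\tto X$: there is a nonzero $T_1\in\cD$ for which $I(P)\otimes I(T_1)\tto X\otimes I(T_1)$ splits. Composing with the monoidal structure isomorphism $I(P)\otimes I(T_1)\xrightarrow{\sim} I(P\otimes T_1)$ exhibits $X\otimes I(T_1)$ as a direct summand of the object $I(P\otimes T_1)$, with $P\otimes T_1\in\cD$.

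Finally I would invoke the standard fact that the essential image of a fully faithful functor out of a Karoubian category is closed under direct summands: an idempotent $e\in\End_{\cV}(I(W))$ cutting out a summand $Z$ comes from an idempotent $\bar e\in\End_{\cD}(W)$ by full faithfulness, $\bar e$ splits in $\cD$, and $I$ (being additive) carries that splitting to $I(W)\cong I(W')\oplus I(W'')$, whence $Z\cong I(W')\in I(\cD)$. Applied to $Z=X\otimes I(T_1)$ and $W=P\otimes T_1$, this shows that $T:=I(T_1)\in I(\cD)$ satisfies $X\otimes T\in I(\cD)$; note $T\neq 0$ since $I$ is faithful and $T_1\neq 0$. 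I do not expect a genuine obstacle here: the whole argument is an unwinding of Condition \ref{itms:req_subcat_D}, and the only point deserving an explicit line of justification is this last closure-under-summands observation.
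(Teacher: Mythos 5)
Your proof is correct and is essentially the paper's argument: obtain an epimorphism $T'\tto X$ from $T'\in I(\cD)$ using Condition~\eqref{itms:req_subcat_D2}, tensor with a suitable $T\in I(\cD)$ to make it split by Condition~\eqref{itms:req_subcat_D3}, and conclude via Karoubianness that the resulting direct summand $X\otimes T$ of $T\otimes T'\in I(\cD)$ lies in $I(\cD)$. You merely make explicit two small steps the paper leaves tacit (factoring $I(f)$ through its image, and the closure of the essential image of a fully faithful functor from a Karoubian category under direct summands).
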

\begin{proof}
Let $X\in\cV$. By \ref{itms:req_subcat_D}\eqref{itms:req_subcat_D2}, there exists $T' \in I(\cD)$ with an epimorphism $q:T'\twoheadrightarrow X$. By \eqref{itms:req_subcat_D3}, there exists $T \in I(\cD)$ such that $$\id_{T} \otimes q: T\otimes T'\twoheadrightarrow T \otimes X$$ splits. Hence $T \otimes X$ is a direct summand of $ T\otimes T' \in I(\cD)$. Since $\cD$ is idempotent complete (Karoubian), we conclude that $T \otimes X \in I(\cD)$, as required.
\end{proof}

\begin{example} Let $k$ be algebraically closed of characteristic 0. Deligne \cite{Deligne-interpolation} constructed families of rigid SM categories $\Rep (S_t)$, $\Rep (GL_t)$, $\Rep (O_t)$ interpolating the finite dimensional representations of the symmetric, the general linear and the orthogonal groups. These categories are not abelian if $t \in \mathbb{N}$ (the $S_t$-case) or $t \in \mathbb{Z}$ (the $GL_t$ and $O_t$-case). Deligne conjectured that in each case there exists a universal tensor category admitting an embedding of the Deligne category, and suggested a construction of such a tensor category. In the $\Rep (S_t)$-case Comes-Ostrik \cite{Comes-Ostrik-abelian} constructed this tensor category as the heart of a suitable $t$-structure on the homotopy category of $\Rep (S_t)$ and proved that it is the abelian envelope  in the sense of Definition \ref{def:ab_env}. The existence of abelian envelopes was also proven in the $GL_t$-case \cite{EAHS-Deligne}, the $O_t$-case \cite{CM} and for the periplectic Deligne category \cite{EnSer}.
\end{example}

\begin{remark}
Since the appearance of the first version of the current paper, the result in \cite[Theorem 9.2.2]{EAHS-Deligne} has been generalised, by making Condition~\ref{itms:req_subcat_D}(3) superfluous, in \cite[Corollary~4.4.4]{PreTop}. It follows that also Theorem~\ref{thm:uni-2} remains valid without Condition~\ref{itms:req_subcat_D}(3).
\end{remark}

\subsection{Alternative criterion}
Consider again $I:\cD\to\cV$ as in \ref{DefIDV}

\begin{prop}\label{prop:equiv_abelian}
Assume that $I:\cD\to\cV$ is fully faithful (Condition \ref{itms:req_subcat_D}\eqref{itms:req_subcat_D1}) and that 
$$\forall T_1, T_2 \in I(\cD),\;\mbox{we have}\; \;\Ext^1_{\cV}(T_1, T_2) =0.$$
Then, Conditions \eqref{itms:req_subcat_D2}, \eqref{itms:req_subcat_D3} in \ref{itms:req_subcat_D} are equivalent to the following condition:

\begin{condition}\label{cond:new_req_simple}
For any simple object $X\in\cV$, there exists $T\in I(\cD)$ so that $X \otimes T \in I(\cD)$. 
\end{condition}

\end{prop}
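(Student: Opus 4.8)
The plan is to prove the two implications separately, noting along the way that the forward implication is essentially free while the converse is where the $\Ext^1$-hypothesis does all the work. For ``Conditions~\eqref{itms:req_subcat_D2} and~\eqref{itms:req_subcat_D3} $\Rightarrow$ Condition~\ref{cond:new_req_simple}'': since $I$ is fully faithful by hypothesis, the triple $(I,\cV,\cD)$ then satisfies all of Condition~\ref{itms:req_subcat_D}, so Lemma~\ref{Lem:WIC} applies and gives, for every $X\in\cV$ and in particular for every simple $X$, an object $T\in I(\cD)$ with $X\otimes T\in I(\cD)$; that is Condition~\ref{cond:new_req_simple} (the $\Ext^1$-hypothesis is not used here). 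So the content lies in the converse. Throughout the converse I will use freely that $I(\cD)$ is closed under $\otimes$, $\oplus$ and taking duals, since $I$ is a symmetric monoidal functor from an additive Karoubian rigid category into a tensor category, and that $T_1\otimes T_2\neq 0$ whenever $T_1,T_2\neq 0$ (tensor products of nonzero objects in a tensor category are nonzero).

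The first and main step of the converse is to bootstrap Condition~\ref{cond:new_req_simple} (about simple objects) up to the full weak ideal condition of Lemma~\ref{Lem:WIC} (about all objects of $\cV$); this is exactly where $\Ext^1_\cV(T_1,T_2)=0$ enters. I argue by induction on the length of $X\in\cV$, the length-one case being Condition~\ref{cond:new_req_simple}. For $\ell(X)\geq 2$ choose a short exact sequence $0\to X'\to X\to X''\to 0$ with $\ell(X'),\ell(X'')<\ell(X)$ (e.g.\ $X'$ a simple subobject), and pick nonzero $T',T''\in I(\cD)$ with $X'\otimes T'\in I(\cD)$ and $X''\otimes T''\in I(\cD)$. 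Put $T:=T'\otimes T''\in I(\cD)$, which is nonzero. Then $X'\otimes T\cong(X'\otimes T')\otimes T''$ and $X''\otimes T\cong(X''\otimes T'')\otimes T'$ (using the symmetry) both lie in $I(\cD)$, and tensoring the sequence with $T$ (bi-exactness of $\otimes$) gives a short exact sequence $0\to X'\otimes T\to X\otimes T\to X''\otimes T\to 0$ whose class lies in $\Ext^1_\cV(X''\otimes T,\,X'\otimes T)=0$. Hence it splits, so $X\otimes T\cong(X'\otimes T)\oplus(X''\otimes T)\in I(\cD)$, completing the induction.

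It then remains to deduce~\eqref{itms:req_subcat_D2} and~\eqref{itms:req_subcat_D3} from the weak ideal condition, which is formal rigidity. For~\eqref{itms:req_subcat_D2}: given $X$, pick a nonzero $T\in I(\cD)$ with $X\otimes T\in I(\cD)$; then $X\otimes T\otimes T^*$ lies in $I(\cD)$, and so does $X\otimes T^*\otimes T$ by the symmetry. Since $\unit$ is simple and $\co_T,\ev_T$ are nonzero ($T\neq 0$), the map $\id_X\otimes\co_T\colon X\hookrightarrow X\otimes T\otimes T^*$ is a monomorphism and $\id_X\otimes\ev_T\colon X\otimes T^*\otimes T\twoheadrightarrow X$ is an epimorphism (using exactness of $X\otimes-$). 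Composing these produces a morphism between two objects of $I(\cD)$ whose image is $X$; by full faithfulness it is $I(f)$ for some $f$ in $\cD$, which is~\eqref{itms:req_subcat_D2}. For~\eqref{itms:req_subcat_D3}: given an epimorphism $p\colon X\twoheadrightarrow Y$ with kernel $K$, pick nonzero $T_1,T_2\in I(\cD)$ with $K\otimes T_1\in I(\cD)$ and $Y\otimes T_2\in I(\cD)$, and set $S:=T_1\otimes T_2\in I(\cD)$, which is nonzero. Then $K\otimes S$ and $Y\otimes S$ both lie in $I(\cD)$ (again up to the symmetry), so tensoring $0\to K\to X\to Y\to 0$ with $S$ gives an extension with class in $\Ext^1_\cV(Y\otimes S,\,K\otimes S)=0$; hence it splits, i.e.\ $p\otimes\id_S$ splits, which is~\eqref{itms:req_subcat_D3}.

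The step I expect to be the real obstacle is the inductive one establishing the weak ideal condition from Condition~\ref{cond:new_req_simple}: one must be confident that $I(\cD)$ is genuinely closed under the operations invoked ($\otimes$, $\oplus$, duals), and — a small but essential point — that the auxiliary objects $T$ can always be taken nonzero, since~\eqref{itms:req_subcat_D3} demands a nonzero $T$ and $\ev_T,\co_T$ are monomorphisms/epimorphisms only for $T\neq 0$. Once these bookkeeping points are settled, everything else is the standard zig-zag identities together with the long exact sequence for $\Ext^1$ under the bi-exact functor $-\otimes S$.
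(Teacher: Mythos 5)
Your proof is correct and follows essentially the same route as the paper: forward direction via Lemma~\ref{Lem:WIC}; for the converse, bootstrap Condition~\ref{cond:new_req_simple} to the weak ideal condition by induction on length using $\Ext^1$-vanishing, then derive \eqref{itms:req_subcat_D2} from the evaluation/coevaluation zig-zag and \eqref{itms:req_subcat_D3} by tensoring the short exact sequence into $I(\cD)$ and splitting. The only cosmetic differences are that you spell out the length induction that the paper compresses into one sentence, you take $T_1\otimes T_2$ where the paper takes a single $T$ killing $K\oplus Y$, and you explicitly track that the auxiliary objects $T$ must be nonzero (a point left implicit in the paper's statement of Condition~\ref{cond:new_req_simple} but needed for $\co_T$, $\ev_T$ to be mono/epi).
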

\begin{proof}
That conditions \eqref{itms:req_subcat_D2}, \eqref{itms:req_subcat_D3} in \ref{itms:req_subcat_D} imply \ref{cond:new_req_simple} is a special case of Lemma~\ref{Lem:WIC}, so we only prove the other direction.

First of all, recall that all objects in $\cV$ have finite length, so, under our assumption of vanishing of $\Ext^1$ in the proposition, Condition \ref{cond:new_req_simple} implies the weak ideal condition in Lemma~\ref{Lem:WIC}.
Let $X\in\cV$. By this weak ideal condition, 
 there exists $T \in I(\cD)$ such that $T \otimes X \in I(\cD)$. Set $T' = T^* 
\otimes T \otimes X$. Then we have maps $$T' \xrightarrow{ev_{T} \otimes \id_X} 
X , \;\;\; X \xrightarrow{coev_{T^*} \otimes \id_X} T'.$$
 These maps are respectively an epimorphism and a monomorphism, hence Condition \eqref{itms:req_subcat_D2} holds.

 %By \eqref{cond:new_req_subcat_D2}, 
 %there exist $T_1, T_2, T_3 \in I(\cD)$ such that $T_1 \otimes K, T_2 \otimes X, \, T_3 \otimes Y \in I(\cD)$. Now, let $\widetilde{T}:=T_1 \otimes T_2 \otimes T_3$. Then $$\widetilde{T},\; \widetilde{T} \otimes K,\, \widetilde{T} \otimes X, \,\widetilde{T} \otimes Y\in I(\cD).$$ Hence we have a short exact sequence $$ 0 \to \widetilde{T} \otimes K \longrightarrow \widetilde{T} \otimes X \xrightarrow{\id_{\widetilde{T}} \otimes f} \widetilde{T} \otimes Y \to 0.$$
 
 Next, consider an epimorphism $f:X \to Y$ in $\cV$. Set $K = \Ker(f)$, so that we have a short exact sequence $$ 0 \to K \to X \xrightarrow{f} Y \to 0.$$
By  the weak ideal condition, there exist $T \in I(\cD)$ such that $T \otimes (K\oplus Y) \in I(\cD)$, and hence $T\otimes K,T\otimes Y\in I(\cD)$. This implies that the short exact sequence $$ 0 \to {T} \otimes K \longrightarrow {T} \otimes X \xrightarrow{\id_{{T}} \otimes f} {T} \otimes Y \to 0$$
 splits by assumtion. Hence Condition \eqref{itms:req_subcat_D3} holds, as required.
  This completes the proof of the proposition.
\end{proof}

\begin{remark}
A very minor adaptation of the proof of Proposition \ref{prop:equiv_abelian} also shows that under Condition~\ref{itms:req_subcat_D}\eqref{itms:req_subcat_D1} and the assumption that `for any complex $\Sigma:0\to T_2\to T_1\to T_0\to 0$ in $\cD$ the sequence $I(\Sigma)$ splits whenever it is exact in $\cV$', we have that Conditions \eqref{itms:req_subcat_D2} and \eqref{itms:req_subcat_D3} are equivalent to the weak ideal condition in Lemma~\ref{Lem:WIC}.
\end{remark}
%%%%%%%%%%%%%

\subsection{Example: algebraic groups in positive characteristic}

Let $G$ denote a semisimple simply connected algebraic group over an 
algebraically closed field $k$ of characteristic $p > 0$. Let $\Rep(G)$ denote 
the tensor category of finite dimensional rational representations of $G$. Its 
ind-completion is a highest weight category (note that the injective objects are 
typically of infinite dimension). The standard $\Delta(\lambda)$ and costandard 
objects $\nabla(\lambda)$ are parametrised by the dominant integral weights 
$\lambda \in X^+$ {(notation as in \cite[Chapter II.2]{Jantzen})}. 
A module $M \in \Rep(G)$ is called tilting if it has a good filtration (a filtration with every quotient a costandard object) and a Weyl filtration (a filtration with every quotient a standard object). We denote by $Tilt(G)$ the full subcategory of tilting modules. It is a rigid monoidal subcategory of $\Rep(G)$, see \cite[Proposition~E.7]{Jantzen}. Its indecomposable objects are (up to isomorphism) parametrised by the dominant integral weights, see \cite[Proposition E.6]{Jantzen}, and we denote by $T(\lambda)$ the corresponding tilting module.

Of particular importance are the Steinberg modules. We define \[ St_r := L((p^r 
- 1)\rho) = T((p^r-1)\rho),\qquad\mbox{for $r\in\mN$}\] {where 
$\rho$ is half the sum of the positive roots for $G$ (notation as in 
\cite[Section II.1.5]{Jantzen}).}

\begin{theorem}\label{thm:tilt_ab_env}
The category $\Rep(G)$ with the natural embedding $Tilt(G) \to \Rep(G)$ satisfies Condition~\ref{itms:req_subcat_D}, in particular it is the abelian envelope of $Tilt(G)$ in the sense of Definition \ref{def:ab_env}.
\end{theorem}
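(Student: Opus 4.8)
The plan is to verify the three parts of Condition~\ref{itms:req_subcat_D} for the embedding $Tilt(G)\hookrightarrow\Rep(G)$, and then invoke Theorem~\ref{thm:uni-2}. In fact, since tilting modules have no higher $\Ext^1$ among themselves --- this is precisely the statement that $\Ext^1_{\Rep(G)}(T(\lambda),T(\mu))=0$, which follows from the good/Weyl filtration property together with $\Ext^1(\Delta(\lambda),\nabla(\mu))=0$ --- I would instead apply Proposition~\ref{prop:equiv_abelian}. This reduces everything to: (1) full faithfulness of the embedding, which is immediate since $Tilt(G)$ is by definition a full subcategory; and (2) Condition~\ref{cond:new_req_simple}, namely that for every simple object $L(\lambda)\in\Rep(G)$ there exists a tilting module $T$ with $L(\lambda)\otimes T$ tilting.

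The heart of the proof is therefore establishing Condition~\ref{cond:new_req_simple}. The key classical input is Donkin's tensor product theorem for Steinberg modules: for a simply connected semisimple $G$ and any $r\geq 1$, tensoring with the Steinberg module $St_r = L((p^r-1)\rho)$ sends simple modules with highest weight in the lowest alcove / restricted range to tilting modules. More precisely, the relevant fact (due to Donkin, see also the literature on tilting modules over algebraic groups) is that $St_r\otimes L(\lambda)$ is tilting whenever $\lambda$ lies in the $p^r$-restricted region, i.e. $\langle\lambda,\alpha^\vee\rangle < p^r$ for all simple coroots $\alpha^\vee$; indeed in that range $St_r\otimes L(\lambda)$ has a good filtration, and by the symmetry $St_r^*\cong St_r$ it also has a Weyl filtration, hence is tilting. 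Given an arbitrary dominant weight $\lambda\in X^+$, choose $r$ large enough that $\lambda$ is $p^r$-restricted; then $T = St_r$ works, since $L(\lambda)\otimes St_r \in Tilt(G) = I(\cD)$.

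The main obstacle is pinning down the precise form of the tensor-product-with-Steinberg statement and its reference, since this is the one genuinely nontrivial representation-theoretic ingredient; once it is in hand, everything else is formal. I would cite the relevant results from \cite{Jantzen} (Appendix~E on tilting modules, plus the properties of $St_r$ in Section~II.3) and Donkin's work. One should also double-check the edge case $p=2,3$ and small rank, where some subtleties with Steinberg modules arise, but the statement $L((p^r-1)\rho)=T((p^r-1)\rho)$ recorded just before the theorem already handles the identification of $St_r$ as a tilting module in all cases. With Condition~\ref{cond:new_req_simple} verified, Proposition~\ref{prop:equiv_abelian} gives Conditions~\eqref{itms:req_subcat_D2} and \eqref{itms:req_subcat_D3}, and Theorem~\ref{thm:uni-2} then yields that $\Rep(G)$ is the abelian envelope of $Tilt(G)$.
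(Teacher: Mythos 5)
Your structural approach coincides with the paper's: note that $\Ext^1$ vanishes between tilting modules (the paper cites \cite[Corollary~E.2]{Jantzen}), invoke Proposition~\ref{prop:equiv_abelian}, and then establish Condition~\ref{cond:new_req_simple} by tensoring $L(\lambda)$ with a large Steinberg module $St_r$, using the self-duality of $St_r$ and the dominant weight of $L(\lambda)^*$ to promote ``has a good filtration'' to ``tilting.'' That duality step is exactly the paper's argument.

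The gap is in the representation-theoretic input you cite. You assert, attributed to Donkin, that $St_r \otimes L(\lambda)$ is tilting whenever $\lambda$ is $p^r$-restricted (i.e.\ $\langle \lambda, \alpha^\vee \rangle < p^r$ for all \emph{simple} coroots $\alpha^\vee$). This is not a theorem: it is in substance Donkin's tilting module conjecture, which is now known to fail --- the very paper \cite{BNPS} that this theorem relies on produces a counterexample for $G_2$ in characteristic $2$ (with $r=1$). The paper instead uses \cite[Proposition~4.3.1]{BNPS}, whose sufficient condition involves the \emph{highest} coroot $\alpha_0^\vee$: $St_r \otimes L(\lambda)$ has a good filtration whenever $\langle \lambda, \alpha_0^\vee \rangle \leq 2p^r - 1$. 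Since $\alpha_0^\vee$ is a positive integer combination of the simple coroots, for a $p^r$-restricted $\lambda$ in higher rank the pairing $\langle \lambda, \alpha_0^\vee \rangle$ can greatly exceed $2p^r-1$, so your condition does not imply the one that is actually a theorem. The overall conclusion is nevertheless salvageable because for a \emph{fixed} $\lambda$ one may take $r$ as large as one wishes, and the BNPS bound is then eventually satisfied; but the precise statement you need is \cite[Proposition~4.3.1]{BNPS}, not the Donkin-type claim ``$p^r$-restricted $\Rightarrow$ tilting,'' which is false in general.
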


\begin{proof} By \cite[Corollary~E.2]{Jantzen}, the assumptions of Proposition \ref{prop:equiv_abelian} hold, so it is enough to verify Condition \ref{cond:new_req_simple}.

For $L(\lambda)$ with $\langle\lambda, \alpha_0^{\vee}\rangle \leq 2 p^r -1$ the module $St_r \otimes L(\lambda)$ has a good filtration by \cite[Proposition 4.3.1]{BNPS}. Its dual is $(St_r)^{\ast} \otimes L(\lambda)^{\ast} \cong St_r \otimes L(\lambda)^{\ast}$.  The highest weight $-w_0\lambda$ of $L(\lambda)^\ast$ satisfies again the inequality $\langle\lambda, \alpha_0^{\vee}\rangle \leq 2 p^r -1$, so $(St_r \otimes L(\lambda))^{\ast}$ also has a good filtration. Dualising shows that $St_r \otimes L(\lambda)$ has a Weyl filtration as well and is therefore tilting.
Hence for any irreducible module $L$, the module $St_r \otimes L$ is tilting for $r >> 0$. This proves the statement of the theorem.
\end{proof}

\begin{remark} In particular (by Condition \ref{itms:req_subcat_D2}) every 
object $M \in \Rep(G)$ can be embedded in a tilting module. This can be seen 
directly as follows: analogously to the proof of theorem \ref{thm:tilt_ab_env}, 
$M \otimes St_r$ is a tilting module for $r >> 0$. Then $M$ embeds as submodule 
into the tilting module $M \otimes St_r \otimes St_r^*$.
\end{remark}

\begin{remark}
With the same proof, the conclusion of Theorem~\ref{thm:tilt_ab_env} holds for any reductive group $G$ for which (i) $(p-1)\rho$ is a weight and (ii) every $\lambda\in X^+$ has the property that $St_r\otimes L(\lambda)$ is a tilting module for all but finitely many $r\in\mN$.
\end{remark}

\section{On a conjecture of Benson and Etingof} \label{sec:conjecture}
Let $k$ denote an algebraically closed field of characteristic $p>0$.

\subsection{The abelian envelopes $\Ver_{p^n}$}\label{SecC2n}

As before let $Tilt(SL_2)$ denote the symmetric rigid monoidal category of 
$SL_2$-tilting modules over $k$. The indecomposable tilting modules are the 
$T_m$, $m \geq 0$, the tilting modules with highest weight $m\omega$ 
{(where $\omega=\rho$ is the fundamental weight for $SL_2$)}. Clearly 
$T_0 = k$ and $T_1 = V$, the $2$-dimensional standard representation of $SL_2$. 
We briefly review the tensor ideals in $Tilt (SL_2)$, for a full treatment we 
refer to \cite{Selecta}. For $n>0$, denote by $\mathcal{I}_n$ the tensor ideal 
generated by the identity morphism of $T_{p^n-1}$.
This gives a strictly descending infinite chain  \[ Tilt(SL_2)=:\cI_0\supset \mathcal{I}_1 \supset \mathcal{I}_2 \supset \cdots,\qquad\mbox{with}\;\cap_i\mathcal{I}_i=0. \] Let $\mathcal{T}_n = Tilt(SL_2)/\mathcal{I}_n$.

\begin{theorem} \cite[Theorem 1.2 and 1.3]{BEO} There exists an ascending chain of tensor categories over $k$ \[ \Ver_p \subset \Ver_{p^2} \subset \Ver_{p^3} \subset \Ver_{p^4} \subset \cdots \] with fully faithful tensor embeddings such that $\Ver_{p^n}$ is the  abelian envelope of $\mathcal{T}_{n}$ in the sense of Definition \ref{def:ab_env}.
\end{theorem}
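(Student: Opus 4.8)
This is \cite[Theorems~1.2 and 1.3]{BEO}. I only indicate how the abelian-envelope assertion fits the framework of the present section; the plan is to apply Proposition~\ref{prop:equiv_abelian} to the functor $I\colon\cD=\mathcal{T}_n\to\cV=\Ver_{p^n}$. The case $n=1$ is degenerate and immediate: there $\mathcal{I}_1$ is the ideal of negligible morphisms, $\mathcal{T}_1\simeq\Ver_p$ is already a (semisimple) tensor category, hence trivially its own abelian envelope. So I assume $n\ge 2$ from now on.

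The first step is to import from \cite{BEO} the following facts about $\Ver_{p^n}$: (a) it is a finite, non-semisimple tensor category; (b) the functor $I\colon\mathcal{T}_n\to\Ver_{p^n}$ is fully faithful, so that $I(\mathcal{T}_n)$ is the full additive monoidal subcategory of $\Ver_{p^n}$ whose objects are the finite direct sums of the nonzero images $\bar{T}_m$ of the indecomposable $SL_2$-tilting modules $T_m$; (c) every indecomposable projective object of $\Ver_{p^n}$ is one of these $\bar{T}_m$ (those of sufficiently large, non-negligible highest weight); and (d) $\Ext^1_{\Ver_{p^n}}(\bar{T}_m,\bar{T}_{m'})=0$ for all $m,m'$ --- the analogue for $\Ver_{p^n}$ of \cite[Corollary~E.2]{Jantzen}, which was the input used in the proof of Theorem~\ref{thm:tilt_ab_env}.

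Granting (a)--(d), items (b) and (d) are precisely the hypotheses of Proposition~\ref{prop:equiv_abelian}, so it remains only to verify Condition~\ref{cond:new_req_simple}; I would do this in the spirit of the proof of Theorem~\ref{thm:tilt_ab_env}, with projectivity playing the role of the good/Weyl-filtration argument there. By (a) and (c) there is a nonzero projective object $P\in I(\mathcal{T}_n)$. For any simple $X\in\Ver_{p^n}$ the object $X\otimes P$ is again projective (tensoring a projective with any object in a tensor category yields a projective); being projective it is a finite direct sum of indecomposable projectives, hence, by (c) and Krull--Schmidt, a finite direct sum of $\bar{T}_m$'s, that is, $X\otimes P\in I(\mathcal{T}_n)$. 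Thus Condition~\ref{cond:new_req_simple} holds with $T=P$. Proposition~\ref{prop:equiv_abelian} then gives all of Condition~\ref{itms:req_subcat_D}, and Theorem~\ref{thm:uni-2} yields that $(\Ver_{p^n},I)$ is the abelian envelope of $\mathcal{T}_n$. The ascending chain $\Ver_{p^n}\hookrightarrow\Ver_{p^{n+1}}$ I would take from \cite[Theorem~1.2]{BEO}; note that it does not follow from the universal properties alone, since the evident functor $Tilt(SL_2)\to\Ver_{p^{n+1}}$ factors through $\mathcal{T}_{n+1}$, not through $\mathcal{T}_n$.

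The genuine obstacle is not this bookkeeping but the imported inputs: the construction of $\Ver_{p^n}$ as an honest tensor category and of the embeddings among them --- the substantial content of \cite{BEO}, which cannot be bypassed --- together with the structural facts (c) and (d) identifying the indecomposable projectives of $\Ver_{p^n}$ with images of $SL_2$-tilting modules and providing the $\Ext^1$-vanishing. Once these are available, the reduction to Proposition~\ref{prop:equiv_abelian} is purely formal, exactly as for Theorem~\ref{thm:tilt_ab_env}.
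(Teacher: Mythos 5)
The paper itself offers no proof of this statement: it is imported wholesale as \cite[Theorems~1.2 and 1.3]{BEO}, so there is nothing internal to compare your argument against. What you have done is supply a sketch showing how the abelian-envelope assertion can be re-derived within the framework of Section~\ref{sec:env}, given the structural results of \cite{BEO}. Your reduction is sound as a plan, and the use of projectivity to verify Condition~\ref{cond:new_req_simple} is a clean idea: since $\Ver_{p^n}$ is a finite tensor category, tensoring a simple with any nonzero projective $P$ yields a projective, which then decomposes into indecomposable projectives lying in $I(\mathcal{T}_n)$. This plays the same role as the Steinberg-module argument used for $\Rep(G)$ in Theorem~\ref{thm:tilt_ab_env}, but exploits finiteness rather than good/Weyl filtrations. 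You are also right that the ascending chain does not follow from the universal property alone and has to be taken from \cite{BEO}.

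Two caveats worth flagging. First, your input (d), the vanishing $\Ext^1_{\Ver_{p^n}}(\bar{T}_m,\bar{T}_{m'})=0$, is not a formal consequence of \cite[Corollary~E.2]{Jantzen}; it rests on the fact that \cite{BEO} equip $\Ver_{p^n}$ with a highest weight structure for which the images of $SL_2$-tilting modules are precisely the tilting objects. This is a substantive result of \cite{BEO}, not a transport of the $\Rep(SL_2)$ statement, and should be cited precisely. Second, input (c), that the indecomposable projectives of $\Ver_{p^n}$ are exactly certain $\bar{T}_m$, is likewise a structural theorem of \cite{BEO}. With these inputs in place your reduction through Proposition~\ref{prop:equiv_abelian} is correct, and is, as you say, purely formal; the paper's decision to cite rather than reprove reflects that the real content is (a)--(d), which you have correctly identified as irreducible imports.
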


Note that $\Ver_p$ is equal to $Tilt(SL_2)/\mathcal{I}_1$, i.e. it is the semisimplification of $Tilt(SL_2)$, introduced as the universal Verlinde category in \cite{Ostrik}.
We will consider objects in $\Ver_{p^m}$ as objects in $\Ver_{p^n}$ for $m<n$ using the embedding in the theorem.

By their property of abelian envelopes, we have monoidal functors $Tilt(SL_2)\to \Ver_{p^n}$ for each $n>0$, and we write the image of the indecomposable tilting modules as $T_i\mapsto \mT_i^{[n]}$. In particular, $\mT^{[n]}_i=0$ for $i\ge p^n-1$ and $\mT^{[n]}_0=\unit$.

We denote by $\Lambda^{[n]}$ the set of integers $a$ with $0\le a< p^{n}-p^{n-1}$. For any integer $a$, we write its $p$-adic expansion as
$$a=\sum_{i\ge 0}a_ip^i.$$
It is proved in \cite[Theorem~4.42]{BEO} that the isomorphism classes of simple objects in $\Ver_{p^n}$ can be labelled by $\Lambda^{[n]}$ as follows:
\begin{equation}\label{LabelSimp}L_a^{[n]}\;:=\; \mT^{[1]}_{a_{n-1}}\otimes \mT^{[2]}_{a_{n-2}}\otimes\cdots\otimes \mT^{[n]}_{a_0},\quad\mbox{for $a\in\Lambda^{[n]}$}.\end{equation}
For $a\in\Lambda^{[n-1]}\subset\Lambda^{[n]}$, we can see that the simple object $L_a^{[n-1]}$ of $\Ver_{p^{n-1}}$ remains simple $\Ver_{p^n}$. We stress that $L_a^{[n-1]}\not= L_a^{[n]}$, but rather $L_a^{[n-1]}= L_{pa}^{[n]}$. We will use freely, see \cite[Proposition~4.59(ii)]{BEO}, that $\Ver_{p^{n-1}}$ is the Serre subcategory of $\Ver_{p^n}$ generated by the simple objects $L_a^{[n-1]}$ (or $L_{pa}^{[n]}$) with $a\in\Lambda^{[n-1]}$.

The category $\Ver_{p^n}$ is filtered by the full topologising subcategories $\Ver_{p^n}^r$, $r \geq 0$, which consists of subquotients of direct sums of $(L_1^{[n]})^{\otimes j}$, $j \leq r$. That
$$\Ver_{p^n}\;=\;\bigcup_{r>0} \Ver_{p^n}^r$$
follows from the fact that every object in $\Ver_{p^n}$ is a quotient of a direct sum of objects $(L_1^{[n]})^{\otimes j}$ (since Condition~\ref{itms:req_subcat_D} is satisfied).
The notation $\Ver_{p^n}^r$ also appears briefly in \cite[Remark~4.46]{BEO}. However, by definition (and Corollary~\ref{Lem2}(1) below) the category from \cite[Remark~4.46]{BEO} is the Serre subcategory generated by $\Ver_{p^n}^r$ as defined above.

%The categories $\cC_{2i}$ are equipped with a distinguished self dual simple object $X_i$. Under the fully faithful embedding $F_n: \mathcal{T}_{n+1} \to \cC_{2n}$, $X_n$ is the image of $T_1$ \cite[Corollary 3.6]{BE}. The object $X_n \in \cC_{2n}$ satisfies $\dim (X_n) = 0$, $\wedge^2 X_n =   \unit$, $\wedge^3 X_n = 0$ \cite[Section 3.8]{BE}. The simple objects of $\cC_{2n}$ are \cite[Theorem 2.1]{BE} $X_S : = \bigotimes_{i \in S} X_i$ where $S \subset \{ 1,\ldots,n\}$ and we view $X_i$ as an object in $\cC_{2n}$ via the embedding $\cC_{2i} \subset \cC_{2n}$. By convention, we have $X_{\varnothing}=\unit$.

\subsection{The Grothendieck rings}

We start by some observations concerning the Grothendieck rings of $\Ver_{p^n}$ that will be needed later on. %For $a,b\in\mZ$, we denote by $[\![a,b]\!]$ the set of integers $x$ with $a\le x\le b$.
%For any $S\subset [\![2,n]\!]$ we denote by $S'$ the subset of $ [\![1,n-1]\!]$ obtained by subtracting 1 from each element of $S$. In particular, we have $\varnothing'=\varnothing$.

\begin{lemma}
\label{Lem1}
Fix $n\in\mZ_{>0}$ and $a,b\in \Lambda^{[n]}$.
\begin{enumerate}
\item If $a<p^{n}-p^{n-1}-1$, then $[L_1^{[n]}\otimes L_a^{[n]}:L_{a+1}^{[n]}]=1$ and $[L_1^{[n]}\otimes L_a^{[n]}:L_{b}^{[n]}]=0$ whenever $b>a+1$.
\item If $n>1$ and $a<p^{n-1}-p^{n-2}-1$ then
$$[L_1^{[n]}\otimes L_a^{[n]}:L_b^{[n]}]=\begin{cases} 0&\mbox{if } b\not\in \Lambda^{[n-1]},\\
[L_1^{[n-1]}\otimes L_a^{[n-1]}:L_b^{[n-1]}]&\mbox{if } b\in \Lambda^{[n-1]}.
\end{cases}$$
\end{enumerate}
\end{lemma}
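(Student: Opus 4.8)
The plan is to reduce everything to known facts about $Tilt(SL_2)$ and the labelling \eqref{LabelSimp} of the simple objects. The key observation is that $L_1^{[n]} = \mT_1^{[n]} = \mathbb{T}_1^{[n]}$ is the image of the standard tilting module $T_1 = V$, and that tensoring with $V$ in $Tilt(SL_2)$ (or rather in its quotient $\mathcal{T}_n$, and then in the abelian envelope $\Ver_{p^n}$) is governed by the classical Clebsch--Gordan-type rules for $SL_2$ in characteristic $p$. For part (1), I would argue as follows. Recall from \eqref{LabelSimp} that $L_a^{[n]} = L_{a_0}^{[n]} \otimes (\text{stuff in } \Ver_{p^{n-1}})$, where I mean $\mathbb{T}_{a_0}^{[n]}$ tensored with the pullback of $L_{\lfloor a/p\rfloor}^{[n-1]}$. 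So $L_1^{[n]} \otimes L_a^{[n]}$ splits as $(\mathbb{T}_1^{[n]} \otimes \mathbb{T}_{a_0}^{[n]})$ tensored with the $\Ver_{p^{n-1}}$-part, and the only piece that moves is $\mathbb{T}_1^{[n]} \otimes \mathbb{T}_{a_0}^{[n]}$. Now $\mathbb{T}_1^{[n]} \otimes \mathbb{T}_{a_0}^{[n]}$ is the image of $T_1 \otimes T_{a_0}$ in $\Ver_{p^n}$; since $0 \le a_0 \le p-1$, we are tensoring standard-sized tilting modules and $T_1 \otimes T_{a_0} = T_{a_0+1} \oplus T_{a_0-1}$ (with the convention $T_{-1}=0$) as long as $a_0 + 1 \le p^n - 1$, which holds here. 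The hypothesis $a < p^n - p^{n-1} - 1$ is exactly what guarantees we do not fall into the range where $\mathbb{T}_{a_0+1}^{[n]}$ could vanish or where carrying in the $p$-adic expansion would interfere; it ensures $a_0 < p-1$ when the higher digits are maximal, and more to the point that $a+1 \in \Lambda^{[n]}$ still and $(a+1)_0 = a_0 + 1$ with all higher digits unchanged. Then $\mathbb{T}_{a_0+1}^{[n]} \otimes (\Ver_{p^{n-1}}\text{-part of } L_a^{[n]}) = L_{a+1}^{[n]}$ and $\mathbb{T}_{a_0-1}^{[n]} \otimes (\cdots)$ contributes only composition factors $L_b^{[n]}$ with $b \le a-1 < a+1$; combined with the fact that $\mathbb{T}_{a_0\pm1}^{[n]}$ are indecomposable tilting images with $L_{a_0\pm1}^{[n]}$ as their top, one reads off that the multiplicity of $L_{a+1}^{[n]}$ is exactly $1$ and no $L_b^{[n]}$ with $b > a+1$ appears.

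For part (2), the extra hypothesis $a < p^{n-1} - p^{n-2} - 1$ forces the top $p$-adic digit $a_{n-1}$ of $a$ to be $0$, so in \eqref{LabelSimp} we have $L_a^{[n]} = \mathbb{T}_0^{[1]} \otimes \mathbb{T}_{a_{n-2}}^{[2]} \otimes \cdots \otimes \mathbb{T}_{a_0}^{[n]} = \unit \otimes (\cdots)$, i.e. $L_a^{[n]}$ lies in the Serre subcategory $\Ver_{p^{n-1}} \subset \Ver_{p^n}$, and indeed $L_a^{[n]} = L_{\lfloor a/1 \rfloor}$... more precisely $L_a^{[n]}$ is the image under the embedding $\Ver_{p^{n-1}} \hookrightarrow \Ver_{p^n}$ of a simple object, with the index-shift $L_a^{[n-1]} = L_{pa}^{[n]}$ recorded in the text. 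Since $L_1^{[n]}$ is likewise $\mathbb{T}_1^{[n]}$ and $\Ver_{p^{n-1}}$ is the Serre subcategory generated by the $L_b^{[n-1]}$ with $b \in \Lambda^{[n-1]}$, the computation of $L_1^{[n]} \otimes L_a^{[n]}$ reduces: either the answer already lives in $\Ver_{p^{n-1}}$, in which case it must agree with the computation $L_1^{[n-1]} \otimes L_a^{[n-1]}$ performed inside $\Ver_{p^{n-1}}$ (because $\Ver_{p^{n-1}} \hookrightarrow \Ver_{p^n}$ is exact and fully faithful, so composition multiplicities are preserved), or it does not, in which case those composition factors $L_b^{[n]}$ with $b \notin \Lambda^{[n-1]}$ have multiplicity $0$. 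The content is thus to show $L_1^{[n]} \otimes L_a^{[n]} \in \Ver_{p^{n-1}}$ when the digit condition holds: this follows because $L_1^{[n]} \otimes L_a^{[n]}$ is the image of $T_1 \otimes T_a$ (with $a < p^{n-1}$) in $\Ver_{p^n}$, and the decomposition of $T_1 \otimes T_a$ into indecomposable tilting modules $T_c$ only involves $c$ with $c \le a + 1 \le p^{n-1} - p^{n-2} - 1 < p^{n-1}$, hence all of which map into $\Ver_{p^{n-1}}$ after quotienting — one uses here that $\mathbb{T}_c^{[n]}$ for $c < p^{n-1}$ is supported on $\Ver_{p^{n-1}}$, which in turn follows from compatibility of the embeddings $\Ver_{p^{n-1}} \hookrightarrow \Ver_{p^n}$ with the functors from $Tilt(SL_2)$.

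The main obstacle I anticipate is keeping careful track of the index conventions and the $p$-adic bookkeeping: the distinction $L_a^{[n-1]} = L_{pa}^{[n]}$ means that "the same $a$" labels genuinely different simples in the two categories, and the hypotheses in the lemma are calibrated precisely so that the relevant $p$-adic digit of $a$ (and of $a+1$) behaves as expected with no carrying. I would therefore state at the outset a clean sublemma: for $a \in \Lambda^{[n]}$ with top digit $a_{n-1} = 0$, the simple $L_a^{[n]}$ is the image of $L_{a}^{[n-1]}$... (with the appropriate reindexing) and $\mathbb{T}_c^{[n]}$ for $0 \le c < p^{n-1}$ is the image of $\mathbb{T}_c^{[n-1]}$ under $\Ver_{p^{n-1}} \hookrightarrow \Ver_{p^n}$; granting this, both parts become routine consequences of the $SL_2$-tilting tensor product rule $T_1 \otimes T_m = T_{m+1} \oplus T_{m-1}$ (valid in the appropriate range) together with exactness and full faithfulness of the Serre embedding. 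A secondary point to handle with care: in part (1) one must rule out composition factors $L_b^{[n]}$ with $b > a+1$ not merely in the "moving digit" but also any that could arise from the $\Ver_{p^{n-1}}$-part; this is where one invokes that tensoring with $\mathbb{T}_1^{[n]}$ does not touch the lower-digit tensor factors at all, so the ordering on $\Lambda^{[n]}$ is respected.
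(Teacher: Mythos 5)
Your proposal takes a genuinely different route from the paper. The paper quotes explicit composition-multiplicity formulas from \cite[Corollary~4.50]{BEO} and runs an induction on $n$; you instead try to reduce everything to the tilting tensor rule $T_1\otimes T_{a_0}\cong T_{a_0+1}\oplus T_{a_0-1}$ applied to the last factor $\mathbb{T}^{[n]}_{a_0}$ of \eqref{LabelSimp}. This is an appealing idea, but as written it has two genuine gaps, and the second is fatal to your part~(2).

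For part~(1), your argument is valid only when $a_0<p-1$. You claim that $a<p^n-p^{n-1}-1$ ``ensures $a_0<p-1$'', but that hypothesis only constrains the top digit $a_{n-1}$; the bottom digit $a_0$ can perfectly well equal $p-1$ (e.g.\ $p=3$, $n=2$, $a=2$ satisfies $a<5$). When $a_0=p-1$, one has $T_1\otimes T_{p-1}\cong T_p\oplus T_{p-2}$, and $\mathbb{T}^{[n]}_p$ is not simple for $n>1$; determining its composition factors and confirming that the highest one is $L_{a+1}^{[n]}$ with multiplicity one is exactly the content that the paper's second recursion formula (the $a_0=p-1$ case of \cite[Corollary~4.50]{BEO}, together with an induction on $n$) supplies. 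Your proposal does not address this case.

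For part~(2), the central assertion --- that $a_{n-1}=0$ forces $L_a^{[n]}\in\Ver_{p^{n-1}}\subset\Ver_{p^n}$, and hence $L_1^{[n]}\otimes L_a^{[n]}\in\Ver_{p^{n-1}}$ --- is false. As stated in Section~\ref{SecC2n}, the Serre subcategory $\Ver_{p^{n-1}}$ is generated by the simples $L_b^{[n]}$ with $p\mid b$ (i.e.\ $b_0=0$), not those with top digit zero; the index shift is $L_a^{[n-1]}=L_{pa}^{[n]}$, which runs the opposite way from what you use. In particular $L_1^{[n]}=\mathbb{T}^{[n]}_1$ is \emph{not} an object of $\Ver_{p^{n-1}}$, so neither is $L_1^{[n]}\otimes L_a^{[n]}$, and the auxiliary claim that ``$\mathbb{T}^{[n]}_c$ for $c<p^{n-1}$ is supported on $\Ver_{p^{n-1}}$'' is also wrong (take $c=1$). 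Part~(2) is not an identity of objects under the Serre embedding; it is a purely numerical coincidence of composition multiplicities, true because the $a_0<p-1$ multiplicity formula is $n$-independent and the $a_0=p-1$ formula recurses in $n$. An argument that tries to identify the two tensor products as objects in a common category will not close.
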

\begin{proof}
%By definition of the objects $L_i^{[n]}$ and with the added convention that we let $L_i^{[n]}$ denote the zero object whenever $i\ge p^n-p^{n-1}$, we can reformulate 

We rewrite a special case of \cite[Corollary~4.50]{BEO} as follows. 
%Consider arbitrary $a,b\in \Lambda^{[n]}$. 
If $a_0<p-1$, we have 
\begin{equation}\label{Eq1}[L^{[n]}_1\otimes L_a^{[n]}:L_b^{[n]}]\;=\;\begin{cases}
1&\mbox{if $b=a+1$}\\
1&\mbox{if $b=a-1$ and $a_0>0$}\\
0&\mbox{otherwise.}
\end{cases}\end{equation}
On the other hand, if $n>1$ and $a_0=p-1$, we have
\begin{equation}\label{Eq2}[L^{[n]}_1\otimes L_a^{[n]}:L_b^{[n]}]-2\delta_{b,a-1}\;=\;\begin{cases}[L_1^{[n-1]}\otimes L_{\frac{a+1-p}{p}}^{[n-1]} :L_{\frac{b}{p}}^{[n-1]} ]&\mbox{if $p$ divides $b$}\\
0&\mbox{if $p$ does not divide $b$.}
\end{cases}\end{equation}

The first formula proves part (1) instantly for the case $a_0<p-1$. For the remaining case, we perform induction on $n$. If $n=1$, then $a=a_0$ is by assumption smaller than $p-1$. Assume that the claim is proved for $n-1$, then it follows quickly for $n$ from the second displayed formula. This concludes the proof of part (1).

Now we prove part (2). That the multiplicity is zero whenever $b\not\in \Lambda^{[n-1]}$ follows from part (1), since the latter condition implies in particular that $b>a+1$. If $a_0<p-1$, the fact that the right-hand side in \eqref{Eq1} does not depend on $n$ proves the remaining multiplicities in part (2). Finally, the case $a_0=p-1$ can again be proved by induction on $n$ by \eqref{Eq2}.
\end{proof}

\begin{corollary}\label{Cor:Simp}
\label{Lem2}
\begin{enumerate}
\item For an arbitrary $r\in\mZ_{>0}$, the simple objects in $\Ver_{p^n}^r$ are precisely the simple objects $L_{m}^{[n]}$ with $m\le r$ (and~$m\in\Lambda^{[n]}$).
\item Consider $i< p^{n-1}-p^{n-2}$ and $b\in \Lambda^{[n]}$. We have
$$[\otimes^iL^{[n]}_1:L_b^{[n]}]=\begin{cases}
0&\mbox{if }  b\not\in\Lambda^{[n-1]},\\
[\otimes^iL^{[n-1]}_1:L_b^{[n-1]}]&\mbox{if }b\in\Lambda^{[n-1]}.
\end{cases}$$
\end{enumerate}
\end{corollary}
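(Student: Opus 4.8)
The plan is to prove both parts of Corollary~\ref{Lem2} using Lemma~\ref{Lem1} together with the inductive structure of the categories $\Ver_{p^n}$ coming from the description of simple objects in \eqref{LabelSimp} and the fact that $\Ver_{p^{n-1}}$ is the Serre subcategory of $\Ver_{p^n}$ generated by the $L_{pa}^{[n]}$, $a\in\Lambda^{[n-1]}$.

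For part (1), I would argue by induction on $r$. The base case $r=1$ says that the simple objects in $\Ver_{p^n}^1$ are $L_0^{[n]}=\unit$ and $L_1^{[n]}$, which is immediate from the definition of $\Ver_{p^n}^1$ as subquotients of sums of $(L_1^{[n]})^{\otimes j}$ with $j\le 1$ (note $L_1^{[n]}$ is simple, and by rigidity its summands already give $\unit$). For the inductive step, one direction is that any $L_m^{[n]}$ with $m\le r$ occurs in $\Ver_{p^n}^r$: this follows from Lemma~\ref{Lem1}(1), which shows that $L_{m}^{[n]}$ appears as a composition factor of $L_1^{[n]}\otimes L_{m-1}^{[n]}$ (the latter lying in $\Ver_{p^n}^{r}$ by induction since $L_{m-1}^{[n]}\in\Ver_{p^n}^{r-1}$), provided $m-1<p^n-p^{n-1}-1$; one must also note that for $m=p^n-p^{n-1}-1$, which is the largest element of $\Lambda^{[n]}$, a separate check or the boundary statement of Lemma~\ref{Lem1}(1) handles it. Conversely, every composition factor of $(L_1^{[n]})^{\otimes j}$ with $j\le r$ has label $\le r$: this follows by iterating Lemma~\ref{Lem1}(1), which guarantees that tensoring with $L_1^{[n]}$ raises the maximal label by at most $1$. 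Since $\Ver_{p^n}^r$ is topologising (closed under subquotients), every simple object in it is a composition factor of some such tensor power, so its label is $\le r$.

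For part (2), fix $i<p^{n-1}-p^{n-2}$ and expand $(L_1^{[n]})^{\otimes i}$ in the Grothendieck ring step by step, tensoring with one factor of $L_1^{[n]}$ at a time. At each of the $i$ intermediate stages, every composition factor $L_a^{[n]}$ that has appeared so far satisfies $a\le i-1<p^{n-1}-p^{n-2}-1$ by (the argument behind) part (1), so Lemma~\ref{Lem1}(2) applies to each such factor: tensoring with $L_1^{[n]}$ produces only composition factors with labels in $\Lambda^{[n-1]}$, and the multiplicities are exactly those computed in $\Ver_{p^{n-1}}$ for $L_1^{[n-1]}\otimes L_a^{[n-1]}$. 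Comparing this recursion with the identical recursion computing $(L_1^{[n-1]})^{\otimes i}$ in the Grothendieck ring of $\Ver_{p^{n-1}}$, and using that $L_a^{[n]}\leftrightarrow L_a^{[n-1]}$ (i.e.\ $L_a^{[n-1]}=L_{pa}^{[n]}$ is the simple in the Serre subcategory, but here the labels $a\in\Lambda^{[n-1]}$ match directly under the correspondence used in Lemma~\ref{Lem1}(2)), the two computations coincide term by term, giving the stated equality. I should take care that the bookkeeping of which label set ($\Lambda^{[n]}$ versus $\Lambda^{[n-1]}$) is being used matches the normalisation in Lemma~\ref{Lem1}(2).

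The main obstacle I anticipate is keeping the inductive bounds tight and consistent: part (2)'s recursion only closes up because the hypothesis $i<p^{n-1}-p^{n-2}$ ensures that \emph{all} intermediate labels stay below the threshold $p^{n-1}-p^{n-2}-1$ where Lemma~\ref{Lem1}(2) is valid, and one must verify that tensoring never overshoots this bound before the $i$-th step — this is exactly what part (1) delivers, so the two parts must be proved in the right order and part (1) must be stated with a bound (labels $\le r$ after $r$ tensor factors) strong enough to feed into part (2). The rest is a routine comparison of two identical linear recursions in the respective Grothendieck rings.
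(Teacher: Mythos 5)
Your proof is correct and takes essentially the same route as the paper: part~(1) by iterating Lemma~\ref{Lem1}(1) (the paper states this in one line), and part~(2) by an induction on $i$ in the Grothendieck ring in which part~(1) bounds the intermediate labels by $i-1 < p^{n-1}-p^{n-2}-1$ so that Lemma~\ref{Lem1}(2) applies at every step. The only difference is cosmetic: your worry about the boundary case $m=p^n-p^{n-1}-1$ in part~(1) is unnecessary, since for $a=m-1$ the hypothesis $a<p^n-p^{n-1}-1$ of Lemma~\ref{Lem1}(1) is exactly the condition $m\in\Lambda^{[n]}$, and there are no labels above $p^n-p^{n-1}-1$ to "overshoot" to.
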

\begin{proof}
Part (1) follows by iteration from Lemma~\ref{Lem1}(1). %, for all $i\in\Lambda^{[n]}$, we have that $[\otimes^iL_1^{[n]}:L_i]=1$. Furthermore, all other simple constituents of $\otimes^iL_1^{[n]}$ are contained in some $\otimes^jL_1^{[n]}$ with $j<i$. This implies part (1).

We prove part (2) by induction on $i$. If $i=1$ the claim is a tautology. Assume that $i>1$ and the claim is true for $i-1$. Since $-\otimes-$ is exact, and using part (1), we have
$$[\otimes^iL^{[n]}_1:L_b^{[n]}]\;=\;\sum_{a< i}[\otimes^{i-1}L^{[n]}_1:L_a^{[n]}][L_1^{[n]}\otimes L_a^{[n]}:L_b^{[n]}].$$
By assumption, we have $a< p^{n-1}-p^{n-2}-1$ when $a<i$, so $a\in \Lambda^{[n-1]}$. It then follows from the induction hypothesis on $i-1$ and Lemma~\ref{Lem1}(2) that
$$[\otimes^iL^{[n]}_1:L_b^{[n]}]\;=\;\sum_{a< i}[\otimes^{i-1}L^{[n-1]}_1:L_a^{[n-1]}][L_1^{[n-1]}\otimes L_a^{[n-1]}:L_b^{[n-1]}]$$
if $b\in\Lambda^{[n-1]}$ and zero otherwise.

Applying part (1) again, we find that the right-hand side equals $[\otimes^iL^{[n-1]}_1:L_b^{[n-1]}]$, which concludes the proof.
\end{proof}

We conclude with an observation about first extensions we will need.
\begin{lemma}\label{LemExt10}
For each $a\in\Lambda^{[n]}$, we have
$$
\dim_k\Ext^1_{\Ver_{p^n}}(L_a^{[n]},\unit)=\begin{cases}
1&\mbox{ if $a=p^i+p^{i-1}(p-2)$ for some $1\le i\le n-1$;}\\
0&\mbox{ otherwise.}
\end{cases}
$$
\end{lemma}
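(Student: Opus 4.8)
The plan is to reduce the computation of $\Ext^1_{\Ver_{p^n}}(L_a^{[n]},\unit)$ to a recursive statement, using the embeddings $\Ver_{p^{n-1}}\subset\Ver_{p^n}$ and induction on $n$, together with the multiplicative labelling \eqref{LabelSimp} of simple objects. First I would dualise: since $\unit$ is simple and self-dual, $\Ext^1_{\Ver_{p^n}}(L_a^{[n]},\unit)\cong\Ext^1_{\Ver_{p^n}}(\unit,(L_a^{[n]})^*)$, so the question becomes whether the projective cover (or rather, the relevant non-split self-extension situation) of $\unit$ sees $L_a^{[n]}$ as a subquotient of its radical. Equivalently, I want to compute $[\,\mathrm{rad}\,P/\mathrm{rad}^2 P : L_a^{[n]}\,]$ where $P$ is the projective cover of $\unit$ in $\Ver_{p^n}$, or work dually with the injective hull of $\unit$; the tilting/projective structure of $\Ver_{p^n}$ recorded in \cite{BEO} should make one of these explicit.

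The key input I expect to use is the structure of $\Ver_{p^n}$ as the abelian envelope of $\mathcal{T}_n=Tilt(SL_2)/\mathcal{I}_n$, i.e. Theorem~(\cite[Thm 1.2, 1.3]{BEO}), and the description of the blocks/linkage of $\Ver_{p^n}$. Concretely, the image $\mT_i^{[n]}$ of the indecomposable tilting module $T_i$ for $0<i<p^n-1$ is tilting in $\Ver_{p^n}$, and for $i$ just above a Steinberg-type weight these tilting modules are uniserial of length giving precisely one self-extension of $\unit$ or an extension of $\unit$ by some $L_a^{[n]}$. The condition $a=p^i+p^{i-1}(p-2)=p^{i-1}(p^2-p-1)$... let me recompute: $p^i+p^{i-1}(p-2)=p^{i-1}(p+p-2)=p^{i-1}(2p-2)=2p^{i-1}(p-1)$. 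So $a=2(p^i-p^{i-1})$ in the range $1\le i\le n-1$, which is exactly twice the "gap" $p^i-p^{i-1}$ appearing in the linkage. I would identify the tilting module $\mT_{2(p^i-p^{i-1})}^{[n]}$ (or a nearby one) and show by the translation-principle / known $SL_2$ tilting character formulas that it carries exactly a non-split extension $0\to\unit\to E\to L_a^{[n]}\to 0$, while no other simple $L_b^{[n]}$ admits such an extension. The inductive step uses that $\Ver_{p^{n-1}}$ is a Serre subcategory of $\Ver_{p^n}$ (so $\Ext^1$ can only grow as $n$ grows, contributing the new value $i=n-1$), together with Lemma~\ref{Lem1} and Corollary~\ref{Lem2} to control which simples are "close to $\unit$" in the sense of short tensor powers of $L_1^{[n]}$.

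The main obstacle will be the vanishing half: showing that $\Ext^1_{\Ver_{p^n}}(L_a^{[n]},\unit)=0$ for all $a$ not of the listed form. For this I would argue that a non-split extension of $\unit$ by $L_a^{[n]}$ forces $L_a^{[n]}$ to lie in the principal block and, via the abelian-envelope description, to appear as a subquotient of the image of some indecomposable tilting module $T_m$ with $\unit$ also a subquotient "adjacent" to it; the $SL_2$ Weyl/tilting combinatorics (Donkin's tensor product theorem for tilting modules, reduction to $T_{2(p^r-1)}\cong St_r\otimes St_r$ type objects) then pin down $a$ exactly. One must be careful that passing to the quotient $Tilt(SL_2)/\mathcal{I}_n$ and then to the abelian envelope can create extensions not visible in $Tilt(SL_2)$ itself, so the cleanest route is probably to use the explicit block description and decomposition numbers of $\Ver_{p^n}$ from \cite{BEO} directly, rather than lifting to $SL_2$. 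I would also double-check the boundary cases $a$ near $p^{n-1}-p^{n-2}$ and $a$ near $p^n-p^{n-1}$, where the Serre-subcategory recursion changes behaviour, using Corollary~\ref{Lem2}(2).
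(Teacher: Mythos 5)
The paper does not prove this lemma directly: the proof is a pure citation, invoking \cite[Proposition~4.65]{BEO} for $p>2$ and \cite[Proposition~4.12]{BE} for $p=2$ (with a short dictionary translating the notation of the latter). Your proposal is instead a from-scratch rederivation, which is a genuinely different and far more labour-intensive route; the authors simply defer the $\Ext^1$ computation to the cited works, where the radical-filtration and block structure of $\Ver_{p^n}$ is worked out in detail.

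As a plan, what you sketch has a real gap in the inductive step. Under $\Ver_{p^{n-1}}\hookrightarrow\Ver_{p^n}$ the simple $L_a^{[n-1]}$ becomes $L_{pa}^{[n]}$, so the labels $a=p^{i'}+p^{i'-1}(p-2)$ for $\Ver_{p^{n-1}}$ are sent to $pa=p^i+p^{i-1}(p-2)$ with $i=i'+1$: the new value appearing at stage $n$ is $i=1$ (i.e.\ $a=2p-2$), not $i=n-1$ as you wrote. More importantly, the Serre-subcategory inclusion only guarantees \emph{injectivity} of $\Ext^1_{\Ver_{p^{n-1}}}(X,Y)\to\Ext^1_{\Ver_{p^n}}(X,Y)$, not an isomorphism, and new extensions genuinely appear between objects that were already present. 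For $p=2$, for instance, the new nonzero $\Ext^1$ at stage $n$ is between $L_2^{[n]}=L_1^{[n-1]}$ and $\unit$, both of which already lie in $\Ver_{2^{n-1}}$ where this $\Ext^1$ vanishes. So the induction cannot simply ride the Serre property; one must recompute $\Ext^1$ at each stage, which is exactly the hard content the cited propositions supply. Your observations that $p^i+p^{i-1}(p-2)=2p^{i-1}(p-1)$ and that the relevant extension sits inside a tilting object of type $T_{2p-2}$ are both correct, and the paper does use this downstream in Lemma~\ref{Lem:Ext} (via $\mT^{[j+1]}_{2p-2}$, not $\mT^{[n]}_{2(p^i-p^{i-1})}$ as you named it), but for Lemma~\ref{LemExt10} itself the authors simply cite \cite{BE,BEO}.
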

\begin{proof}
For $p>2$, this can be obtained from \cite[Proposition~4.65]{BEO}. If $p=2$, it follows from \cite[Proposition~4.12]{BE}\footnote{In the latter source, the notation $X_S$ ($S \subset \{1,\ldots, n-1\}$) stands for the simple module $L_a^{[n]}$ where  $p=2$ and $a =\sum_{j \in S} 2^{n-1-j}$. In particular, $X_{n-1}$ stands for $L_1^{[n]}$ in our notation.}.
\end{proof}

\subsection{The Frobenius functor}\label{ssec:Fr}

We discuss the Frobenius functor from \cite{Kevin-tannakian, EO, Ostrik} acting on $\Ver_{p^n}$. As defined in \cite{EO}, the Frobenius functor on a tensor category $\cC$ is an $\mF_p$-linear symmetric monoidal functor 
$$\Fr:\cC\to\cC\boxtimes\Ver_p,$$
where the right-hand side is a special case of Deligne's tensor product (since $\Ver_p$ is semisimple), explained in \cite[\S2.2]{Ostrik}.
As explained in \cite{Kevin-tannakian, EO}, in case $\cC=\Rep G$ for an affine group scheme $G$ over $k$, this reduces to the ordinary Frobenius twist in the following sense. Firstly, $\Fr$ takes values in $\Rep G=\Rep G\boxtimes \Ve\subset \Rep G\boxtimes\Ver_p$ and $\Fr$ is then the functor which sends a representation $V$ to the subrepresentation $V^{(1)}$ in the symmetric power $S^pV$ spanned by the classes of all vectors $v\otimes  v\otimes\cdots \otimes v\in V^{\otimes p}$, for $v\in V$.

 We thus consider
\begin{equation}\label{OrigFr}\Fr:\Ver_{p^n}\;\to\; \Ver_{p^n}\boxtimes\Ver_p.\end{equation}
                                                                                
As observed in \cite[Remark~4.46]{BEO}, when we restrict $\Fr$ to $\Ver_{p^n}^r$ for $r< p^{n-1}$, then it actually factors as
$$\Fr:\Ver_{p^n}^r\;\to\; \Ver_{p^n}\cong \Ver_{p^n}\boxtimes \Ve \subset \Ver_{p^n}\boxtimes\Ver_p,$$
and more concretely, by \cite[Corollary~4.45]{BEO} we have for $b\in\Lambda^{[n-1]}$
\begin{equation}\label{FrSimple}\Fr(L_b^{[n]})\;\cong\;L_b^{[n-1]}=L_{pb}^{[n]}.\end{equation}
Consequently, the restriction of $\Fr$ to $\Ver_{p^n}^r$ lands in the Serre subcategory $\Ver_{p^{n-1}}\subset \Ver_{p^n}$. 

We will henceforth interpret our restriction of $\Fr$ as a functor
\begin{equation}\label{FrWeNeed}\Fr:\Ver_{p^n}^r\;\to\; \Ver_{p^{n-1}}.\end{equation}

Although one cannot say $\Fr$ is monoidal anymore, seeing that $\Ver_{p^n}^r \subset \Ver_{p^n}$ is not a monoidal subcategory, we can still state that $\Fr$ is ``locally monoidal'': that is, $$\Fr(X) \otimes \Fr(Y) \cong \Fr(X \otimes Y)$$ for any $X, Y \in \Ver_{p^n}^r$ such that $X\otimes Y \in \Ver_{p^n}^r$, too. This isomorphism descends from the monoidal functor $\Fr$ as in \eqref{OrigFr}, so it is natural in both $X$ and $Y$ (as long as all the relevant tensor products belong to $\Ver_{p^n}^r$). See also \cite[Remark 4.46]{BEO}.

It will be useful to have the following technical lemma. Note that it is a slight generalisation of the observation in \cite[Proposition~5.1]{EO} that $\Fr$ commutes with tensor functors.
\begin{lemma}\label{StupidLemma}
Consider tensor categories $\cC,\cV$ and a topologising subcategory $\cC^0\subset\cC$. Consider an object $X\in\cC$ with $X^{\otimes p}\in\cC^0$ and an exact functor $F:\cC^0\to \cV$ with the property that $F(X^{\otimes p})=Y^{\otimes p}$ for some $Y\in\cV$ such that the diagram
$$\xymatrix{\End_{\cC}(X^{\otimes p})\ar[rr]^F&&\End_{\cV}(Y^{\otimes p})\\
&kS_p\ar[ul]\ar[ur]}$$
is commutative. Then $\Fr(X)\in \cC^0\boxtimes \Ver_p\subset \cC\boxtimes \Ver_p$ and $(F\boxtimes\mathrm{Id})(\Fr(X))\cong \Fr(Y)$.
\end{lemma}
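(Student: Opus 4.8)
The plan is to unwind the definition of the Frobenius functor $\Fr$ and track how it interacts with the functor $F$. Recall from \cite{EO,Ostrik} that $\Fr(X)$ is built as a sub-object of the $p$-th symmetric power $S^pX$ of $X$, constructed inside $X^{\otimes p}$ as the image of a canonical idempotent-type endomorphism built out of the $S_p$-action on $X^{\otimes p}$ and, crucially, a specific element coming from the combinatorial data that produces the $\Ver_p$-component: concretely, $\Fr(X)$ is the image of the morphism $X^{\otimes p}\to X^{\otimes p}$ associated to a certain element of $kS_p$ (this is precisely the categorical repackaging in \cite{EO} of the classical fact stated in the excerpt, that $\Fr$ on $\Rep G$ picks out the span of the $v\otimes\cdots\otimes v$). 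So the first step is: write $\Fr(X)\in\cC\boxtimes\Ver_p$ as the image (inside $(X^{\otimes p},\,\text{sign/regular rep datum})$) of an endomorphism $e_X$ lying in the image of $kS_p\to\End_{\cC}(X^{\otimes p})\boxtimes\End(\text{std objects of }\Ver_p)$.

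Second, since $X^{\otimes p}\in\cC^0$ and $\cC^0$ is topologising (hence closed under subquotients), the image of $e_X$ — call it $\Fr(X)$ — also lies in $\cC^0\boxtimes\Ver_p$; this gives the first assertion. For the second assertion, apply $F\boxtimes\mathrm{Id}$. By exactness of $F$, the functor $F\boxtimes\mathrm{Id}$ is exact on $\cC^0\boxtimes\Ver_p$ and so commutes with taking images of morphisms. Thus $(F\boxtimes\mathrm{Id})(\Fr(X))$ is the image of $(F\boxtimes\mathrm{Id})(e_X)$ acting on $(F\boxtimes\mathrm{Id})(X^{\otimes p},\ldots)=(F(X^{\otimes p}),\ldots)=(Y^{\otimes p},\ldots)$, where we have used the hypothesis $F(X^{\otimes p})\cong Y^{\otimes p}$ together with the fact that the $\Ver_p$-slot is untouched. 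Now the commuting triangle in the hypothesis is exactly what is needed to identify $(F\boxtimes\mathrm{Id})(e_X)$ with the analogous endomorphism $e_Y$ of $Y^{\otimes p}$ built from the same element of $kS_p$: the element of $kS_p$ defining $\Fr$ is universal, so $F$ sends its image in $\End_{\cC}(X^{\otimes p})$ to its image in $\End_{\cV}(Y^{\otimes p})$ precisely because the triangle commutes. Hence the image of $(F\boxtimes\mathrm{Id})(e_X)=e_Y$ is by definition $\Fr(Y)$, giving $(F\boxtimes\mathrm{Id})(\Fr(X))\cong\Fr(Y)$.

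The main obstacle — and the only place real care is needed — is the \emph{compatibility of all the auxiliary isomorphisms}: the isomorphism $F(X^{\otimes p})\cong Y^{\otimes p}$ must be compatible with the $S_p$-actions and with the monoidal structure isomorphisms $c^F$, so that "the image of $e_X$ maps to the image of $e_Y$" is a genuine identification and not merely an abstract isomorphism of the two sub-objects. This is exactly what the commuting triangle hypothesis is designed to encode, so the argument is really just a matter of bookkeeping once one has set up $\Fr$ functorially as an image construction; I would cite \cite[Proposition~5.1]{EO} for the analogous statement about tensor functors and remark that the only modification is that $F$ is defined merely on the topologising subcategory $\cC^0$, which suffices because all objects appearing ($X^{\otimes p}$, its subquotients, and images of maps between them) stay inside $\cC^0$.
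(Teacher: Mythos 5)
Your proof is correct and follows essentially the same route as the paper's (very terse) argument: recall that the components of $\Fr(X)$ are built from $X^{\otimes p}$ using morphisms in the image of $kS_p\to\End(X^{\otimes p})$, use that $\cC^0$ is topologising to get $\Fr(X)\in\cC^0\boxtimes\Ver_p$, and use exactness of $F$ plus the commuting triangle to commute $F\boxtimes\mathrm{Id}$ past the construction. One small imprecision worth flagging: you describe $\Fr(X)$ as \emph{the image} of a single idempotent-type endomorphism, whereas the construction in the cited source produces each $\Ver_p$-graded component of $\Fr(X)$ as a \emph{subquotient} of $X^{\otimes p}$ via kernels and images of $kS_p$-morphisms (not just a single image) --- the paper's proof is careful to phrase it this way. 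This does not harm your argument, since an exact functor commutes with kernels and cokernels just as well as with images and a topologising subcategory is closed under all subquotients, but the simplification is not literally what the construction gives.
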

\begin{proof}
By definition in \cite[\S 3-5]{EO}, every component of $\Fr(X)$ (where the components live in $\cC$ and are labelled by the simple objects in $\Ver_p$) is obtained as a subquotient of $X^{\otimes p}$ via kernels and images of morphisms in the image of $kS_p\to\End(X^{\otimes p})$.
\end{proof}

%Since $\Fr$ is exact in the middle, in the sense of Definition~\ref{Def:Emiddle} below, by \cite[Proposition~3.6]{EO}, the restriction even lands in $\Ver_{p^{n-1}}^r\subset\Ver_{p^{n-1}}$.

\begin{remark}
Recall that $\Fr$ is not $k$-linear. In fact, it follows easily that $\Fr(\lambda f)=\lambda^p\Fr(f)$, for $\lambda\in k$ and $f$ some morphism. It is therefore possible to interpret $\Fr$ as $k$-linear by twisting the $k$-linear structure on the target category via the Frobenius morphism $\lambda\mapsto \lambda^p$ of $k$ (while keeping the same abelian monoidal category), see also \cite[\S 3.3]{Ostrik}. Since we assume that $k$ is algebraically closed and hence perfect, the Frobenius morphism is an isomorphism and in particular this twist does not change the dimensions of the morphism spaces.

Alternatively, again because $k$ is perfect, we can twist the $k$-linear structure on the source category via the inverse of the Frobenius morphism: $\lambda\mapsto \lambda^{1/p}$. That is our preferred method in this paper.
Concretely, we will henceforth `normalise' or `twist' our categories {\em such that $\Ver_{p^{n-1}}\hookrightarrow \Ver_{p^n}$ is not $k$-linear, but \eqref{FrWeNeed} is $k$-linear}. 

Note that any such `twist' of $\Ver_{p^n}$, via $k\to k:\lambda\mapsto \lambda^{p^i}$ for $i\in\mZ$, is (non-canonically) equivalent to the original, for instance because the same is true for $Tilt(SL_2)$ and we can define $\Ver_{p^n}$ as `the' algebraic envelope of a quotient of the former.
\end{remark}

\begin{remark}
We can also define directly the internal Frobenius twist $\Fr_{\mathrm{in}}:\Ver_{p^n}\to\Ver_{p^n}$ from \cite[Definition~4.2.2]{Kevin-tannakian}, which can easily be seen to restrict to the same functor $\Ver_{p^n}^r\to\Ver_{p^n}$ for $r< p^{n-1}$. However, this more direct construction of \eqref{FrWeNeed} has the drawback that $\Fr_{\mathrm{in}}$ is in general not monoidal.
\end{remark}

\subsection{Stabilisation of the categories \texorpdfstring{${\Ver_{p^{n}}^r}$}{Ver}}
We study further the functor in equation~\eqref{FrWeNeed}.
\begin{theorem}\label{thm:Fr_loc_equiv}
The Frobenius twist $\Fr$ restricts to a $k$-linear equivalence $\Fr: {\Ver_{p^{n}}^r} \to {\Ver_{p^{n-1}}^r}$ for $n>4r>0$.
\end{theorem}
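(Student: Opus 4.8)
The plan is to show that the functor $\Fr\colon \Ver_{p^n}^r \to \Ver_{p^{n-1}}^r$ from \eqref{FrWeNeed} is (i) well-defined (i.e. lands in $\Ver_{p^{n-1}}^r$), (ii) essentially surjective, (iii) faithful, and (iv) full; together with additivity this gives the claimed $k$-linear equivalence. The inequality $n > 4r$ is the regime in which all the combinatorial constraints from Section~3.2 apply simultaneously, so each step should ultimately reduce to Corollary~\ref{Lem2} and Lemma~\ref{LemExt10}, possibly together with the ``locally monoidal'' property of $\Fr$.

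First I would check (i) and (ii) on simple objects. By Corollary~\ref{Lem2}(1), the simple objects of $\Ver_{p^n}^r$ are exactly the $L_m^{[n]}$ with $m \le r$, and since $r < p^{n-1}-p^{n-2}$ (which holds a fortiori when $n>4r$) all such $m$ lie in $\Lambda^{[n-1]}$; hence by \eqref{FrSimple}, $\Fr(L_m^{[n]}) \cong L_m^{[n-1]}$, which by Corollary~\ref{Lem2}(1) again is a simple object of $\Ver_{p^{n-1}}^r$. Since every object of $\Ver_{p^n}^r$ is built from these simples by extensions and $\Fr$ is exact, $\Fr$ lands in the Serre closure of the image; to see it actually lands in the topologising subcategory $\Ver_{p^{n-1}}^r$ (subquotients of sums of $(L_1^{[n-1]})^{\otimes j}$, $j\le r$) I would use that $\Fr$ sends a chosen presentation of $X\in\Ver_{p^n}^r$ as a subquotient of $\bigoplus_j (L_1^{[n]})^{\otimes j}$ to a subquotient of $\bigoplus_j \Fr((L_1^{[n]})^{\otimes j})$, and apply the locally monoidal isomorphism $\Fr((L_1^{[n]})^{\otimes j}) \cong \Fr(L_1^{[n]})^{\otimes j} \cong (L_1^{[n-1]})^{\otimes j}$, valid because $(L_1^{[n]})^{\otimes j}\in \Ver_{p^n}^r$ for $j \le r < p^{n-1}$. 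The same computation, run in reverse, gives essential surjectivity: any subquotient of $\bigoplus_{j\le r}(L_1^{[n-1]})^{\otimes j}$ is hit.

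Next, faithfulness and fullness. On morphism spaces it suffices, by additivity and the structure of finite-length objects, to control $\Hom$ and $\Ext^1$ between simples, and ultimately (using rigidity, as in the proof of Theorem~\ref{thm:uni-2}, passing to internal Homs which again lie in $\Ver_{p^n}^r$ provided the relevant tensor powers stay $\le r$ — here $n>4r$ gives the room) it reduces to comparing $\Hom_{\Ver_{p^n}}(\unit, M)$ with $\Hom_{\Ver_{p^{n-1}}}(\unit, \Fr M)$ for $M$ in $\Ver_{p^n}^r$ of bounded ``length''. Faithfulness: $\Fr$ is injective on each $\Hom(L_m^{[n]}, L_{m'}^{[n]})$ because these spaces are at most one-dimensional (the simples are non-isomorphic unless $m=m'$, where the space is $k$ and $\Fr$ is injective since $k$ is perfect and we have normalised so that \eqref{FrWeNeed} is $k$-linear). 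Fullness then follows from a dimension count: $\Ext^1_{\Ver_{p^n}}(L_a^{[n]},\unit)$ and $\Ext^1_{\Ver_{p^{n-1}}}(L_a^{[n-1]},\unit)$ have the same dimension by Lemma~\ref{LemExt10} — for $a \le r < p^{n-2}$ both are zero (the exceptional values $a = p^i + p^{i-1}(p-2) \ge p$ can also occur, but these are $> r$ once $n>4r$... more precisely one must check $p + 0\cdot(p-2) $-type small cases; the point is that the formula in Lemma~\ref{LemExt10} depends on $a$ alone, not on $n$, in the relevant range). Combined with the exactness of $\Fr$, a five-lemma / comparison-of-composition-series argument upgrades the equality of all these $\Hom$ and $\Ext^1$ dimensions between matching simples into full faithfulness on all of $\Ver_{p^n}^r$.

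The main obstacle I expect is making the reduction to internal Hom genuinely work: $\Fr$ is only ``locally monoidal'', so one may only use $\Fr(M^*\otimes N)\cong \Fr(M)^*\otimes\Fr(N)$ when $M^*\otimes N$ still lies in $\Ver_{p^n}^r$, and a priori it lies only in some $\Ver_{p^n}^{r'}$ with $r'$ larger. This is exactly what forces the quadratic-looking hypothesis $n > 4r$ rather than merely $n > r$: one needs enough headroom so that after taking duals and a bounded number of tensor factors everything relevant still sits inside a single $\Ver_{p^n}^{r''}$ with $r'' < p^{n-1}$, so that the locally monoidal structure and \eqref{FrSimple} remain applicable. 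Carefully tracking which ambient $\Ver_{p^n}^{r''}$ each object lives in, and verifying $r'' < p^{n-1}$ throughout, is the delicate bookkeeping step; once that is set up, everything else is a routine consequence of Section~3.2.
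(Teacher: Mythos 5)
Your proposal has the right overall architecture (faithful, exact, full, essentially surjective, all hinging on Corollary~\ref{Lem2}, equation~\eqref{FrSimple}, the locally-monoidal property, and $\Ext^1$ information), and you correctly identify the rigidity trick that reduces fullness to maps out of $\unit$, as the paper also does in Proposition~\ref{prop:Fr_full}. But there are three genuine gaps.

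First, you never establish that $\Fr$ is \emph{exact} on $\Ver_{p^n}^r$, yet you invoke exactness both to conclude well-definedness and in the closing five-lemma step. The paper has to work for this: $\Fr$ is only ``exact in the middle'' a priori (Lemma~\ref{lem:Fr_Emiddle}), and exactness on $\Ver_{p^n}^r$ is obtained from a length-comparison argument (Lemma~\ref{LemTriv}), fed by the identity $\ell(\otimes^i L_1^{[n]})=\ell(\otimes^i L_1^{[n-1]})$ from Corollary~\ref{Lem2}(2). This is the mechanism that also delivers faithfulness on all objects; your argument only gives injectivity on $\Hom$ between simples, which does not extend to general objects without exactness.

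Second, essential surjectivity cannot be obtained by ``running the same computation in reverse.'' Given a subquotient $N$ of $\bigoplus_j(L_1^{[n-1]})^{\otimes j}$, to lift $N$ along $\Fr$ you need to pull back the subobject and quotient data, which requires fullness and a delicate induction on length; this is precisely what the paper's Proposition~\ref{prop:func_ess_surj} supplies (via the closure of the essential image under kernels and cokernels of full exact functors). Without it, essential surjectivity is unproven.

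Third, the fullness step needs more than a count of $\Ext^1$ dimensions. The exact sequence of $\Hom$'s forces you to show that $\Fr$ is a \emph{monomorphism} on $\Ext^1_{\Ver_{p^n}^{s}}(L,\unit)\to\Ext^1_{\Ver_{p^{n-1}}^{s}}(\Fr L,\unit)$, not merely that both sides have the same dimension in the ambient categories. In particular, when $a=p^i+p^{i-1}(p-2)\le r$ (which does occur for small $i$ whenever $r\ge 2p-2$; it has nothing to do with $n>4r$), you must check that the unique non-split extension actually lives in the small subcategory $\Ver_{p^n}^{s}$ and is not split by $\Fr$. This is the content of the paper's Lemma~\ref{Lem:Ext}, which exhibits the extension inside the tilting-type object $\mT^{[j+1]}_{2p-2}$ and tracks it through $\Fr$; your proposal elides this.
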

\begin{proof}
 The proof will be done in several steps and for the sharper bound $2r<p^{n-1}-p^{n-2}$. The bound in the theorem follows since $n$ is always bounded by $2(p^n-p^{n-1})$. We will prove in Corollary \ref{cor:Fr_exact_faithful} that $\Fr: {\Ver_{p^{n}}^r} \to {\Ver_{p^{n-1}}}$ is faithful {and its image lands in ${\Ver_{p^{n-1}}^r}$}; in Proposition \ref{prop:Fr_full} we prove that it is full, and in Corollary \ref{cor:Fr_ess_surj} {we prove that $\Fr: {\Ver_{p^{n}}^r} \to {\Ver_{p^{n-1}}^r}$ is essentially surjective}.
\end{proof}

% describing the effect of $\Fr$ on simple objects.

%\begin{lemma}\label{lem:Fr_simples}
%We have $\Fr(X_k)=X_{k-1}$ if $k>1$. 
% More generally, for any simple object $X_S = \bigotimes_{i \in S} X_{i}$ with $S\subset [\![2,n]\!]$, we have $\Fr(X_S) = X_{S'}$. Equivalently, we have $\Fr(X(n;m))=X(n-1;m)$ if $m< 2^{n-1}$.
%\end{lemma}
%\begin{proof}
%This follows immediately from Theorem~2.1(xi) in \cite{BE} and the fact that $\Fr$ is monoidal in characteristic $2$.
%\end{proof}

We begin by showing that $\Fr:{\Ver_{p^{n}}^r} \to {\Ver_{p^{n-1}}}$ is faithful and exact. 
\begin{definition}\label{Def:Emiddle}
 We say that a functor $F$ between abelian categories is ``exact in the middle'' if it maps every short exact sequence $X\hookrightarrow Y\tto Z$ to an exact sequence $F(X)\to F(Y)\to F(Z)$.
\end{definition}

The following result has been proved in \cite[Proposition~3.6]{EO}:
\begin{lemma}\label{lem:Fr_Emiddle}
 The functor $\Fr$ is exact in the middle.
\end{lemma}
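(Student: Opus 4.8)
The plan is to follow \cite[Proposition~3.6]{EO}; here is the strategy. First, recall the construction of $\Fr$ from \cite[\S3]{EO}: for $X\in\cC$ the object $X^{\otimes p}$ carries the cyclic action of $C_p=\mZ/p\mZ$ rotating the tensor factors, and --- writing $\sigma$ for a generator and $a=\sigma-\id$, so that $a^p=\sigma^p-\id=0$ in characteristic $p$ --- one has a decomposition
$$\Fr(X)\;=\;\bigoplus_{i=1}^{p-1}\Fr_i(X)\boxtimes L_i,$$
where the $L_i$ are the simple objects of $\Ver_p$ and each $\Fr_i(X)\in\cC$ is the subquotient of $X^{\otimes p}$ cut out by kernels and images of the iterates $a,a^2,\dots,a^{p-1}$ that records the multiplicity of the $i$-dimensional indecomposable $C_p$-module (compare the proof of Lemma~\ref{StupidLemma}). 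Since $\Ver_p$ is semisimple, a sequence in $\cC\boxtimes\Ver_p$ is exact precisely when each of its $\boxtimes L_i$-components is exact in $\cC$, so it suffices to prove that $\Fr_i$ maps every short exact sequence $X\hookrightarrow Y\tto Z$ in $\cC$ to a sequence $\Fr_i(X)\to\Fr_i(Y)\to\Fr_i(Z)$ exact at the middle term.

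The main tool will be the canonical $C_p$-equivariant filtration
$$Y^{\otimes p}\;=\;W_0\;\supseteq\;W_1\;\supseteq\;\cdots\;\supseteq\;W_p\;=\;X^{\otimes p}\;\supseteq\;0,$$
in which $W_j\subseteq Y^{\otimes p}$ is the subobject generated by the tensor words with at least $j$ letters in the image of $X\hookrightarrow Y$. Using bi-exactness of $\otimes$ one identifies the outer layers $W_0/W_1\cong Z^{\otimes p}$ and $W_p\cong X^{\otimes p}$, while for $1\le j\le p-1$ the layer $W_j/W_{j+1}$ is the direct sum, over the $j$-element sets $S$ of positions, of the ``mixed'' objects carrying a factor of $X$ in the positions of $S$ and of $Z$ in the others, with $C_p$ permuting these summands via its action on $j$-subsets. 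That action is \emph{free} for $0<j<p$, so each intermediate layer $W_j/W_{j+1}$ is a $C_p$-free object.

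The crux is that the functors $\Fr_i$ ($1\le i\le p-1$) vanish on $C_p$-free objects: such an object is a sum of copies of the regular representation of $C_p$, namely the $p$-dimensional indecomposable, which has categorical dimension $p=0$, is negligible, and hence becomes zero in $\Ver_p$ --- so it has no $i$-dimensional indecomposable summand for $i<p$ and $\Fr_i$ kills it. Consequently the subquotient recipe defining $\Fr_i$, applied along the filtration $W_\bullet$ of $Y^{\otimes p}$, sees only the top quotient $Z^{\otimes p}$ and the bottom subobject $X^{\otimes p}$; a direct diagram chase then squeezes $\Fr_i(Y)$ between $\Fr_i(X)$ and $\Fr_i(Z)$ in exactly the pattern forcing $\Fr_i(X)\to\Fr_i(Y)\to\Fr_i(Z)$ to be exact in the middle.

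The genuinely delicate point --- and the reason one obtains \emph{only} exactness in the middle --- is this last assembly: since $(-)^{\otimes p}$ is neither left nor right exact, the failure of exactness at the two ends cannot be repaired, and one must track carefully how the kernel-and-image recipe for $\Fr_i$ interacts with the filtration $W_\bullet$ and with the non-exact maps $X^{\otimes p}\to Y^{\otimes p}\to Z^{\otimes p}$. This bookkeeping is carried out in \cite[Proposition~3.6]{EO}.
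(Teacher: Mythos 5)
The paper does not give a proof of this lemma at all: it simply records that the statement is Proposition~3.6 of Etingof--Ostrik \cite{EO} and moves on. Your proposal acknowledges this and goes further by reconstructing the argument behind that citation, so the ``comparison'' is really between your sketch and the proof in \cite{EO}, not between your sketch and anything in the present paper.

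That said, your reconstruction is a faithful account of the mechanism used in \cite[Proposition~3.6]{EO}: decomposing $\Fr$ into the components $\Fr_i$ indexed by the simple objects of the semisimple category $\Ver_p$, equipping $Y^{\otimes p}$ with the $C_p$-equivariant filtration $W_\bullet$ interpolating between $X^{\otimes p}$ and $Z^{\otimes p}$, observing that the intermediate layers $W_j/W_{j+1}$ ($0<j<p$) are $C_p$-free because $p$ is prime and $C_p$ permutes the $j$-element position sets freely, and noting that $C_p$-free objects are killed by each $\Fr_i$ with $i<p$ since the regular $kC_p$-module is negligible. You also flag correctly that the remaining step --- propagating these facts through the subquotient recipe for $\Fr_i$ to deduce exactness precisely at the middle term and nowhere else --- is the delicate bookkeeping that \cite{EO} carries out. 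In short: your proposal and the paper ultimately appeal to the same source; your added sketch of that source's argument is correct in its main steps.
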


We denote the length of an object $X$ in an abelian category by $\ell(X)\in\mN\cup\{\infty\}$.

\begin{lemma}\label{LemTriv}
Consider two abelian categories $\cA,\cB$, an additive functor $F:\cA\to\cB$ which is exact in the middle, and a set $\{C_i\,|\,i\in I\}$ of objects in $\cA$ of finite length. If $F$ sends simple constituents of the $C_i$ to simple objects and $\ell(F(C_i))=\ell(C_i)$ for every $i\in I$, then $F|_{\cC}:\cC\to\cB$ is exact and faithful, with $\cC$ the topologising subcategory of $\cA$ generated by $\{C_i\}$. 
\end{lemma}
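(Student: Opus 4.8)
The claim is a soft structural statement: a middle-exact additive functor that preserves length and simplicity on a generating family extends to an exact, faithful functor on the topologising subcategory those objects generate. I would organise the argument around two elementary observations about middle-exact functors, then bootstrap from the generators to all of $\cC$.

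First I would record the basic properties of a middle-exact additive functor $F$. For a short exact sequence $X\hookrightarrow Y\tto Z$ in $\cA$, middle-exactness gives exactness of $F(X)\to F(Y)\to F(Z)$, so in particular $\ell(F(Y))\le \ell(F(X))+\ell(F(Z))$; and because $F$ is additive it commutes with finite direct sums. From this I would deduce the key \emph{length inequality}: for any object $W\in\cA$ of finite length $n$ with a composition series, iterating middle-exactness along the series yields $\ell(F(W))\le \ell(W)$, with equality precisely when $F$ maps each of the short exact sequences in the series to a genuine short exact sequence (i.e.\ $F(X)\to F(Y)$ is mono \emph{and} $F(Y)\to F(Z)$ is epi). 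This is the engine of the whole proof.

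Next I would show that $F$ restricted to $\cC$ sends short exact sequences to short exact sequences (hence is exact). Every object $W\in\cC$ is a subquotient of a finite direct sum of the $C_i$, so its composition factors lie among the composition factors of the $C_i$; by hypothesis $F$ sends each such factor to a simple object, so $\ell(F(S))=1=\ell(S)$ for every simple constituent $S$ of any object of $\cC$. Now given a short exact sequence $X\hookrightarrow Y\tto Z$ in $\cC$, I want $\ell(F(Y))=\ell(F(X))+\ell(F(Z))$; granted that, middle-exactness forces the sequence $0\to F(X)\to F(Y)\to F(Z)\to 0$ to be exact (the map $F(X)\to F(Y)$ must be mono and $F(Y)\to F(Z)$ epi, else the length drops). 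To get the length additivity I would argue that $\ell(F(C_i))=\ell(C_i)$ together with the equality case of the length inequality forces $F$ to preserve exactness of \emph{every} short exact sequence built from composition-series inclusions inside the $C_i$; then a diagram-chase / dévissage shows the same for arbitrary short exact sequences of subquotients, i.e.\ throughout $\cC$. Faithfulness is then immediate: if $f:W\to W'$ in $\cC$ has $F(f)=0$, factor $f$ as $W\tto \mathrm{im}(f)\hookrightarrow W'$; exactness of $F$ on $\cC$ gives $F(\mathrm{im}\,f)=\mathrm{im}(F(f))=0$, and since $F$ preserves length on $\cC$ (equality case, now available everywhere) $\ell(\mathrm{im}\,f)=\ell(F(\mathrm{im}\,f))=0$, so $\mathrm{im}\,f=0$ and $f=0$.

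The main obstacle is the middle step: upgrading ``$\ell(F(C_i))=\ell(C_i)$ for the generators'' to ``$F$ preserves length of every object of $\cC$'' and, simultaneously, ``$F$ is exact on $\cC$''. The subtlety is that a subobject or quotient $W$ of $C_i$ has $\ell(F(W))\le \ell(W)$ but a priori the inequality could be strict while still summing correctly for $C_i$ itself — so I would need to run the length-additivity argument carefully along the short exact sequence $W'\hookrightarrow C_i\tto C_i/W'$ and use that $\ell(F(C_i))=\ell(C_i)$ is already maximal, which pins down $\ell(F(W'))=\ell(W')$ and $\ell(F(C_i/W'))=\ell(C_i/W')$, hence propagates equality (and exactness) downward to all subquotients, then to extensions of subquotients, i.e.\ to all of $\cC$. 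Everything else is formal.
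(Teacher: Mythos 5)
Your proof is correct and follows essentially the same route as the paper's: a length inequality $\ell(F(M)) \le \ell(M)$ on $\cC$ coming from middle-exactness plus the simple-constituent hypothesis, a length count for direct sums of the generators $C_i$ that forces length preservation and exactness on their subquotients, then d\'evissage to all of $\cC$, and finally faithfulness from exactness together with length preservation. The only small imprecision is your opening phrasing of the length inequality for arbitrary $W\in\cA$, which actually requires the additional hypothesis that $F$ sends the simple constituents of $W$ to simple objects --- but you supply exactly this clause when applying it inside $\cC$, so nothing is lost.
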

\begin{proof}
The assumption on the action of $F$ on simple objects and the exactness in the middle imply that $\ell(F(M))\le \ell(M)$, for each $M\in\cC$. Now let $C$ be a finite direct sum of objects $C_i$, and consider a short exact sequence
$$0\to X\to C\to Y\to 0$$
in $\cA$ (and hence in $\cC$). By using $\ell(FC)=\ell(C)=\ell(X)+\ell(Y)$ and exactness in the middle, we find
$$\ell(FX)+\ell(FY)\ge \ell(X)+\ell(Y).$$
Since we have already established that $F$ can only decrease the length of objects in $\cC$ we find that $F$ preserves the length of each subobject or quotient of an object $C$ as above. It now also follows easily that the action of $F$ on the short exact sequence must be exact.

 We can now proceed in the exact same way (first by replacing $C$ by an arbitrary subobject and in a second step by an arbitrary subquotient) to prove that $F$ acts exactly on each short exact sequence in $\cC$. That $F$ is faithful on $\cC$ then follows from the fact it is exact and preserves length. 
\end{proof}

\begin{corollary}\label{cor:Fr_exact_faithful}
If $0<r<p^{n-1}-p^{n-2}$, the Frobenius twist $\Fr: {\Ver_{p^{n}}^r}\to{\Ver_{p^{n-1}}}$ is
an exact faithful functor, and its image sits in ${\Ver_{p^{n-1}}^r}$.
\end{corollary}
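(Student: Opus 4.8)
The plan is to deduce this from the abstract criterion in Lemma~\ref{LemTriv}, applied to the functor $F = \Fr\colon \Ver_{p^n}\to \Ver_{p^n}\boxtimes\Ver_p$ restricted to the topologising subcategory $\cC = \Ver_{p^n}^r$. The subcategory $\Ver_{p^n}^r$ is by definition generated, as a topologising subcategory, by the objects $C_i = (L_1^{[n]})^{\otimes i}$ with $i\le r$, so these will play the role of the $\{C_i\}$ in Lemma~\ref{LemTriv}. The exactness in the middle of $\Fr$ is supplied by Lemma~\ref{lem:Fr_Emiddle}. So there are exactly two things to check: first, that $\Fr$ sends the simple constituents of the $C_i$ to simple objects, and second, that $\ell(\Fr(C_i)) = \ell(C_i)$ for every $i\le r$.

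For the first point, recall from Corollary~\ref{Lem2}(1) that the simple constituents of $(L_1^{[n]})^{\otimes i}$ for $i\le r < p^{n-1}-p^{n-2}$ are exactly the $L_m^{[n]}$ with $m\le r$, all of which lie in $\Lambda^{[n-1]}$; by \eqref{FrSimple} we have $\Fr(L_m^{[n]})\cong L_m^{[n-1]}$, a simple object of $\Ver_{p^{n-1}}\subset\Ver_{p^n}$ (viewed inside $\Ver_{p^n}\boxtimes\Ve$). For the length computation I would compare two Grothendieck-group identities: on the one hand, the multiplicity of $L_b^{[n]}$ in $(L_1^{[n]})^{\otimes i}$ is computed by Corollary~\ref{Lem2}(2), which for $i\le r<p^{n-1}-p^{n-2}$ says it vanishes unless $b\in\Lambda^{[n-1]}$ and otherwise equals $[\otimes^i L_1^{[n-1]}:L_b^{[n-1]}]$; on the other hand, $\Fr$ is locally monoidal on $\Ver_{p^n}^r$ (as noted after \eqref{FrWeNeed}, since $(L_1^{[n]})^{\otimes i}\in\Ver_{p^n}^r$ for $i\le r$), so $\Fr((L_1^{[n]})^{\otimes i})\cong (\Fr L_1^{[n]})^{\otimes i}\cong (L_1^{[n-1]})^{\otimes i}$, whose length is $\sum_b [\otimes^i L_1^{[n-1]}:L_b^{[n-1]}]$. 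Since $\Fr$ is exact in the middle and kills nothing on simples in this range, $\ell(\Fr(C_i))\le \ell(C_i)$ a priori; combining this with the two displayed multiplicity formulas shows the two lengths agree. Hence Lemma~\ref{LemTriv} applies and gives that $\Fr|_{\Ver_{p^n}^r}$ is exact and faithful.

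Finally, for the statement that the image lands in $\Ver_{p^{n-1}}^r$: since $\Fr$ is now exact on $\Ver_{p^n}^r$ and sends each generating object $(L_1^{[n]})^{\otimes i}$ ($i\le r$) to $(L_1^{[n-1]})^{\otimes i}$, which lies in $\Ver_{p^{n-1}}^r$ by the very definition of the latter, it sends every subquotient of a direct sum of such objects to a subquotient of a direct sum of the $(L_1^{[n-1]})^{\otimes i}$, $i\le r$; that is precisely an object of $\Ver_{p^{n-1}}^r$. The main obstacle is the length bookkeeping in the second paragraph: one must be careful that the bound $r<p^{n-1}-p^{n-2}$ is exactly what makes Corollary~\ref{Lem2}(2) and the local monoidality of $\Fr$ apply simultaneously to all tensor powers up to the $r$-th, so that the Grothendieck-ring computation downstairs in $\Ver_{p^{n-1}}$ matches the one upstairs termwise and no simple constituent is accidentally merged or lost.
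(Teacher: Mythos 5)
Your proof is correct and follows essentially the same route as the paper's: verify the hypotheses of Lemma~\ref{LemTriv} for the generators $C_i=(L_1^{[n]})^{\otimes i}$ using the local monoidality of $\Fr$, equation~\eqref{FrSimple}, and the Grothendieck-group comparison from Corollary~\ref{Lem2}(2), then deduce the claim about the image from exactness and the identification $\Fr((L_1^{[n]})^{\otimes i})\cong(L_1^{[n-1]})^{\otimes i}$. You are in fact slightly more explicit than the paper in checking the ``simple constituents map to simple objects'' hypothesis of Lemma~\ref{LemTriv} (which the paper leaves implicit via~\eqref{FrSimple} and Corollary~\ref{Lem2}(1)); the ``a priori $\ell(\Fr C_i)\le\ell(C_i)$'' remark is harmless but not needed once you have identified $\Fr(C_i)$ explicitly.
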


\begin{proof}
By equation~\eqref{FrSimple} and the fact that {$\Fr$ is ``locally monoidal'' as explained in Section \ref{ssec:Fr}, we have, for every $0 \leq i \leq r$, that $\Fr(\otimes^i L_1^{[n]})\cong \otimes^i L_1^{[n-1]}$. Furthermore, by Lemma \ref{lem:Fr_Emiddle}, $\Fr$ is exact in the middle.} By Lemma~\ref{LemTriv}, in order to show that $\Fr$ is exact and faithful, it thus suffices to show that
$$\ell(\otimes^i L_1^{[n]})\,=\,\ell(\otimes^i L_1^{[n-1]}),\qquad\mbox{for }0\le  i \leq r.$$
This is a direct consequence of Corollary~\ref{Lem2}(2).

{Having shown that $\Fr$ is exact, and using the fact that $\Fr(\otimes^i L_1^{[n]})\cong \otimes^i L_1^{[n-1]}$, we immediately conclude that the image of $\Fr$ sits in ${\Ver_{p^{n-1}}^r}$.}
\end{proof}

We will need the following auxiliary result.
\begin{lemma}\label{Lem:Ext} Consider $0<s<{p^{n-1} - p^{n-2}}$ and let $L$ be a simple object in ${\Ver_{p^{n}}^{s}}$. Then $\Fr$ induces an isomorphism
$$\Ext^1_{{\Ver_{p^{n}}^{s}}}(L,\unit)\stackrel{\sim}{\to}\Ext^1_{{\Ver_{p^{n-1}}^s}}(\Fr L,\unit).$$
\end{lemma}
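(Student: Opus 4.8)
The strategy is to compute both $\Ext^1$ groups explicitly and observe they have the same dimension, then produce a map between them that is necessarily an isomorphism for dimension reasons. The key inputs are: (a) the classification of first extensions of simples by $\unit$ in $\Ver_{p^n}$ provided by Lemma~\ref{LemExt10}; (b) the fact, from Corollary~\ref{Cor:Simp}(1), that the simple objects of $\Ver_{p^n}^s$ are exactly the $L_m^{[n]}$ with $m\le s$; and (c) the correspondence $L_a^{[n-1]}=L_{pa}^{[n]}$ together with the fact (Section~\ref{SecC2n}) that $\Ver_{p^{n-1}}$ is a Serre subcategory of $\Ver_{p^n}$.

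First I would reduce the statement about $\Ext^1$ in the topologising subcategory $\Ver_{p^n}^s$ to $\Ext^1$ in the full category $\Ver_{p^n}$. An extension $0\to\unit\to E\to L\to 0$ with $L$ simple in $\Ver_{p^n}^s$ automatically has $E\in\Ver_{p^n}^s$, since $\Ver_{p^n}^s$ is closed under extensions (it is topologising and, for $s<p^{n-1}-p^{n-2}$, actually a Serre subcategory — or one argues directly that the two constituents of $E$ lie in $\Ver_{p^n}^s$, which by Corollary~\ref{Cor:Simp}(1) forces $E\in\Ver_{p^n}^s$). Hence $\Ext^1_{\Ver_{p^n}^s}(L,\unit)=\Ext^1_{\Ver_{p^n}}(L,\unit)$, and likewise $\Ext^1_{\Ver_{p^{n-1}}^s}(\Fr L,\unit)=\Ext^1_{\Ver_{p^{n-1}}}(\Fr L,\unit)$. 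Next I would apply Lemma~\ref{LemExt10} to both sides. For $L=L_a^{[n]}$ with $a\le s<p^{n-1}-p^{n-2}$, the condition $a=p^i+p^{i-1}(p-2)$ for some $1\le i\le n-1$ can only be met for small $i$ (namely $i\le ?$ with $p^i+p^{i-1}(p-2)\le s$), and crucially the same arithmetic condition on $a$ describes $\Ext^1_{\Ver_{p^{n-1}}}(L_a^{[n-1]},\unit)$ with range $1\le i\le n-2$; since $a\le s<p^{n-1}-p^{n-2}$ rules out $i=n-1$ anyway, the two sets of admissible $i$ coincide. Thus both $\Ext^1$ spaces have the same dimension (either $0$ or $1$, depending only on $a$).

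Finally I would exhibit the map. Since $\Fr$ is exact in the middle (Lemma~\ref{lem:Fr_Emiddle}) and $\Fr\unit\cong\unit$, applying $\Fr$ to an extension $0\to\unit\to E\to L\to 0$ in $\Ver_{p^n}^s$ gives a complex $\unit\to\Fr E\to\Fr L$; by Corollary~\ref{cor:Fr_exact_faithful}, $\Fr$ is actually exact on $\Ver_{p^n}^s$ (not merely in the middle), so this is a short exact sequence $0\to\unit\to\Fr E\to\Fr L\to 0$ in $\Ver_{p^{n-1}}^s$, yielding a well-defined map $\Ext^1_{\Ver_{p^n}^s}(L,\unit)\to\Ext^1_{\Ver_{p^{n-1}}^s}(\Fr L,\unit)$. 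I would check injectivity: if $\Fr E$ splits, then $\Fr E\cong\unit\oplus\Fr L$, and one deduces from faithfulness and exactness of $\Fr$ (so it preserves lengths on $\Ver_{p^n}^s$, hence reflects the property that the inclusion $\unit\hookrightarrow E$ admits a retraction) that $E$ splits — concretely, a splitting $\Fr L\to\Fr E$ of $\Fr$ applied to $E\to L$, using fullness of $\Fr$ (Proposition~\ref{prop:Fr_full}, valid in this range) lifts to a morphism $L\to E$ whose composite with $E\to L$ is an isomorphism of the simple $L$, hence is itself a splitting. Once injectivity is established, equality of dimensions forces the map to be an isomorphism.

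The main obstacle is the last step — verifying injectivity of the induced map on $\Ext^1$ rather than merely matching dimensions. Matching dimensions via Lemma~\ref{LemExt10} is a bookkeeping exercise in $p$-adic arithmetic, but showing that a nonsplit extension cannot become split after applying $\Fr$ requires genuinely using that $\Fr$ is \emph{exact and faithful} on $\Ver_{p^n}^s$ together with \emph{fullness}, to promote a hypothetical splitting of $\Fr E$ to one of $E$; the delicate point is that $\Fr$ need not be monoidal on $\Ver_{p^n}^s$, so one must work with the bare exact-functor structure and the Hom-isomorphisms of Proposition~\ref{prop:Fr_full}, applied to the objects $\unit$, $L$, $E$ which all lie in $\Ver_{p^n}^s$ (using $s<p^{n-1}-p^{n-2}$ to stay inside the good range).
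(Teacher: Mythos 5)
Your proposal has the right overall shape — match dimensions via Lemma~\ref{LemExt10}, then show the $\Fr$-induced map is injective — and this is indeed what the paper does. However, there are two genuine gaps, one of them circular, that the paper's proof is specifically designed to avoid.

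First, the reduction $\Ext^1_{\Ver_{p^n}^s}(L,\unit)=\Ext^1_{\Ver_{p^n}}(L,\unit)$ is not justified. The category $\Ver_{p^n}^s$ is \emph{topologising} (closed under subquotients and finite direct sums) but is explicitly \emph{not} a Serre subcategory: the paper remarks in Section~\ref{SecC2n} that the Serre closure of $\Ver_{p^n}^s$ is in general strictly larger, and identifies it with a different category from \cite{BEO}. Your parenthetical fallback --- ``the two constituents of $E$ lie in $\Ver_{p^n}^s$, which by Corollary~\ref{Cor:Simp}(1) forces $E\in\Ver_{p^n}^s$'' --- is precisely the (false in general) claim that $\Ver_{p^n}^s$ is closed under extensions. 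What one must show is that the \emph{unique nonsplit extension} $E$ of $L_a^{[n]}$ by $\unit$ happens to lie in $\Ver_{p^n}^s$; this is part (a) of the paper's proof, and it is established by an explicit realization: $E$ sits inside the tilting module $\mathbb{T}^{[j+1]}_{2p-2}$, which is a direct summand of $(\mathbb{T}^{[j+1]}_{p-1})^{\otimes 2}$, and the bound $a\le s$ places the tensor square in $\Ver_{p^n}^s$. Dimension-matching alone cannot substitute for this.

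Second, and more seriously, your injectivity argument is circular. You invoke fullness of $\Fr$ (Proposition~\ref{prop:Fr_full}) to lift a splitting of $\Fr E\to\Fr L$ to a splitting of $E\to L$. But Proposition~\ref{prop:Fr_full} is proved \emph{after} Lemma~\ref{Lem:Ext}, and its proof uses Lemma~\ref{Lem:Ext} (applied with $s=2r$) in the induction step to get a monomorphism on $\Ext^1$. So you cannot use fullness here without a vicious circle. (There is also a range mismatch: fullness is only established for $2s<p^{n-1}-p^{n-2}$, while the lemma assumes only $s<p^{n-1}-p^{n-2}$.) The paper sidesteps this entirely by a direct computation: having realized $E$ inside $\mathbb{T}^{[j+1]}_{2p-2}$, one shows via local monoidality of $\Fr$ and \cite[Proposition~4.49]{BEO} that $\Fr(\mathbb{T}^{[j+1]}_{2p-2})=\mathbb{T}^{[j]}_{2p-2}$, a specific non-split object, whence $\Fr E$ is non-split. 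No fullness is needed. Your dimension bookkeeping and identification of the relevant $a$'s via Lemma~\ref{LemExt10} are fine; it is the two structural steps — placing $E$ in $\Ver_{p^n}^s$ and showing $\Fr E$ does not split — that require the explicit tilting-module realization the paper supplies.
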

\begin{proof}
{By Corollary \ref{Cor:Simp}(1), we have: 
$L = L_a^{[n]}$ for some $a \leq s <p^{n-1} - p^{n-2}$. This implies that $a \in \Lambda^{[n-1]}$, so $\Fr(L_a^{[n]}) = L_a^{[n-1]}$ by Equation~\eqref{FrSimple}. 

By Lemma~\ref{LemExt10}, we have
$$\Ext^1_{{\Ver_{p^n}}}(L_a^{[n]},\unit)\cong \Ext^1_{{\Ver_{p^{n-1}}}}(\Fr L_a^{[n-1]},\unit).$$

Furthermore, Lemma~\ref{LemExt10} and equation~\eqref{LabelSimp} show that the above $\Ext^1$ spaces are non-zero precisely when $L_a^{[n]} = \mathbb{T}^{[j]}_1 \otimes \mathbb{T}^{[j+1]}_{p-2}$ for some $2 \leq j \leq n-1$. In that case, the $\Ext^1$ spaces are $1$-dimensional.
Of course, since $a\leq s$ we have: $p^{n-j-1}(2p-2) \leq s$. 

It therefore suffices to prove that any extension in  ${\Ver_{p^{n}}}$ of $\unit$ and $L_{a}^{[n]} = \mathbb{T}^{[j]}_1 \otimes \mathbb{T}^{[j+1]}_{p-2}$,
\begin{itemize}
\item [(a)] exists in ${\Ver_{p^{n}}^{s}}$;
\item [(b)] is not split by $\Fr$.
\end{itemize}

We let $E$ be the unique (up to isomorphism) non-split extension of $\unit$ and $\mathbb{T}^{[j]}_1 \otimes \mathbb{T}^{[j+1]}_{p-2}$ in $\Ver_{p^{n}}$.  
In \cite[Example 4.47]{BEO}, it is shown that the object $\mathbb{T}^{[j+1]}_{2p-2}$ in $\Ver_{p^{n}}$ has simple constituents $\unit, \mathbb{T}^{[j]}_1 \otimes \mathbb{T}^{[j+1]}_{p-2}, \unit$, with $\unit$ appearing both as a subobject and a quotient. 
Since
$$\Hom_{SL_2}(k, T_{2p-2})\cong k\cong \Hom_{SL_2}(T_{2p-2},k),$$
see for instance \cite[Lemma~5.3.3]{Selecta}, it follows that $\unit$ appears at most once in the top and socle of  $\mathbb{T}^{[j+1]}_{2p-2}$. Furthermore, $\Ext^1_{{\Ver_{p^{n}}}}(\unit, \unit)=0$, so we conclude that 
$soc(\mathbb{T}^{[j+1]}_{2p-2})=\unit$ and $\mathbb{T}^{[j]}_1 \otimes \mathbb{T}^{[j+1]}_{p-2}$ constitutes the next layer of the socle filtration.

Hence $E$ is a subobject of $\mathbb{T}^{[j+1]}_{2p-2}$ in $\Ver_{p^{n}}$. The object $\mathbb{T}^{[j+1]}_{2p-2}$ is a direct summand in $\left(L^{[n]}_{(p-1)p^{n-j-1}}\right)^{\otimes 2} = \left(\mathbb{T}^{[j+1]}_{p-1}\right)^{\otimes 2}$. The requirement $a\leq s$ implies that $2p^{n-j-1}(p-1) \leq s$, so $L^{[n]}_{(p-1)p^{n-j-1}} \in \Ver^{\lfloor s/2 \rfloor}_{p^n}$ and hence $ \left(L^{[n]}_{(p-1)p^{n-j-1}}\right)^{\otimes 2} \in\Ver_{p^{n}}^{s}$. Thus both $\mathbb{T}^{[j+1]}_{2p-2}$ and $E$ are in $\Ver_{p^{n}}^{s}$.

The functor $$\Fr: {\Ver_{p^{n}}^{s}} \to  {\Ver_{p^{n-1}}^{s}}$$ is exact, and locally monoidal so $\Fr\left(\left(\mathbb{T}^{[j+1]}_{p-1}\right)^{\otimes 2}\right) = \left(\mathbb{T}^{[j]}_{p-1}\right)^{\otimes 2}$, by equation~\eqref{FrSimple}. Thus $\Fr(\mathbb{T}^{[j+1]}_{2p-2})$ is a direct summand of $\left(\mathbb{T}^{[j]}_{p-1}\right)^{\otimes 2} \in {\Ver_{p^{n-1}}^{s}}$ with same composition factors as $\mathbb{T}^{[j]}_{2p-2} \in {\Ver_{p^{n-1}}^{s}}$, by equation~\eqref{FrSimple}. By \cite[Proposition 4.49]{BEO} the only summand of $\left(\mathbb{T}^{[j+1]}_{p-1}\right)^{\otimes 2} \in {\Ver_{p^{n-1}}^{s}}$ with $\unit$ in the socle is $\mathbb{T}^{[j]}_{2p-2}$, hence $\Fr(\mathbb{T}^{[j]}_{2p-2})=\mathbb{T}^{[j]}_{2p-2}$.} 

This implies in particular that $\Fr(E)$ does not split.\end{proof}

We now prove that $\Fr$ is full.

\begin{prop}\label{prop:Fr_full} 
 If $0<2r<{p^{n-1}-p^{n-2}}$, the Frobenius twist $\Fr: {\Ver_{p^{n}}^r}\to{\Ver_{p^{n-1}}}$ is full.
\end{prop}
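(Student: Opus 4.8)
The plan is to show that $\Fr$ induces an isomorphism on all $\Hom$-spaces $\Hom_{\Ver_{p^n}^r}(X,Y)\to \Hom_{\Ver_{p^{n-1}}}(\Fr X,\Fr Y)$. Since $\Fr$ is exact and faithful by Corollary~\ref{cor:Fr_exact_faithful} and lands in $\Ver_{p^{n-1}}^r$, a standard dévissage along the finite-length filtrations of $X$ and $Y$ reduces the problem — but one must be careful, because $\Fr$ is only \emph{locally} monoidal and $\Ver_{p^n}^r$ is not closed under duals, so the usual rigidity trick $\Hom(X,Y)\cong\Hom(\unit, X^*\otimes Y)$ is not directly available inside $\Ver_{p^n}^r$. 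The key observation is that one \emph{can} still run this trick one step at a time: every object of $\Ver_{p^n}^r$ is a subquotient of a sum of tensor powers $(L_1^{[n]})^{\otimes j}$ with $j\le r$, and $\Fr$ does commute with tensoring by $L_1^{[n]}$ (and with its dual, since $L_1^{[n]}$ is a summand of $(L_1^{[n]})^{\otimes 2}$, still in the relevant range). So the first step is to upgrade the question about $\Hom$-spaces to a question about $\Hom(\unit,-)$-spaces: using that $2r<p^{n-1}-p^{n-2}$, for $X\in\Ver_{p^n}^r$ the object $X^*\otimes X$-type combinations we need land in $\Ver_{p^n}^{2r}$-or-smaller, where $\Fr$ is still exact and faithful, reducing fullness to surjectivity of $\Fr:\Hom_{\Ver_{p^n}^{\le 2r}}(\unit, M)\to\Hom_{\Ver_{p^{n-1}}}(\unit,\Fr M)$ for suitable $M$.

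\textbf{Main reduction.} So the core claim to establish is: for every $M$ in $\Ver_{p^n}^{s}$ with $0<2s<p^{n-1}-p^{n-2}$ (we will use this with $s$ up to about $2r$), the map $\Fr:\Hom_{\Ver_{p^n}^s}(\unit,M)\to \Hom_{\Ver_{p^{n-1}}}(\unit,\Fr M)$ is an isomorphism. Injectivity is immediate from faithfulness. For surjectivity I would induct on the length $\ell(M)$. If $M$ is simple, then either $M=\unit$, where the claim is clear (both sides are $1$-dimensional and $\Fr$ is unital), or $M\not\cong\unit$ and $\Hom(\unit,M)=0$ while $\Fr M$ is simple and not $\unit$ (by \eqref{FrSimple} and Corollary~\ref{Cor:Simp}(1)), so $\Hom_{\Ver_{p^{n-1}}}(\unit,\Fr M)=0$ as well. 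For the inductive step, choose a simple subobject $N\hookrightarrow M$ with quotient $M'$; applying $\Fr$ (exact!) gives a short exact sequence $\Fr N\hookrightarrow \Fr M\tto \Fr M'$ in $\Ver_{p^{n-1}}^s$, and a diagram chase with the long exact $\Ext$-sequences for $\Hom(\unit,-)$ reduces the surjectivity for $M$ to: surjectivity for $N$ and for $M'$ (both handled by induction on length — note $N,M'\in\Ver_{p^n}^s$ since $\Ver_{p^n}^s$ is topologising), \emph{plus} injectivity of the connecting map $\Ext^1_{\Ver_{p^n}^s}(\unit,N)\to\Ext^1_{\Ver_{p^{n-1}}}(\unit,\Fr N)$ compatibility. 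Here is where Lemma~\ref{Lem:Ext} enters: it gives precisely that $\Fr$ induces an \emph{isomorphism} $\Ext^1_{\Ver_{p^n}^s}(N,\unit)\xrightarrow{\sim}\Ext^1_{\Ver_{p^{n-1}}^s}(\Fr N,\unit)$ for $N$ simple, and by dualizing/using that $\unit$ is self-dual this controls $\Ext^1(\unit,N)$ as well. So the five-lemma-style chase closes the induction.

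\textbf{Assembling.} Having the $\Hom(\unit,-)$ statement, I return to a general morphism $f:\Fr X\to \Fr Y$ with $X,Y\in\Ver_{p^n}^r$ that we wish to lift. Present $X$ as a subquotient $A\hookleftarrow B\tto X$ of $A=\bigoplus_j (L_1^{[n]})^{\otimes j_\alpha}$ with all $j_\alpha\le r$; similarly for $Y$. Because $\Fr$ commutes with $-\otimes L_1^{[n]}$ and with the internal-hom of such tensor powers inside the range $2r<p^{n-1}-p^{n-2}$, a morphism $\Fr X\to\Fr Y$ corresponds, via adjunction applied to the dualizable objects $(L_1^{[n]})^{\otimes j}$ in $\Ver_{p^n}$, to an element of $\Hom_{\Ver_{p^{n-1}}}(\unit, \Fr(Z))$ for an appropriate $Z$ built from $X,Y$ and tensor powers of $L_1^{[n]}$, living in $\Ver_{p^n}^{\le 2r}$; the previous paragraph lifts it. Tracking through the (naturality of the) local-monoidality isomorphisms then shows the lift is a genuine morphism $X\to Y$ mapping to $f$.

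\textbf{Expected main obstacle.} The hardest part is the bookkeeping in the last paragraph: making the internal-hom/adjunction argument rigorous despite $\Fr$ being only locally monoidal and $\Ver_{p^n}^r$ not being rigid — one must verify that every tensor product invoked (the evaluation/coevaluation for $(L_1^{[n]})^{\otimes j}$, the objects $X^*\otimes Y$ replaced by $(L_1^{[n]})^{\otimes j}$-sandwiches, etc.) genuinely stays within a $\Ver_{p^n}^{s}$ with $2s<p^{n-1}-p^{n-2}$ where all the earlier lemmas apply, and that the local-monoidality isomorphism is natural enough to make the adjunction square commute. An alternative, possibly cleaner route — and the one I would try first — is to avoid adjunction entirely: prove surjectivity of $\Fr$ on $\Hom(X,Y)$ directly by dévissage in \emph{both} arguments simultaneously, using Lemma~\ref{LemTriv}-style length arguments together with the vanishing $\Ext^1_{\Ver_{p^n}}(T_1,T_2)=0$ for $T_i\in I(\cD)$ recorded before Proposition~\ref{prop:equiv_abelian} to split off tensor powers of $L_1^{[n]}$, reducing to the simple-target case where the $\Hom(\unit,-)$ computation above applies after tensoring by a dual; but either way the combinatorial constraint-checking on which $\Ver_{p^n}^{s}$ the intermediate objects occupy is the crux.
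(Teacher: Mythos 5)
Your core argument is the paper's: reduce via rigidity to showing $\Fr\colon\Hom_{\Ver_{p^n}^{2r}}(\unit,M)\to\Hom_{\Ver_{p^{n-1}}^{2r}}(\unit,\Fr M)$ is bijective, then induct on $\ell(M)$ with the $\Ext^1$-injectivity coming from Lemma~\ref{Lem:Ext}. The hesitancy in your ``assembling'' and ``expected main obstacle'' paragraphs is misplaced, however: since $L_1^{[n]}$ is self-dual (it is the image of the self-dual standard $SL_2$-module $V$) and duals of subquotients are subquotients of duals, the subcategory $\Ver_{p^n}^r$ \emph{is} closed under duality, so $N^*\otimes N'\in\Ver_{p^n}^{2r}$ outright and $\Fr(N^*\otimes N')\cong(\Fr N)^*\otimes\Fr N'$ by local monoidality (everything staying in the range $2r<p^{n-1}-p^{n-2}$). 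Thus the reduction $\Hom(N,N')\cong\Hom(\unit,N^*\otimes N')$ works directly and the elaborate $(L_1^{[n]})^{\otimes j}$-sandwich bookkeeping is unnecessary; your ``main reduction'' paragraph already contains the whole proof.
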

\begin{proof}
We use a variation of the proof of \cite[Proposition 5.2.1]{EnSer}.

We have already established that $\Fr: {\Ver_{p^{n}}^r}\to{\Ver^r_{p^{n-1}}}$ is faithful. So we only need to check that 
$$ \dim\Hom_{ {\Ver_{p^{n}}^r}}(N, N')  = \dim \Hom_{ {\Ver_{p^{n-1}}^r}}(\Fr(N), \Fr(N'))$$ for any $N , N' \in  {\Ver_{p^{n}}^r}$. Now, 
\begin{align*} \dim\Hom_{ {\Ver_{p^{n}}^r}}(N, N') & =  \dim\Hom_{ {\Ver_{p^{n}}^{2r}}}(\unit, N^*\otimes N'),\\
  \dim \Hom_{ {\Ver_{p^{n-1}}^r}}(\Fr(N), \Fr(N')) &=  \dim\Hom_{ {\Ver_{p^{n-1}}^{2r}}}(\unit, \Fr(N^*\otimes N')).
\end{align*}

%\footnote{why did the relation between $n$ and $r$ change? \Inna{Because $N^* \otimes N$ lies in a larger category $\cC^{2r}_{2n}$, and we need it to be ``nice''}

%I agree that $N^\ast\otimes N$ lies in $\cC^{2r}_{2n}$, but `$r$' and `$n$' have not changed, so it seems a bit weird that suddenly we take a different relaltion between them than in the statement, I'm probably missing something obvious}

It is enough to prove that the following statement holds: 
 $$ \forall M \in  {\Ver_{p^{n}}^{2r}}, \;\; \dim\Hom_{ {\Ver_{p^{n}}^{2r}}}(\unit, M)= \dim\Hom_{ {\Ver_{p^{n-1}}^{2r}}}(\unit, \Fr(M)).$$
 
 We will prove this by induction on the length of $M$. If $\ell(M)=1$, the statement follows from equation~\eqref{FrSimple}. Now we assume the equation holds for all modules of length strictly smaller than $\ell(M)$ and consider a short exact sequence 
 $$0\to M'\to M\to M''\to 0,$$
 with $M'$ simple. The functor $\Fr$, which is exact and faithful when restricted to $\Ver_{p^n}^{2r}$, by Corollary~\ref{cor:Fr_exact_faithful}, yields a commutative diagram
 $$\xymatrix{
 0\ar[r]&\Hom(\unit,M')\ar[d]\ar[r]&\Hom(\unit,M)\ar[d]\ar[r]&\Hom(\unit,M'')\ar[d]\ar[r]&\Ext^1_{{\Ver_{p^{n}}^{2r}}}(\unit,M')\ar[d]\\
0\ar[r]&\Hom(\unit,\Fr M')\ar[r]&\Hom(\unit,\Fr M)\ar[r]&\Hom(\unit,\Fr M'')\ar[r]&\Ext^1_{{\Ver_{p^{n-1}}^{2r}}}(\unit,\Fr M'),
 }$$
 with exact rows. By the induction hypothesis, the first and third vertical arrow from the left are isomorphisms, and we already know that the second arrow from the left is a monomorphism. It hence suffices to prove that the right morphisms is a monomorphism as well. The equivalent claim about extensions of the form $\Ext^1_{{\Ver_{p^{n}}^{2r}}}(M',\unit)$, for simple $M'\in {\Ver_{p^{n}}^{2r}}$, is proved in Lemma~\ref{Lem:Ext} for $s=2r$.

%Assume the contrary. Let $L$ be of minimal length such that there exists a corresponding non-split short exact sequence $$0 \to \unit \to M \to L \to 0$$ in $\InnaC{\Ver_{p^{n}}^{2r}}$ whose image under $\Fr$ is split.  
%Low, by exactly the same argument as in in the proof of \cite[Proposition 5.2.1]{EnSer}, we see that $L$ must be simple (due to the minimality of its length). 
This completes the proof of the proposition.
\end{proof}

We now prove a general statement about essential surjectivity. In order to do so, we will use the following notion:
\begin{definition}
 A full subcategory $\cB$ of an abelian category $\cC$ will be called {\it generating} if every object in $\cC$ is a subquotient of an object in $\cB$.
\end{definition}
\begin{example}
 The full subcategory of finite direct sums of tensor powers ${\left(L^{[n]}_1\right)^{\otimes j}}$, $0 \leq j \leq r$ is a generating subcategory of the category ${\Ver_{p^{n}}^r}$.
\end{example}

\begin{prop}\label{prop:func_ess_surj}
 Let $F: \cC' \to \cC$ be a full exact functor between finite length abelian categories. Assume the following conditions hold:
 \begin{enumerate}
  \item \label{itms:ess_surj1} $F$ sends simple objects in $\cC'$ to simple or zero objects in $\cC$.
  \item \label{itms:ess_surj2} There exists a generating full subcategory $\cB \subset\cC$ such that $\cB \subset F(\cC')$.
 \end{enumerate}
Then $F$ is essentially surjective.
\end{prop}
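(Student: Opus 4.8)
The plan is to show that $F(\cC')$ is closed under subobjects and quotients in $\cC$; once we know this, the generating hypothesis \eqref{itms:ess_surj2} forces $F(\cC')$ to contain every object of $\cC$ (since every object is a subquotient of some $B\in\cB\subset F(\cC')$), and essential surjectivity follows. So the real content is: if $X\in F(\cC')$ and $0\to X'\to X\to X''\to 0$ is exact in $\cC$, then both $X'$ and $X''$ lie (up to isomorphism) in $F(\cC')$.

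First I would reduce the claim about subquotients to the case of \emph{simple} subquotients. Pick $Y\in\cC'$ with $F(Y)\cong X$. We induct on $\ell(Y)$ (which bounds $\ell(X)$ since $F$ is exact and kills or preserves simples). Given a subobject $X'\subset X$, it has finite length; I want to realise $X'$ as $F(Y')$ for a suitable subobject $Y'\subset Y$. The key step is to lift one simple subobject: let $S\subset X'$ be a simple subobject of $X$ contained in $X'$. The hard part — and I expect this to be the main obstacle — is producing a simple subobject $S_0\subset Y$ with $F(S_0)\cong S$. Here is where fullness of $F$ enters: the inclusion $S\hookrightarrow X\cong F(Y)$, together with the fact that $S$ itself lies in $F(\cC')$ (it is simple in $\cC$; by \eqref{itms:ess_surj2} and the reduction we may arrange $S\in F(\cC')$, or alternatively we can work with a simple $S_0'\in\cC'$ whose image is $S$ — such $S_0'$ exists because $S$, being a subquotient of some $B\in\cB\subset F(\cC')$, is itself hit by $F$ using \eqref{itms:ess_surj1}), gives via fullness a morphism $g:S_0'\to Y$ in $\cC'$ with $F(g)$ equal to the inclusion $S\hookrightarrow X$ up to the identification. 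Since $F(g)$ is a monomorphism and $F$ is exact and faithful, $\ker(g)$ maps to $0$, hence $\ker(g)=0$ (faithfulness kills no nonzero object here because $S_0'$ is simple, so $\ker g$ is $0$ or $S_0'$, and the latter is impossible as $F(g)\neq 0$); thus $g$ is a monomorphism and $S_0:=\mathrm{im}(g)\cong S_0'$ is a simple subobject of $Y$ with $F(S_0)\cong S$.

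With $S_0\subset Y$ in hand, form $Y/S_0$; then $F(Y/S_0)\cong X/S$ by exactness, and $\ell(Y/S_0)<\ell(Y)$, so by the induction hypothesis every subquotient of $X/S$ lies in $F(\cC')$. Now $X'/S$ is a subobject of $X/S$, hence $X'/S\cong F(Z)$ for some $Z\subset Y/S_0$; pulling $Z$ back along $Y\to Y/S_0$ gives a subobject $Y'\subset Y$ containing $S_0$ with $Y'/S_0\cong Z$, and exactness of $F$ together with $F(S_0)\cong S$, $F(Y'/S_0)\cong X'/S$ gives $F(Y')\cong X'$. This shows every subobject of $X$ is in $F(\cC')$; the quotient $X''=X/X'$ is then $F(Y/Y')$, so quotients are handled simultaneously. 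Feeding this into the opening paragraph — every object of $\cC$ is a subobject of a quotient (equivalently a subquotient) of some $B\in\cB$, and $B=F(C)$, so the object is $F$ of the corresponding subquotient of $C$ — completes the proof. The one delicate point to write carefully is the simultaneous bookkeeping of "$F$ of a subquotient of $Y$" versus "a subquotient of $F(Y)$", i.e. checking that the subobject lattice of $Y$ surjects onto the subobject lattice of $F(Y)$; this is exactly what the inductive lifting of simple subobjects establishes, and fullness is used precisely there.
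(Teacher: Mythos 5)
Your proposal is correct in substance, and shares with the paper the crucial observation (derived from a Jordan--H\"older argument and hypotheses \eqref{itms:ess_surj1}, \eqref{itms:ess_surj2}) that every simple object of $\cC$ lies in the essential image $F(\cC')$. After that, however, the two arguments diverge in technique. The paper proceeds top-down: it notes, as an abstract fact, that the essential image $\cA$ of a full exact functor is closed under kernels and cokernels of morphisms in $\cC$; it then reduces (via the generating subcategory) to showing that every subobject $M$ of $T\in\cB$ lies in $\cA$, and runs an induction on $\ell(M)$ using a simple \emph{quotient} $M\tto L$, the snake lemma sequence $0\to L\to T/N\to T/M\to 0$, and closure of $\cA$ under kernels and cokernels. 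Your argument instead works bottom-up: you explicitly lift a simple \emph{subobject} $S\subset X'$ along fullness to a simple subobject $S_0'\subset Y$ with $F(S_0')\cong S$, then induct on $\ell(Y)$ to show that the map from the subobject lattice of $Y$ to that of $F(Y)$ is surjective, and finally pull back through the quotient $Y\to Y/S_0'$. Both are valid; the paper's version is shorter because it treats ``essential image closed under $\ker$/$\mathrm{coker}$'' as a black box, while yours is more constructive and makes the lifting explicit. Your own flagged worry (identifying ``$F$ of a subobject'' with ``a subobject of $F(Y)$'') is genuine and is exactly what the careful phrasing of the induction statement must handle; the paper's route sidesteps this bookkeeping.

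One small inaccuracy to fix: you invoke \emph{faithfulness} of $F$ (``$F$ is exact and faithful''), but the proposition does not assume $F$ faithful, and indeed hypothesis \eqref{itms:ess_surj1} allows $F$ to kill simple objects. Fortunately you do not actually need faithfulness: your parenthetical remark already gives the correct argument — $S_0'$ is simple, so $\ker g$ is $0$ or all of $S_0'$, and the latter would force $F(g)=0$, contradicting that $F(g)$ is a nonzero monomorphism. Just drop the word ``faithful'' and let that parenthetical carry the weight.
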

\begin{proof}

Denote by $\cA$ the essential image of $\cC$ under $F$. Since $F$ is full and exact, $\cA$ is an abelian subcategory of $\cC$, {\it i.e.} it  is full, and closed under taking kernels and cokernels of morphisms. We claim that the conditions (1) and (2) imply that

(1)' Every simple object in $\cC$ lies in $\cA$. 

Indeed, by assumption (2) every simple object $L\in\cC$ is a subquotient of an object $B\in\cB$ for which there exists $B'\in\cC'$ with $B=F(B')$. Taking a Jordan-H\"older filtration of $B'$ yields, through the exact functor $F$, an exhaustive filtration of $B$. By assumption (1), the quotients of the filtration are simple and hence this is a Jordan-H\"older filtration of $B$. Consequently $L$ is of the form $F(L')$ for some simple $L'\in\cC'$.

   We also note that any object in $\cC$  can be presented as the cokernel of a morphism $M_1 \to M_2$, where $M_1, M_2$ are subobjects of an object in $\cB$. It is therefore enough to prove that $M_1, M_2 \in \cA$.
    
  Now, consider a subobject $M$ of an object $T \in \cB$. We will prove that $M \in \cA$ by induction on the length of $M$.
  
 {\bf Base:} The case $M=0$ is obvious. 
 
  {\bf Step:} Let $0 \neq M \subset T$ with $T\in\cB$. We denote by $i: M \hookrightarrow T$ the corresponding monomorphism.

  Let $M \stackrel{q}{\twoheadrightarrow} L $ be a simple quotient of $M$ and let $N:= Ker(q)$. Then we have a commutative diagram with exact rows:
  $$ \xymatrix{   &0 \ar[r] &N \ar[d]^{\id} \ar[r] &M \ar[r]^-{q} \ar[d]^{i} &{L \cong M/N} \ar[r] \ar[d] &0 \\
    &0 \ar[r] &N \ar[r] &{T} \ar[r] &{T/N} \ar[r] &0 }$$

    We now have: $N \in \cA$ (by induction assumption), $L \in \cA$ by (1)' and $T \in \cA$ by assumption \eqref{itms:ess_surj2}. Since $\cA$ is closed under taking cokernels, $T/N \in \cA$ as well. 
    Now, by standard diagram chasing (e.g. the Snake Lemma), we have an exact sequence 
    $$0 \to L \to T/N \to T/M \to 0$$ where $L \to T/N$ is the map determined by $i$ and $q$.
    Since both $L,\, T/N \in \cA$, we conclude that $T/M \in \cA$, and hence $M \in \cA$ (since it is the kernel of the map $T \to T/M$). 
    
    This completes the proof of the proposition.
\end{proof}

\begin{corollary}\label{cor:Fr_ess_surj} 
  If $0<{2r<p^{n-1} - p^{n-2}}$ the Frobenius twist $\Fr: {\Ver_{p^{n}}^r}\to{\Ver_{p^{n-1}}^r}$ is essentially surjective.
\end{corollary}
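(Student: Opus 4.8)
The plan is to obtain this as an immediate application of the abstract criterion Proposition~\ref{prop:func_ess_surj} to the functor $F=\Fr:\Ver_{p^n}^r\to\Ver_{p^{n-1}}^r$, so the work reduces to checking its hypotheses.

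First I would record that $\Fr$ indeed makes sense as a functor $\Ver_{p^n}^r\to\Ver_{p^{n-1}}^r$ and is full and exact: exactness, together with the fact that the image lands in $\Ver_{p^{n-1}}^r$, is Corollary~\ref{cor:Fr_exact_faithful} (which uses only $r<p^{n-1}-p^{n-2}$), while fullness is Proposition~\ref{prop:Fr_full} (this is the place where the hypothesis is needed in the stronger form $2r<p^{n-1}-p^{n-2}$). Both $\Ver_{p^n}^r$ and $\Ver_{p^{n-1}}^r$ are finite length abelian categories, being topologising subcategories of the tensor categories $\Ver_{p^n}$ and $\Ver_{p^{n-1}}$.

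Next I would verify hypothesis \eqref{itms:ess_surj1}: by Corollary~\ref{Cor:Simp}(1) the simple objects of $\Ver_{p^n}^r$ are the $L_a^{[n]}$ with $a\le r$, and since $r<p^{n-1}-p^{n-2}$ each such $a$ lies in $\Lambda^{[n-1]}$, so equation~\eqref{FrSimple} gives $\Fr(L_a^{[n]})\cong L_a^{[n-1]}$, which is simple (in particular nonzero). For hypothesis \eqref{itms:ess_surj2} I would take $\cB\subset\Ver_{p^{n-1}}^r$ to be the full subcategory of finite direct sums of the tensor powers $(L_1^{[n-1]})^{\otimes j}$ with $0\le j\le r$; this is a generating subcategory by the Example preceding Proposition~\ref{prop:func_ess_surj}, and it lies in the image of $\Fr$ because local monoidality of $\Fr$ together with equation~\eqref{FrSimple} gives $\Fr((L_1^{[n]})^{\otimes j})\cong (L_1^{[n-1]})^{\otimes j}$ for $0\le j\le r$ (as already used in the proof of Corollary~\ref{cor:Fr_exact_faithful}), and $\Fr$ preserves finite direct sums. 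With all hypotheses in place, Proposition~\ref{prop:func_ess_surj} yields that $\Fr$ is essentially surjective.

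There is no genuine obstacle at this stage: the substance has been front-loaded into the fullness statement (Proposition~\ref{prop:Fr_full}, which in turn rests on the $\Ext^1$ input of Lemma~\ref{Lem:Ext}) and into the abstract criterion Proposition~\ref{prop:func_ess_surj}. The only points requiring a little care are invoking the earlier results with their correct numerical hypotheses (notably $2r<p^{n-1}-p^{n-2}$ for fullness) and confirming that the chosen generating subcategory $\cB$ really lies inside the essential image of $\Fr$.
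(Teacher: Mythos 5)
Your proposal is correct and follows essentially the same route as the paper: both apply Proposition~\ref{prop:func_ess_surj} to $\Fr$, check that simple objects go to simple objects via equation~\eqref{FrSimple}, and verify the generating-subcategory condition using the tensor powers $(L_1^{[n-1]})^{\otimes j}$. You are a bit more explicit than the paper in recording that fullness and exactness come from Proposition~\ref{prop:Fr_full} and Corollary~\ref{cor:Fr_exact_faithful}, but this is the same argument.
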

\begin{proof}
We only need to check that the assumptions of Proposition \ref{prop:func_ess_surj} are satisfied.

That $\Fr :{\Ver_{p^{n}}^r}\to{\Ver_{p^{n-1}}^r}$ sends simple objects to simple objects follows from equation~\eqref{FrSimple}.
  
Second, consider the full subcategory $\cB \subset {\Ver_{p^{n-1}}^r}$ of finite direct sums of tensor powers ${\left(L_1^{[n-1]}\right)^{\otimes j}}$, $0\leq j \leq r$. For each $j \leq r$, $$\left(L_1^{[n-1]}\right)^{\otimes j} = \Fr\left(\left(L_1^{[n]}\right)^{\otimes j}\right)$$ hence $\cB \subset \Fr\left({\Ver_{p^{n}}^r}\right)$, as required.
\end{proof}

\subsection{The category \texorpdfstring{$\mathcal{C}$}{Ver_p(infty)}}

Given $r>0$, Theorem \ref{thm:Fr_loc_equiv} allows us to define a (stabilizing) limit of the system of categories and functors $(\Ver_{p^{n}}^r, \Fr)$ as $n \to \infty$. This limit will be denoted by $$\mathcal{C}^r = \varprojlim_{n\to \infty} {\Ver_{p^{n}}^r}.$$
In particular, we have $\mathcal{C}^r\simeq \Ver_{p^{n}}^r$ if $n>4r$, so $\cC^r$ is a $k$-linear abelian category, and we have obvious (fully faithful, exact) embeddings $\mathcal{C}^r \hookrightarrow \cC^{r+1}$ and hence we can define a colimit $$\mathcal{C}:=\varinjlim_{r \to\infty} \mathcal{C}^r.$$ 

%\footnote{should we say something about this limit? I suppose this is a 2-limit of a 2-functor from $\mN$ (the poset viewed as a 2-category) to the category of categories, we are lucky that $\mN$ is a total order, otherwise we would normally have a pseudo-functor and need to investigate coherence relations}

The abelian category $\mathcal{C}$ inherits from the categories $\Ver_{p^{n}}$ a rigid symmetric monoidal structure, given by bifunctors $$- \otimes -: \mathcal{C}^r \times \mathcal{C}^{r'}\longrightarrow \mathcal{C}^{r+r'}. $$ 
Hence $\mathcal{C}$ is a tensor category in the sense of Section~\ref{DefTC}. It possesses a distinguished object $\bar{L}_1$ which is the limit of the objects $L_1^{[n]} \in \Ver_{p^{n}}^r$. By construction, $\mathcal{C}$ is tensor-generated by $\bar{L}_1$: namely, any object $M \in \mathcal{C}$ is a subquotient of a finite direct sum of tensor powers of $\bar{L}_1$.

Recall that the simple objects in $\Ver_{p^n}$ are labelled by $0\le a \le p^n-p^{n-1}$ as in \eqref{LabelSimp}. Since $\cC^r\hookrightarrow\cC^{r+1}$ is the embedding of a topologising subcategory, it follows that the set of isomorphism classes of simple objects can be interpreted as the union of the corresponding sets for $\cC^r$. By Corollary~\ref{Cor:Simp}(1), we can therefore label the simple objects in $\cC$ by
$$\{\bar{L}_a\,|\, a\in\mN\},$$
where $\bar{L}_a$ for $a\le r$, corresponds to the simple object $L_{a}^{[n]}$ in $\cC^r\subset\cC$, well-defined by \eqref{FrSimple}.

\begin{lemma}\label{LemFrL}
The Frobenius functor $\Fr$ on $\cC$ takes values in $\cC\subset\cC\boxtimes\Ver_p$, and
$$\Fr(\bar{L}_a)\cong \bar{L}_{pa},$$
for every $a\in \mN$.
\end{lemma}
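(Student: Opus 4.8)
The plan is to compute $\Fr(\bar L_a)$ one simple object at a time, feeding the situation into Lemma~\ref{StupidLemma} and reading off the answer from equation~\eqref{FrSimple}. Fix $a\in\mN$ and choose $n$ large enough, for instance $n>4pa$, so that the projection of the stabilised inverse system defining $\cC^{pa}$ onto its $n$-th term is an equivalence $G\colon\cC^{pa}\stackrel{\sim}{\to}\Ver_{p^n}^{pa}$, a finite composite of the Frobenius-twist equivalences of Theorem~\ref{thm:Fr_loc_equiv}, under which $\bar L_a$ corresponds to $L_a^{[n]}$ and $\bar L_{pa}$ to $L_{pa}^{[n]}$. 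Note that $\bar L_a^{\otimes p}\in\cC^{pa}$, that $\cC^{pa}$ is a topologising subcategory of $\cC$, and that $G$ is exact.

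The main point requiring care is that $G$ is locally symmetric monoidal: since the monoidal structure on $\cC$ is inherited from those of the $\Ver_{p^m}$ precisely because the transition functors $\Fr$ are locally monoidal and symmetry-preserving, the equivalence $G$ satisfies $G(M\otimes M')\cong G(M)\otimes G(M')$, naturally in $M,M'$ and compatibly with the braidings, whenever $M\otimes M'\in\cC^{pa}$. In particular $G(\bar L_a^{\otimes p})\cong(L_a^{[n]})^{\otimes p}$ and $G$ intertwines the two actions of $kS_p$ on these $p$-th tensor powers; this is exactly the hypothesis needed to apply Lemma~\ref{StupidLemma} to the data $\cC^{pa}\subset\cC$, $X=\bar L_a$, $F=G$, with target $\cV=\Ver_{p^n}$ and $Y=L_a^{[n]}$. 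I expect unwinding this coherence to be the only genuine obstacle, the rest being formal. Lemma~\ref{StupidLemma} then gives $\Fr(\bar L_a)\in\cC^{pa}\boxtimes\Ver_p\subset\cC\boxtimes\Ver_p$ together with $(G\boxtimes\mathrm{Id})(\Fr(\bar L_a))\cong\Fr(L_a^{[n]})$, the latter being the Frobenius functor of $\Ver_{p^n}$.

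Since $a<p^{n-1}-p^{n-2}$, equation~\eqref{FrSimple} identifies $\Fr(L_a^{[n]})\cong L_a^{[n-1]}=L_{pa}^{[n]}$, which lies in $\Ver_{p^n}^{pa}\boxtimes\Ve$ with trivial $\Ver_p$-component. As $G\boxtimes\mathrm{Id}\colon\cC^{pa}\boxtimes\Ver_p\stackrel{\sim}{\to}\Ver_{p^n}^{pa}\boxtimes\Ver_p$ restricts to an equivalence $\cC^{pa}\boxtimes\Ve\stackrel{\sim}{\to}\Ver_{p^n}^{pa}\boxtimes\Ve$, it follows that $\Fr(\bar L_a)\in\cC^{pa}\boxtimes\Ve=\cC^{pa}\subset\cC$ and $\Fr(\bar L_a)\cong G^{-1}(L_{pa}^{[n]})\cong\bar L_{pa}$, which is the asserted isomorphism. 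For the remaining claim, run the identical argument with an arbitrary $X\in\cC^r$ in place of $\bar L_a$: Lemma~\ref{StupidLemma} places $\Fr(X)$ in $\cC^{pr}\boxtimes\Ver_p$ and identifies $(G\boxtimes\mathrm{Id})(\Fr(X))\cong\Fr(G(X))$ with $G(X)\in\Ver_{p^n}^r$, and for $r<p^{n-1}$ this has trivial $\Ver_p$-component by the factorisation of $\Fr$ recalled in Section~\ref{ssec:Fr}; hence $\Fr(X)\in\cC^{pr}\boxtimes\Ve\subseteq\cC\boxtimes\Ve\cong\cC$. Since every object of $\cC$ lies in some $\cC^r$, the essential image of $\Fr\colon\cC\to\cC\boxtimes\Ver_p$ is contained in $\cC\boxtimes\Ve$, and corestriction yields the functor $\Fr\colon\cC\to\cC$ of the statement.
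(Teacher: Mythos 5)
Your proof is correct and takes essentially the same route as the paper: identify $\cC^{pa}$ with $\Ver_{p^n}^{pa}$ for $n\gg pa$, take $F$ to be the resulting inclusion into $\Ver_{p^n}$, and feed this into Lemma~\ref{StupidLemma} together with equation~\eqref{FrSimple}. The paper leaves the monoidality of $G$ and the verification of the $kS_p$-intertwining implicit; you have spelled these out, but the underlying argument is the same.
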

\begin{proof}
By Lemma~\ref{StupidLemma}, we can calculate the Frobenius twist using 
$\cC^r\subset\cC$, for $r>>{0}$, with 
$F:\cC^r=\Ver_{p^n}^r\hookrightarrow \Ver_{p^n}$. The conclusion then follows 
from equation~\eqref{FrSimple}.
\end{proof}

\begin{lemma}\label{lem:tilt_ff}
 There exists a fully faithful $k$-linear SM functor $I':Tilt(SL_2) \to \mathcal{C}$ sending the standard $2$-dimensional $SL_2$-representation $V$ to $\bar{L}_1$.
\end{lemma}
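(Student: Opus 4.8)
The plan is to construct $I'$ as a monoidal functor out of $Tilt(SL_2)$ by using the universal property of $Tilt(SL_2)$ together with the standard presentation of tilting modules by $V=T_1$. First I would note that $\mathcal{C}$ is a $k$-linear tensor category equipped with a distinguished object $\bar L_1$ which, by Lemma~\ref{LemFrL} and the labelling of simples, behaves like the image of $V$; in particular $\bar L_1$ is self-dual (inherited from $L_1^{[n]}$ being self-dual in $\Ver_{p^n}$, which in turn comes from $T_1\cong T_1^*$ in $Tilt(SL_2)$) and comes with evaluation and coevaluation maps. Since $Tilt(SL_2)$ is the Karoubian additive rigid monoidal category freely generated by a self-dual object together with the data making it the category of $SL_2$-tilting modules, specifying a dualisable object $\bar L_1$ in a Karoubian SM category determines a symmetric monoidal functor $Tilt(SL_2)\to\mathcal{C}$, $V\mapsto\bar L_1$: more concretely, one uses that for each $n$ we already have the monoidal functor $Tilt(SL_2)\to\Ver_{p^n}$ sending $T_i\mapsto\mathbb{T}_i^{[n]}$, and these should assemble compatibly with the $\Fr$ maps into a functor landing in $\mathcal{C}=\varinjlim_r\varprojlim_n\Ver_{p^n}^r$.

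The cleanest route is probably the second one: for fixed $r$, the objects $\mathbb{T}_i^{[n]}$ with $i\le r$ (equivalently, the summands of $(L_1^{[n]})^{\otimes j}$, $j\le r$) lie in $\Ver_{p^n}^r$, and by \eqref{FrSimple} and the locally monoidal property of $\Fr$ the Frobenius twist sends $\mathbb{T}_i^{[n]}$ to $\mathbb{T}_i^{[n-1]}$, so the restriction of the monoidal functor $Tilt(SL_2)\to\Ver_{p^n}$ to the (non-full, non-monoidal) subcategory of tilting modules of bounded highest weight is intertwined with $\Fr$. Passing to the limit $n\to\infty$ and then $r\to\infty$ gives a $k$-linear symmetric monoidal functor $I':Tilt(SL_2)\to\mathcal{C}$ with $I'(T_i)=\bar L_i$ (the limit of $\mathbb{T}_i^{[n]}$) and $I'(V)=\bar L_1$. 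Here one must be careful about the $k$-linear twisting: as the Remark in Section~\ref{ssec:Fr} explains, the categories are normalised so that \eqref{FrWeNeed} is $k$-linear (at the cost of $\Ver_{p^{n-1}}\hookrightarrow\Ver_{p^n}$ not being $k$-linear), and one checks that $Tilt(SL_2)\to\Ver_{p^n}$, composed with the appropriate twist, is still $k$-linear; since $k$ is perfect this is harmless.

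For full faithfulness, fix $i,j\ge 0$ and pick $r\ge\max(i,j)$ (in fact $2\max(i,j)$ to stay inside the relevant $\Ver^{2r}$ for Hom-computations). Then $I'$ on $\Hom_{Tilt(SL_2)}(T_i,T_j)$ factors as $Tilt(SL_2)\to\Ver_{p^n}\cong\Ver_{p^n}^r\boxtimes(\cdots)$, identified via $\mathcal{C}^r\simeq\Ver_{p^n}^r$ for $n>4r$. It therefore suffices to show that for $n$ large, $$\Hom_{Tilt(SL_2)}(T_i,T_j)\;\longrightarrow\;\Hom_{\Ver_{p^n}}(\mathbb{T}_i^{[n]},\mathbb{T}_j^{[n]})$$ is an isomorphism. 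Faithfulness holds because $\mathcal{I}_n(T_i,T_j)=0$ once $p^n-1>i+j$ (the ideal $\mathcal{I}_n$ is generated by $\mathrm{id}_{T_{p^n-1}}$, and a morphism factoring through a sum of $T_m$'s with $m\ge p^n-1$ between $T_i$ and $T_j$ must vanish for dimension/highest-weight reasons — alternatively cite $\cap_i\mathcal{I}_i=0$ together with finite-dimensionality). For fullness, since $\Ver_{p^n}$ is the abelian envelope of $\mathcal{T}_n=Tilt(SL_2)/\mathcal{I}_n$, the functor $\mathcal{T}_n\to\Ver_{p^n}$ is fully faithful, so $\Hom_{\mathcal{T}_n}(T_i,T_j)\cong\Hom_{\Ver_{p^n}}(\mathbb{T}_i^{[n]},\mathbb{T}_j^{[n]})$; combined with $\mathcal{I}_n(T_i,T_j)=0$ this gives the desired isomorphism. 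Taking $n\to\infty$ then $r\to\infty$, the induced maps on all Hom-spaces are isomorphisms, so $I'$ is fully faithful.

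\emph{Main obstacle.} The genuinely delicate point is the bookkeeping around the limit: verifying that the monoidal functors $Tilt(SL_2)\to\Ver_{p^n}$ really are compatible with the Frobenius twists $\Fr:\Ver_{p^n}^r\to\Ver_{p^{n-1}}^r$ on the nose (including the $k$-linear normalisation), so that they descend to a well-defined monoidal functor into $\varinjlim_r\varprojlim_n\Ver_{p^n}^r$ rather than merely a compatible family. This amounts to checking that the structural isomorphisms $c^{I'}_{T_i,T_j}$ are preserved by $\Fr$ (which follows from $\Fr$ being locally monoidal and Lemma~\ref{StupidLemma}, applied to $F:\Ver_{p^n}^r\hookrightarrow\Ver_{p^n}$), and that the monoidal structure on $\mathcal{C}$ restricts correctly; the rigidity and symmetry constraints are then automatic since they hold at every finite stage.
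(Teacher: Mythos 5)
Your proposal is essentially correct and takes a route that is close to, but slightly different from, the paper's. The paper constructs $I'$ by verifying that $\bar L_1$ satisfies $\dim(\bar L_1)=2$, $\wedge^2\bar L_1\cong\unit$, $\wedge^3\bar L_1=0$ (by pulling these back along the equivalence $\mathcal{C}^r\simeq\Ver_{p^n}^r$ for $n\gg r$) and then invoking \cite[Proposition~3.4]{BE}, which says precisely that such an object in any Karoubian SM $k$-linear category determines an additive SM functor from $Tilt(SL_2)$. Your first route gestures at this but the phrase ``freely generated by a self-dual object'' is too weak: self-duality of $\bar L_1$ alone is far from sufficient (that would at best give a map from a Temperley--Lieb-type category), and the $\wedge^2$ and $\wedge^3$ conditions are essential. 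If you sharpen that statement to the correct universal property, your Route 1 becomes the paper's argument. Your preferred Route 2 (assembling the functors $Tilt(SL_2)\to\Ver_{p^n}$ directly into the limit) is a legitimate alternative, but it has an extra verification you glide over: \eqref{FrSimple} only gives $\Fr(L_a^{[n]})\cong L_a^{[n-1]}$ on \emph{simples}, and showing $\Fr(\mathbb{T}_i^{[n]})\cong\mathbb{T}_i^{[n-1]}$ for the tilting objects requires an additional step (e.g.\ using that $\Fr$ is exact, locally monoidal, and preserves composition factors, so it permutes the indecomposable summands of $(L_1^{[n]})^{\otimes i}$ with the same constituents). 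The paper handles one such case explicitly inside Lemma~\ref{Lem:Ext} for $\mathbb{T}^{[j+1]}_{2p-2}$, and sidesteps the general issue by using the universal property for the construction and only matching with $F_n$ afterwards. Your full-faithfulness argument — reducing to $\Hom_{Tilt}(T_i,T_j)\to\Hom_{\Ver_{p^n}}(\mathbb{T}_i^{[n]},\mathbb{T}_j^{[n]})$ being an isomorphism for $n\gg i+j$, via $\mathcal{I}_n(T_i,T_j)=0$ and the fully faithful embedding $\mathcal{T}_n\hookrightarrow\Ver_{p^n}$ — agrees with the paper's, which phrases it as ``$F_n$ is full with kernel $\mathcal{I}_n$, and $\bigcap_n\mathcal{I}_n=0$''. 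The buy of the paper's route is that \cite[Proposition~3.4]{BE} produces the functor into $\mathcal{C}$ with $\mathcal{C}$'s intrinsic $k$-linear structure in one stroke, so the Frobenius-twist normalisation only needs to be tracked in the full-faithfulness step; your direct-limit construction has to track it throughout.
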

\begin{proof}

Consider the object $\bar{L}_1 \in \mathcal{C}$. By Theorem 
\ref{thm:Fr_loc_equiv} we have for $n >> r$ an exact faithful functor
\[ F_n: \Ver_{p^{n}}^r\cong  \mathcal{C}^r \hookrightarrow \mathcal{C}\] 
which maps $L_{1}^{[n]}$ to $\bar{L}_1$. Since the monoidal structure on $\mathcal{C}$ is given by the bifunctors $- \otimes -: \mathcal{C}^r \times \mathcal{C}^{r'}\to\mathcal{C}^{r+r'}$, $(L_1^{[n]})^{\otimes 2}$ is sent to $\bar{L}_1^{\otimes 2}$. {By definition of $\wedge^2(\bar{L}_1)$ we have in $\mathcal{C}$ 
\[ \wedge^2(\bar{L}_1) = Im(\gamma_{\bar{L}_1\bar{L}_1} - 1: \bar{L}_1^{\otimes 2} \to \bar{L}_1^{\otimes 2}).\] 

Since $\bar{L}_1 \cong F_n(L_1^{[n]})$ and $\bar{L}_1^{\otimes 2} \cong F_n((L_1^{[n]})^{\otimes 2})$ we get
\[ \gamma_{\bar{L}_1\bar{L}_1} = \gamma_{F_n(L_1^{[n]}) F_n(L_1^{[n]})} = F_n (\gamma_{L_1^{[n]} L_1^{[n]}}),\]
the latter by construction of the symmetric monoidal structure on $\mathcal{C}$. This shows that
\[ F_n(\wedge^2(L_1^{[n]})) \cong \wedge^2(F_n(L_1^{[n]})) \cong \wedge^2(\bar{L}_1)\]
and therefore that $\wedge^2(L_1^{[n]})\cong \unit$ implies $\wedge^2(\bar{L}_1) \cong \unit$. In a similar way one can show $\wedge^3(\bar{L}_1) \cong 0$ and $\dim(\bar{L}_1) = 2$.} 
 
Therefore, by \cite[Proposition 3.4]{BE}, we have an additive symmetric monoidal functor $I':Tilt(SL_2) \to \mathcal{C}$ with $I'(V) \cong \bar{L}_1$. To see that it is fully faithful, recall that it is enough to check this on tensor powers of $V$ (these generate $Tilt(SL_2)$ under taking finite direct sums and direct summands). 
 
 For any $r\geq 1$, denote by $Tilt^r(SL_2)$ the full subcategory generated by objects $V^{\otimes k}$, $k\leq r$, under taking finite direct sums and direct summands.
 
 Then the essential image of $Tilt^r(SL_2)$ under $I'$ clearly lies in $\mathcal{C}^r$, and the functor $I': Tilt^r(SL_2) \longrightarrow \mathcal{C}^r$ is the restriction of the additive symmetric monoidal functor $$F_n: Tilt(SL_2) \to \Ver_{p^{n}},\;\; V \mapsto L_1^{[n]}, \;\; n>>r,$$ 
 from the definition of $\Ver_{p^{n}}$ as the abelian envelope of $Tilt(SL_2)/\mathcal{I}_{n}$, see \cite{BEO}.
  In particular $F_n$ is full and its kernel is the ideal $\mathcal{I}_{n}$ from Section~\ref{SecC2n}. Since we are considering the limit $n \to \infty$, we conclude that the functor $I': Tilt^r(SL_2) \longrightarrow \mathcal{C}^r$ is fully faithful for every $r\geq 1$. The statement of the lemma now follows.
\end{proof}

The  case $p=2$ of the following theorem proves a conjecture of Benson and Etingof. 
\begin{theorem}\label{thm:BE_conj}
 The functor $I':Tilt(SL_2) \to \mathcal{C}$ factors through the functor $I:Tilt(SL_2) \to \Rep(SL_2)$ and induces an equivalence of tensor categories $$\Rep(SL_2) \to \mathcal{C}.$$
\end{theorem}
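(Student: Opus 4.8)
The plan is to build the functor $\Rep(SL_2)\to\mathcal{C}$ from the universal property of the abelian envelope, and then to prove it is an equivalence by showing that $\mathcal{C}$, together with the embedding $I'$ of Lemma~\ref{lem:tilt_ff}, is itself an abelian envelope of $Tilt(SL_2)$; uniqueness of the abelian envelope then forces an equivalence compatible with $I$.

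First I would produce the factorisation. By Theorem~\ref{thm:tilt_ab_env} the pair $(\Rep(SL_2),\,I\colon Tilt(SL_2)\to\Rep(SL_2))$ satisfies Condition~\ref{itms:req_subcat_D}, hence is the abelian envelope of $Tilt(SL_2)$. Since $\mathcal{C}$ is a $k$-linear tensor category in the sense of Section~\ref{DefTC} and $I'\colon Tilt(SL_2)\to\mathcal{C}$ is a fully faithful, hence faithful, symmetric monoidal $k$-linear functor (Lemma~\ref{lem:tilt_ff}), Definition~\ref{def:ab_env} applied with $\cA=\mathcal{C}$ yields an exact symmetric monoidal functor $\widetilde{I}\colon\Rep(SL_2)\to\mathcal{C}$, unique up to isomorphism, with $\widetilde{I}\circ I\cong I'$; this is the asserted factorisation of $I'$ through $I$. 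As an exact symmetric monoidal functor between tensor categories $\widetilde{I}$ is faithful \cite[Proposition~1.19]{Deligne-Milne}, and since $I'$ is full the ``full functors correspond to full functors'' clause of Theorem~\ref{thm:uni-2} shows that $\widetilde{I}$ is full, hence fully faithful. (This step is not logically needed: full faithfulness will also drop out of the uniqueness argument below.)

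Next I would verify that $(\mathcal{C},I')$ itself satisfies Condition~\ref{itms:req_subcat_D}. Part~\eqref{itms:req_subcat_D1} is Lemma~\ref{lem:tilt_ff}. For Parts~\eqref{itms:req_subcat_D2} and~\eqref{itms:req_subcat_D3} I would apply Proposition~\ref{prop:equiv_abelian}: its hypothesis $\Ext^1_{\mathcal{C}}(I'T_1,I'T_2)=0$ for $T_1,T_2\in Tilt(SL_2)$ follows from the analogous vanishing $\Ext^1_{\Ver_{p^n}}(F_nT_1,F_nT_2)=0$ (the input used in \cite{BEO} to recognise $\Ver_{p^n}$ as the abelian envelope of $\mathcal{T}_n$), together with the facts that $\Ver_{p^n}^r\hookrightarrow\Ver_{p^n}$ is topologising, that $\Ext^1_{\mathcal{C}}$ of objects of $\mathcal{C}^{r_0}$ equals $\varinjlim_{r\ge r_0}\Ext^1_{\mathcal{C}^r}$, and that $\mathcal{C}^r\simeq\Ver_{p^n}^r$ for $n\gg r$ (Theorem~\ref{thm:Fr_loc_equiv}). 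It then remains to check Condition~\ref{cond:new_req_simple}: a simple object of $\mathcal{C}$ is some $\bar{L}_a$, which lies in $\mathcal{C}^a\simeq\Ver_{p^n}^a$ and corresponds to $L_a^{[n]}$; since $\mathcal{T}_n\to\Ver_{p^n}$ satisfies Condition~\ref{itms:req_subcat_D} (see Section~\ref{SecC2n}), hence the weak ideal condition of Lemma~\ref{Lem:WIC}, and by the computation underlying the proof of Theorem~\ref{thm:tilt_ab_env} carried out inside $\Ver_{p^n}$, there are a tilting module $S$ --- a suitable Steinberg module $St_s$ with $s$ depending on $a$ but not on $n$ --- and a tilting module $S'$ with $L_a^{[n]}\otimes F_n(S)\cong F_n(S')$, the left-hand side lying in $\Ver_{p^n}^m$ for some $m$ independent of $n$. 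Descending this isomorphism along $\mathcal{C}^m\simeq\Ver_{p^n}^m$ for $n\gg m$, using that the Frobenius functors realising these equivalences are locally monoidal, gives $\bar{L}_a\otimes I'(S)\cong I'(S')\in I'(Tilt(SL_2))$, which is Condition~\ref{cond:new_req_simple}. By Theorem~\ref{thm:uni-2} it follows that $(\mathcal{C},I')$ is an abelian envelope of $Tilt(SL_2)$.

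Granting this, uniqueness of the abelian envelope (noted after Definition~\ref{def:ab_env}), applied to $(\Rep(SL_2),I)$ and $(\mathcal{C},I')$, produces an equivalence compatible with the two embeddings of $Tilt(SL_2)$; by the uniqueness clause in the universal property this equivalence is $\widetilde{I}$, so $\widetilde{I}\colon\Rep(SL_2)\xrightarrow{\sim}\mathcal{C}$ is an equivalence of symmetric tensor categories, sending $V$ to $\widetilde{I}(I(V))\cong I'(V)\cong\bar{L}_1$. I expect the main obstacle to be the verification of Condition~\ref{cond:new_req_simple} for $I'$: producing the tilting module $S$ \emph{uniformly in $n$}, controlling the level of $L_a^{[n]}\otimes F_n(S)$, and then descending the resulting isomorphism through the Frobenius tower --- keeping the non-$k$-linear embeddings $\Ver_{p^{n-1}}\hookrightarrow\Ver_{p^n}$ apart from the $k$-linear equivalences $\mathcal{C}^r\simeq\Ver_{p^n}^r$, and matching the monoidal structures across the Frobenius twists --- so that the data manufactured inside $\Ver_{p^n}$ genuinely lives in $\mathcal{C}$. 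An alternative to the uniqueness argument would be to combine the full faithfulness of $\widetilde{I}$ with Proposition~\ref{prop:func_ess_surj}, applied to $\widetilde{I}$ with the generating subcategory of finite direct sums of the $\bar{L}_1^{\otimes j}=\widetilde{I}(V^{\otimes j})$; that route replaces the transport step by the verification, via Corollary~\ref{Lem2}, that $\widetilde{I}$ sends simple objects to simple objects.
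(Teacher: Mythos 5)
Your main route is genuinely different from the paper's. After the common factorisation step (same as the paper's first paragraph: Theorems~\ref{thm:tilt_ab_env} and~\ref{thm:uni-2} give $\widetilde I = \Phi:\Rep(SL_2)\to\mathcal C$, full by the ``full$\mapsto$full'' clause), the paper does \emph{not} try to exhibit $(\mathcal C,I')$ as an abelian envelope. Instead it applies Proposition~\ref{prop:func_ess_surj} directly and reduces everything to showing $\Phi$ sends simple objects to simple objects, which it proves explicitly: first $\Phi(L_i)\cong\bar L_i$ for $0\le i\le p-1$ via the fusion rule $\bar L_1\otimes\bar L_i\cong\bar L_{i+1}\oplus\bar L_{i-1}$ (from \cite[Proposition~4.49]{BEO}), then $\Phi(L_{p^a i})\cong\bar L_{p^a i}$ using that tensor functors commute with $\Fr$, the computation $\Fr(\bar L_a)\cong\bar L_{pa}$ of Lemma~\ref{LemFrL}, and finally $\Phi(L_a)\cong\bar L_a$ for all $a$ by Steinberg's tensor product theorem together with \eqref{LabelSimp}.

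The step you yourself flag is a genuine gap, and it is not just a matter of Frobenius bookkeeping. To verify Condition~\ref{cond:new_req_simple} for $(\mathcal C,I')$ you propose to ``carry out inside $\Ver_{p^n}$'' the Steinberg-tensoring computation of Theorem~\ref{thm:tilt_ab_env}, producing $S=St_s$ with $L_a^{[n]}\otimes F_n(S)\cong F_n(S')$ uniformly in $n$. But that computation lives in $\Rep(SL_2)$ and concerns the simple $SL_2$-module $L_a$, whereas $L_a^{[n]}$ is the simple object of $\Ver_{p^n}$ defined by \eqref{LabelSimp}; the dictionary $\Phi(L_a)\cong\bar L_a$ identifying the two is precisely the content you are trying to prove. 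A priori a full, faithful, exact tensor functor need not send simple objects to simple objects (an object with endomorphism ring $k$ can fail to be simple), and the simple constituents of $\bar L_1^{\otimes a}$ need not be images under $\Phi$ of simple $SL_2$-modules, so one cannot transport the $\Rep(SL_2)$ computation into $\Ver_{p^n}$ without first establishing the identification. To make your envelope route work you would need an \emph{intrinsic}, $n$-uniform argument inside the $\Ver_{p^n}$'s (or a suitable citation to \cite{BEO}) showing that some fixed $\mathbb T^{[n]}_{p^s-1}$ absorbs $L_a^{[n]}$ into the tilting images --- and at that point you have done essentially the same work as the paper's matching of simples, more indirectly.

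One more small inaccuracy in your concluding remark: the ``alternative'' route you sketch \emph{is} the paper's route, but Corollary~\ref{Lem2} is not what drives it (it does not appear in the paper's proof of Theorem~\ref{thm:BE_conj} at all); the simple-matching there rests on the fusion rule of \cite[Proposition~4.49]{BEO}, commutation of $\Phi$ with $\Fr$, Lemma~\ref{LemFrL}, and Steinberg's tensor product theorem. Also, you write that the full-faithfulness of $\widetilde I$ ``is not logically needed,'' yet it is indispensable for that alternative route, since Proposition~\ref{prop:func_ess_surj} requires a full exact functor.
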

\begin{proof}
 By Theorems \ref{thm:tilt_ab_env} and \ref{thm:uni-2}, the functor $I'$ factors through the functor $I$. The induced functor $\Phi:\Rep(SL_2) \to \mathcal{C}$ is a faithful exact symmetric monoidal functor sending the standard $SL_2$-representation $V$ to $\bar{L}_1$. Since $I'$ is full, Theorem \ref{thm:uni-2} implies that $\Phi$ is full as well.
  
%Similarly to the proof of Proposition \ref{prop:Fr_full}, it is enough to check that for any $M \in \Rep(SL_2)$, $$\Phi: \Hom_{SL_2}(\unit, M) \longrightarrow \Hom_{\cC(\infty)}(\unit, \Phi M)$$ is an isomorphism.
 
%Now, by Theorem \ref{thm:tilt_ab_env}, Proposition \ref{prop:equiv_abelian}, there exists $T \in Tilt(SL_2)$ and an embedding $i:M \hookrightarrow T$. We then have the following commutative diagram with exact rows:
%$$\xymatrix{&0 \ar[r] \ar[d] &\Hom_{SL_2}(\unit, M)  \ar[r]^-{i} \ar[d]^{\Phi} &\Hom_{SL_2}(\unit, T)  \ar[r]  \ar[d]^{\Phi}  &\Hom_{SL_2}(\unit, T/M)  \ar[d]^{\Phi} \\ &0 \ar[r]  &\Hom_{\cC(\infty)}(\unit, \Phi M)  \ar[r]^-{i} &\Hom_{\cC(\infty)}(\unit, \Phi T)  \ar[r] &\Hom_{\cC(\infty)}(\unit, \Phi T / \Phi M)  }$$
 
%All the vertical arrows are monomorphisms. By Lemma \ref{lem:tilt_ff}, the vertical arrow $$\Phi: \Hom_{SL_2}(\unit, T) \longrightarrow \Hom_{\cC(\infty)}(\unit, \Phi T)$$ is an isomorphism. Hence by the five-lemma,  $$\Phi: \Hom_{SL_2}(\unit, M) \longrightarrow \Hom_{\cC(\infty)}(\unit, \Phi M)$$ is an epimorphism, and thus an isomorphism (as required).

It now suffices to prove that $\Phi$ is essentially surjective. By Proposition~\ref{prop:func_ess_surj} we only need to show that $\Phi$ sends simple objects to simple objects. 

% First, recall that the full subcategory of finite direct sums of tensor powers of $\bar{L}_1$ is a generating subgategory of $\mathcal{C}$. It clearly lies in the essential image. 

%\Thorsten{Steinberg's tensor product theorem \cite[Corollary 3.17]{Jantzen} states that $L_a\cong \otimes_{1}L_{a_i}^{(i)}$ for $a = \sum_i a_i p^i$. Since $\Phi$ is exact and monoidal, it is therefore enough to show that $\Phi(L_{a}^{(i)})$ is simple for $0 \leq a \leq p-1$.

%First assume $p=2$ and consider again the natural $SL_2$-module $V$. It is straightforward to see (and well-known) that the socle filtration of $\otimes^2V$ in $\Rep(SL_2)$ is given by $k, V^{(1)},k$. Taking the (classical) Frobenius twist of this shows that the socle filtration of $\otimes^2 V^{(j)}$ is $k, V^{(j+1)},k$, for all $j\in\mN$. On the other hand, we find that $\otimes^2 \bar{L}_1^{(j)}$ in $\mathcal{C}$ has socle filtration $\unit, \bar{X}_{j+1},\unit$. Since $\Phi$ is monoidal and exact, and $\Phi(V)= \bar{L}_1$, it follows iteratively that $\Phi(V^{(j)})\simeq \bar{L}_1^{(j)}$, for all $j\in\mN$. Lemma \ref{Lem:SimpInf} now shows that $\Phi$ sends simple objects to simple objects.

%Now assume $p \geq 3$. 

By \cite[Lemma 3.10]{BEO} $T_i$ is simple for $i \leq p-1$ and $L_1 \otimes L_i \cong L_{i+1} \oplus L_{i-1}$ for $i \leq p-2$. Similarly $\mT^{[n]}_{i} \otimes \mT^{[n]}_{1} \cong \mT^{[n]}_{i+1} \oplus \mT^{[n]}_{i-1}$ in $\Ver_{p^n}$ by \cite[Proposition 4.49]{BEO} for $i \leq p-2$. In particular this holds for $n >> r$ and therefore in $\mathcal{C}^r$ and then in $\mathcal{C}$ for the corresponding limit objects. We conclude $\bar{L}_i\otimes \bar{L}_1 \cong \bar{L}_{i+1} \oplus \bar{L}_{i-1}$ in $\mathcal{C}$ for $i \leq p-2$. Since $\Phi(L_1) \cong \bar{L}_1$ we can therefore iteratively conclude $\Phi(L_i) = \bar{L}_i$ for $0 \leq i \leq p-1$.

Since $\Phi$ is a tensor functor, it commutes with $\Fr$, see \cite[\S 5.1]{EO}. As explained in Section~\ref{ssec:Fr}, we have $\Fr^a(L_i)\cong L^{(a)}_i\cong L_{p^ai}$, for all positive integers $a$. On the other hand, by Lemma~\ref{LemFrL}, we have $\Fr^a(\bar{L}_i)\cong \bar{L}_{p^ai}$. By the above paragraph, we thus find $\Phi(L_{p^ai}) = \bar{L}_{p^ai}$ for all $a>0$ and $0\le i\le p-1$.

Steinberg's tensor product theorem \cite[Corollary 3.17]{Jantzen} states that $L_a\cong \otimes_{i}L_{a_i}^{(i)}$ for a $p$-adic expansion $a = \sum_i a_i p^i$. It then follows from \eqref{LabelSimp} that $\Phi(L_a)\cong \bar{L}_a$ for all $a\in\mN$ and the theorem is proved.
\end{proof}

%\begin{remark} By the proof, the $j$-th Frobenius twist $V^{(j)} = L_1^{(j)} = L_{2^j}$ is sent to $\bar{X}_j$ for $j\in\mN$. Steinberg's tensor product theorem \cite[Corollary 3.17]{Jantzen} states that
%$L_m\cong \otimes_{m_i=1}L_1^{(i)}$. Hence the equivalence $\Rep (SL_2) \to \cC(\infty)$ satisfies $L_m\mapsto \bar{X}(m)$, for all $m\in\mN$.

%Writing every highest weight $m$ in its 2-adic expansion, Steinberg's tensor product theorem \cite[Corollary 3.17]{Jantzen} tells us the image of any $L_m$, e.g. $L_3 \cong L_1 \otimes L_1^{(1)}$ is send to $\bar{X}_{\{0,1\}} = \bar{X}_0 \otimes \bar{X}_1$.
%\end{remark}

\section*{Acknowledgements}
The research of K. C. was supported by ARC grant DE170100623. The research of I. E. and T. H. was supported by the ISF grant 711/18. The research of T. H. was partially funded by the Deutsche Forschungsgemeinschaft (DFG, German Research
Foundation) under Germany's Excellence Strategy – EXC-2047/1 – 390685813. We thank the organisers of the conference ``Representations Theory in Venice'' (September 2019), where much of the work on this paper was done, for their hospitality. I. E. would like to thank N. Harman and V. Ostrik for helpful discussions. T. H. would like to thank W. Hardesty and P. Sobaje for background on tilting modules.

\end{document}